%% file: main.tex
\documentclass[9pt]{article}

\usepackage{hyperref}
\usepackage{url}
\usepackage{smile}
\usepackage{graphicx}
\usepackage{algorithm}
\usepackage{algorithmic}
\usepackage{epstopdf}
\usepackage[margin=1.0in]{geometry}
\usepackage[export]{adjustbox}
\usepackage{mathtools, cuted}
\usepackage{bbm}
\usepackage{wrapfig}
\usepackage{subcaption}
\usepackage{caption}
\usepackage{enumitem}
\usepackage{tabularx}
\usepackage{tcolorbox}
\usepackage[nocompress]{cite}

\numberwithin{equation}{section}

\hypersetup{
    colorlinks=true,
    linkcolor=blue,
    anchorcolor=blue, 
    citecolor=blue,
}

\allowdisplaybreaks

\renewcommand{\frac}[2]{\tfrac{#1}{#2}}

\title{Sample Complexity for Markov Decision Processes and Stochastic Optimal Control with Static Risk Measures}

\author{
Cristian Chávez\thanks{
Pontificia Universidad Católica de Chile. (E-mail: \url{cristian.chvez@uc.cl}).
The majority of this work was completed during Cristian’s visit to Texas A\&M University.
}
\and 
Yan Li\thanks{Department of Industrial and Systems Engineering,
Texas A\&M University. (E-mail: \url{yan.li@tamu.edu}).}
}

\date{\vspace{-0.3in}}

\begin{document}
{
\makeatletter
\addtocounter{footnote}{2} 
\renewcommand\thefootnote{\@fnsymbol\c@footnote}%
\makeatother
\maketitle
}

\begin{abstract}
We present an elementary state augmentation method for a class of static risk measure applied to the total cost for both Markov decision processes (MDPs) and stochastic optimal control (SOC), such that dynamic programming equations can be derived on the augmented space. 
Through this we discuss the sample complexities of these two problem classes. 
We demonstrate the application of the proposed approach by developing a general framework for studying risk-averse MDPs and SOCs with distributionally robust functional generated by $\phi$-divergences, and obtain new sample complexity results for commonly used divergence functions.
\end{abstract}

\input{intro}

\input{risk_finite_mdp}
\input{risk_finite_soc}
\input{applications}

\input{conclusion}

\bibliographystyle{plain}
\bibliography{reference}

\end{document}

%% file: intro.tex

\section{Introduction}

We consider the following risk-averse sequential decision-making problem
\begin{align}\label{general_finite_horizon}
    \min_{\pi \in \Pi} \cR^\pi(s) \coloneqq
    \cR_{\Theta, f} \rbr{ \tsum_{t=0}^T c_t(S_t, A_t)}.
\end{align}
Here the risk functional $ \cR_{\Theta, f} $ is defined through 
\begin{align}\label{eq_risk_functional}
\textstyle
   \cR_{\Theta, f} (X)  = \min_{\theta \in \Theta} \EE \sbr{
   f_\theta(X) 
   },
\end{align}
for some parameterized nonlinear function $f_\theta: \RR \to \RR$ with parameter space $\Theta \subseteq \RR^{d_\Theta}$.
At every stage $t$, the decision maker takes an action $A_t$ from the action space $\cA_t$ based on the current state $S_t \in \cS_t$,
which subsequently incurs a cost of $c_t(S_t, A_t) \in [0,1]$. 
Depending on how the distribution of $S_{t+1}$ is modeled given $S_t$ and $A_t$, 
we will consider problem \eqref{general_finite_horizon} 
in the context of Markov decision process (MDP) and stochastic optimal control (SOC).
Unless stated otherwise, we consider the set of randomized history-dependent policy class $\Pi$, i.e., $\pi \in \Pi: \cH_t \to \Delta_{\cA_t}$, where $\Delta_{\cA_t}$ denotes the probability distributions supported over the action space,
and $H_t \in \cH_t$ denotes the history up to stage $t$.
The above expectation is then taken with respect to the probability law of the data process $\cbr{(S_t, A_t)}_{t \geq 0}$ where $S_0 = s$, $A_t \sim \pi(\cdot|H_t)$ under policy $\pi$.

Below we discuss some examples of risk measures that can be represented in the form of \eqref{eq_risk_functional}.

\begin{example}[Conditional value-at-risk \cite{rockafellar2000optimization}]\label{example_cvar}
    Let $X$ be a random variable on $(\Omega, \cF, P)$. Its conditional value-at-risk with risk level $\alpha \in (0,1)$ is given by 
    \begin{align*}
     \cR(X) = \min_{\theta}  \cbr{ \theta + \frac{1}{\alpha} \EE_P \sbr{X - \theta}_+ }.
    \end{align*}
    In this case we can take
$
        f_\theta(z) = \theta + \frac{1}{\alpha} [z - \theta]_+,
$
    and $\Theta \subseteq \RR$ in \eqref{eq_risk_functional}.
\end{example}

The conditional value-at-risk can be viewed as a special case of the distributionally robust functional generated by the following  $\phi$-divergence-based ambiguity sets \cite{shapiro2017distributionally}.

\begin{example}[$\phi$-divergence distributionally robust functional]\label{example_phi_divergence_dro}
Let $\phi: \RR \to \RR_+ \cup \cbr{+\infty}$ be a convex lower-semicontinuous function with $\phi(1)= 0$ and $\phi(x) = +\infty$ for $x < 0$.
Let $X$ be a random variable on $(\Omega, \cF, P)$.
For $\tau\geq 0$, consider the set of density functions 
\begin{align*}
    \mathfrak{M}_\tau = \cbr{
    \zeta \in \mathfrak{D}: \int_{\Omega} \phi(\zeta(w)) d P(w) \leq \tau
    },
\end{align*}
where $\mathfrak{D}$ denotes the set of probability density functions with respect to $P$.
We refer to $\mathfrak{M}_\tau$ as the ambiguity set. 
The distributionally robust functional $\cR(X)$ is then defined  as
\begin{align}\label{dro_phi}
    \cR(X) = \sup_{\zeta \in \mathfrak{D}} \int_{\Omega} X(\omega) \zeta(\omega) d P(\omega) , ~ \mathrm{s.t.}~ 
    \zeta \in \mathfrak{M}_\tau.
\end{align}
That is, we measure the risk of $X$ by taking its worst-case expectation, when the distribution defining the expectation is allowed to have an arbitrarily chosen density function inside the ambiguity set $\mathfrak{M}_\tau$.
From \cite{shapiro2017distributionally}, \eqref{dro_phi} admits the following dual problem 
\begin{align}\label{eq_phi_dro_f_form}
    \cR(X) = \min_{\lambda \geq 0, \mu \in \RR}
    \cbr{ \lambda \tau + \mu + 
    \EE_P \sbr{ (\lambda \phi)^* (X - \mu) } },
\end{align}
where $\lambda, \mu$ correspond to dual variables associated with constraints in \eqref{dro_phi},
and $(\lambda \phi)^*$ denotes the conjugate of $\lambda \phi$.
In this case, one can take $\theta = (\lambda, \mu)$,  
\begin{align*}
    f_{(\lambda, \mu)}(z) = \lambda \tau + \mu + (\lambda \phi)^* (z - \mu),
\end{align*}
and accordingly take $\Theta = \cbr{(\lambda, \mu): \lambda \geq 0, \mu \in \RR}$. 
\end{example}


We refer to risk measures of the form \eqref{general_finite_horizon} as the static risk measures, since the decision maker considers the risk applied to the total cost associated with the policy. 
In contrast to the risk-neutral setting where the risk corresponds to the expectation, for the static risk measures, a central challenge is the lack of dynamic programming (DP) equations except for certain special cases \cite{fei2020risk}.
To address this challenge, DP equations defined with the assistance of augmented state variables have been introduced for risk-averse MDPs \cite{bauerle2011markov,chow2015risk,hau2023dynamic,bauerle2014more}. 
In particular, for conditional value-at-risk (Example \ref{example_cvar}), \cite{bauerle2011markov} proposes DP equations based on its variational representation and subsequently defines the augmented state variable as the accumulated total cost,
and \cite{chow2015risk} proposes DP equations based on the dual representation and defines the augmented state variable as the risk level.\footnote{It has been recently pointed out in \cite{hau2023dynamic} that the DP equations in \cite{chow2015risk} only hold for policy evaluation.} 
The approach in \cite{bauerle2011markov} has since been generalized to general nonlinear utility functions \cite{bauerle2014more,wu2023risk}, optimized certainty equivalent \cite{wang2024reductions}, and a class of spectral risk measures \cite{bastani2022regret}.
With DP equations in place, there has been an active line of research in developing computational methods for risk-averse MDPs \cite{chow2015risk,ding2022sequential,chow2014algorithms}, and in determining the sample complexities of finding an approximate optimal control policy when the underlying model is unknown and has to be learned from the data \cite{wang2023near,wu2023risk,wang2024reductions,ni2024risk,bastani2022regret}. 

It should also be noted that a related line of research considers nested risk measures, where risk is applied iteratively at every stage of the decision process, and hence DP equations can be naturally derived. 
We refer to \cite{ruszczynski2010risk,li2025rectangularity} for relevant  discussions on MDPs and to \cite{shapiro2025risk} for SOCs.
The nested risk measure for MDPs is also closely related to robust MDPs with $(\mathrm{s,a})$-rectangular ambiguity sets \cite{iyengar2005robust,nilim2005robust} through the dual representation of coherent risk measures \cite{shapiro2021lectures}.
Correspondingly, computational methods and sample complexities have been studied for a variety of ambiguity sets \cite{panaganti2022sample,wang2023finite,wang2022policy,wang2023policy,li2022first}.

 \vspace{0.05in}

{\bf Contributions.}
In this manuscript, we study the DP equations and the sample complexities associated with static risk-averse MDPs and SOCs \eqref{general_finite_horizon}, when the risk can be represented in the form of \eqref{eq_risk_functional}.
Our contributions can be summarized as follows.

First, we propose an elementary construction of DP equations for \eqref{general_finite_horizon}, based on augmenting the state variable by the accumulated total cost. The proposed DP equations can be applied to both the MDP and SOC models without essential changes.
In contrast to \cite{bauerle2011markov}, the underlying idea behind our construction is that the nonlinearity of $f_\theta$ in \eqref{eq_risk_functional} can be handled through simple telescoping operations, which produces dense cost functions.
Through this, one can, in a straightforward manner, define an MDP (resp. SOC) over the augmented state space with a modified cost function. 
 A similar cost function has also been recently studied in the context of conditional value-at-risk \cite{muni2026reward} for infinite-horizon MDPs.
 Indeed, our approach can also be naturally extended to the infinite-horizon setting, while removing the need to restrict to a specialized function class in \cite{bauerle2014more} to ensure the existence of solutions for the DP equations.

Second, using the proposed DP equations, we study  the sample complexities for \eqref{general_finite_horizon} for both the MDP and  the SOC models. 
For risk-averse policy evaluation, where one aims to evaluate the risk $\cR^\pi$ defined in \eqref{general_finite_horizon} for a given policy, we establish a sample complexity of 
 $\tilde{\cO}(\abs{\cS}\abs{\cA} T^3 L_T^2/\epsilon^2)$ (resp. $\tilde{\cO}(T^4 L_T^2/\epsilon^2)$) for MDPs (resp. SOCs),
 where $T$ denotes the number of stages, and $L_T$ is the Lipschitz constant for the nonlinear function $f_\theta$ in \eqref{eq_risk_functional} that potentially  depends on $T$ (see Assumption \ref{assump_basic}).
 For risk-averse policy optimization \eqref{general_finite_horizon}, we establish a sample complexity of 
 $\tilde{\cO}(\abs{\cS}\abs{\cA} T^4 L_T^2/\epsilon^2)$ (resp. $\tilde{\cO}(T^5 L_T^2/\epsilon^2)$) for MDPs (resp. SOCs).
 In the context of risk-neutral MDPs, the obtained results recover the optimal sample complexity for finite-horizon time-inhomogeneous MDPs \cite{li2020breaking}.
 In addition, in the case of $\Theta$ being a singleton, the static risk functional in \eqref{eq_risk_functional} we consider reduces to the nonlinear utility function studied in \cite{bauerle2014more,wu2023risk}, and in this case improves upon the results for risk-averse MDPs in \cite{wu2023risk} by a factor of $\cO(T^3)$.
 The obtained results also appear to be the first sample complexity for risk-averse SOCs with static risk measures.

 Third, we demonstrate the application of the aforementioned approach to study the sample complexity of risk-averse MDPs and SOCs, when the risk $\cR^\pi$ corresponds to the distributionally robust functional generated by $\phi$-divergences. 
In particular, we establish results applicable to general $\phi$-divergences whose tail growth is superlinear, and demonstrate that \eqref{general_finite_horizon} becomes statistically intractable in the absence of the superlinear growth. 
It should be noted that in view of \eqref{eq_phi_dro_f_form} in Example \ref{example_phi_divergence_dro}, the function $f_\theta$ corresponding to $\phi$-divergences typically lacks the Lipschitz property (i.e., $L_T =\infty$), leading to a vacuous sample complexity bound under a naive application of the results  developed in the general context. 
  We develop novel techniques that resolve this issue via domain restriction,  and subsequently provide explicit characterizations of the impact of divergence function on the sample complexity. 
Interestingly, we show that for a general $\phi$-divergence, the sample complexity depends on $T$ and $\epsilon$ via the growth function associated with $\phi$,
and may not necessarily scale as $\cO(T^3/\epsilon^2)$ (resp. $\cO(T^4/\epsilon^2)$) as in the risk-neutral setting even for policy evaluation.
We then instantiate the obtained sample complexity for various concrete $\phi$-divergences, including conditional value-at-risk. 
In addition, we establish for the first time the sample complexity of risk-averse MDPs and SOCs associated with 
Kullback-Leibler divergence, $\chi^2$-divergence, and the general Cressie-Read class of divergences. 
Collectively, the obtained results demonstrate that the sample complexity for risk-averse policy optimization may differ from that of the risk-neutral setting, depending on the properties of the considered risk functional.

 \vspace{0.05in}

{\bf Organization.}
The rest of the manuscript is organized as follows. 
 Section \ref{sec_risk_mdp} discusses the DP equations for MDPs and subsequently develops sample complexity results for general risk-averse MDPs. 
Section \ref{sec_soc} then establishes the counterparts of these findings for the SOC model. 
Finally, we develop in Section \ref{sec_application} the concrete applications of Sections \ref{sec_risk_mdp} and \ref{sec_soc} in the context of distributionally robust risk functionals associated with $\phi$-divergences, 
and report new sample complexity results for commonly used $\phi$-divergences.

 \vspace{0.05in}

{\bf Notations.}
Unless stated otherwise, we reserve $\norm{\cdot}$ for the Euclidean distance. 
For any set $A$, we use $\delta_{A}$ to denote its indicator function, i.e., $\delta_{A}(x) = 0$ for $x \in A$ and $+\infty$ otherwise. 

%% file: risk_finite_mdp.tex


\section{Risk-averse Markov Decision Processes}\label{sec_risk_mdp}
In this section, we will proceed to discuss risk-averse MDP with finite-horizon setting.
For any MDP instance $\cM$, we use $\Pi(\cM)$ to denote the set of randomized history-dependent policies,
and $\Pi_{\mathrm{MR}}(\cM)$ for the set of randomized Markovian policies.

\subsection{Finite-horizon Risk-averse Markov Decision Processes}\label{sec_finite_mdp}
Consider a finite-horizon MDP 
$\cM = (
\cbr{\cS_t}_{t=0}^T, \cbr{\cA_t}_{t=0}^T, \cbr{\PP_t}_{t=0}^{T-1},
\cbr{c_t}_{t=0}^T
)
$.
Here $\cS_t$ denotes the finite state space at stage $t$,  $\cA_t$ denotes its corresponding finite action space,
 $c_t: \cS_t \times \cA_t \to [0,1]$ 
denotes the cost function, and
$\PP_t: \cS_t \times \cA_t \to \Delta_{\cS_{t+1}}$
denotes the transition kernel that maps a given state-action pair onto the set of  probability distributions $\Delta_{\cS_{t+1}}$ supported over $\cS_{t+1}$.
For notational convenience, we will occasionally write $\PP$ as shorthand for $\cbr{\PP_t}_{t=0}^{T-1}$.
We seek to find the optimal policy~of the following risk,
\begin{align}\label{eq_risk_finite_horizon_mdp}
    \min_{\pi \in \Pi(\cM)} \cR^\pi(s) \coloneqq
    \cR_{\Theta, f} \rbr{ \tsum_{t=0}^T c_t(S_t, A_t)}
\end{align}
where
the data process $\cbr{(S_t, A_t)}_{t=0}^T$ is generated by $S_0 = s$, $A_t \sim \pi(\cdot|H_t)$,  $S_{t+1} \sim \PP_t(\cdot|S_t, A_t)$,
and $H_t = (S_0, A_0, \ldots, S_t)$ denotes the history up to stage $t$.
In view of the definition of $\cR_{\Theta, f}$, we have 
\begin{align}\label{value_given_theta_finite_horizon_mdp}
    \cR^\pi (s) = \min_{\theta \in \Theta} \cbr{ V_0^{\pi, \theta}(s)
    \coloneqq \EE^\pi_\PP \sbr{f_\theta \rbr{\tsum_{t=0}^T c_t(S_t, A_t)} | S_0 = s} }.
\end{align}
It is then clear that  \eqref{eq_risk_finite_horizon_mdp} is equivalent to 
\begin{align}\label{finite_horizon_minmin_mdp}
    \min_{\theta \in \Theta} \min_{\pi \in \Pi(\cM)} V_0^{\pi, \theta}(s).
\end{align}

A potential challenge in solving \eqref{eq_risk_finite_horizon_mdp} is the nonlinearity of $f_\theta$ in \eqref{value_given_theta_finite_horizon_mdp}, which breaks the dynamic decomposition of \eqref{value_given_theta_finite_horizon_mdp} that underlies the dynamic programming equations associated with risk-neutral MDPs.
Our first observation in this section is the following  simple reformulation that allows us to define the dynamic equations over an augmented state space.

\begin{definition}[Finite-horizon augmented MDP]\label{def_aug_fin_mdpite_horizon}
    For any $\theta \in \Theta$, the augmented MDP $\tilde{\cM}_\theta$ is defined as follows. 
    The state space is $\tilde{\cS}_t = \cS_t \times \cX_t$, where $\cX_t = [0,t]$, and the action space is $\tilde{\cA}_t = \cA_t$.
    The cost function and the transition kernel are defined by 
\begin{align}\label{cost_transition_aug_fin_mdpite}
\tilde{c}_t^\theta(s,x,a)
&= f_\theta ({x+c_t(s,a)})- f_\theta(x), \nonumber \\
\tilde{\PP}_t(s',x' |  s,x,a)
&= \PP_t(s' |  s,a) \mathbbm{1}_{\cbr{x'=x+c_t(s,a)}}.
\end{align}
\end{definition}

In view of \eqref{cost_transition_aug_fin_mdpite} in Definition \ref{def_aug_fin_mdpite_horizon}, the augmented MDP $\tilde{M}_\theta$ tracks the accumulated total cost through the augmented state variable $X_t \in \cX_t$ at every stage $t$.
Given this, it is not difficult to see that the  set of history-dependent, randomized policies for $\tilde{\cM}_\theta$ is also $\Pi(\cM)$.
Then for any policy $\pi \in \Pi(\cM)$ and any $\theta \in \Theta$, we define the value function  as
\begin{align}\label{eq_value_given_pi_aug_mdp}
    \tilde{V}^{\pi, \theta}_0(s, x) = 
    \EE^{\pi}_{\tilde{\PP}} \sbr{
    \tsum_{t=0}^T \tilde{c}_t^\theta(S_t, X_t, A_t)
    | S_0 = s, X_0 = x
    }.
\end{align}

With Definition \ref{def_aug_fin_mdpite_horizon} in place, we make the following  observation. 

\begin{proposition}\label{prop_reform_fin_mdpite}
    We have for any $\pi \in \Pi(\cM)$,
    \begin{align*}
         V^{\pi,\theta}_0(s) =  \tilde{V}^{\pi, \theta}_0(s, 0) + f_\theta(0), ~ \forall s \in \cS_0.
    \end{align*}
\end{proposition}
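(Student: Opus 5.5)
The plan is to couple the two processes on a common probability space and then use a telescoping sum. Fix $\theta \in \Theta$, $\pi \in \Pi(\cM)$ and $s \in \cS_0$. Generate the augmented MDP $\tilde{\cM}_\theta$ from $(S_0,X_0)=(s,0)$ under $\pi$. By the product form of $\tilde{\PP}_t$ in \eqref{cost_transition_aug_fin_mdpite}, the $x$-component evolves deterministically given the $(s,a)$-path, namely $X_{t+1} = X_t + c_t(S_t,A_t)$. Hence $X_t = \tsum_{k=0}^{t-1} c_k(S_k,A_k)$ almost surely, and $X_t \in [0,t] = \cX_t$ because $c_k \in [0,1]$.

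Next I would check that the joint law of $\cbr{(S_t,A_t)}_{t=0}^T$ under $\tilde{\PP}$ coincides with its law under $\PP$ in the original MDP $\cM$. I would argue this by induction on $t$.
\begin{itemize}
\item The augmented history $\tilde H_t$ is a deterministic function of $H_t$, since each $X_k$ is determined by $H_k$ and the actions. So the action distribution $\pi(\cdot|H_t)$ is the same in both models.
\item The $s'$-marginal of $\tilde{\PP}_t(\cdot|s,x,a)$ is $\PP_t(\cdot|s,a)$.
\end{itemize}
Together these give equal finite-dimensional distributions of the $(S_t,A_t)$-process. This step needs some care in how policies on $\tilde{\cM}_\theta$ are identified with $\Pi(\cM)$, and I expect it to be the main point requiring justification, though it is routine.

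Finally, telescope along each sample path, setting $X_{T+1} \coloneqq X_T + c_T(S_T,A_T)$:
\begin{align*}
\tsum_{t=0}^T \tilde c_t^\theta(S_t,X_t,A_t) = \tsum_{t=0}^T \big( f_\theta(X_{t+1}) - f_\theta(X_t)\big) = f_\theta\big(\tsum_{t=0}^T c_t(S_t,A_t)\big) - f_\theta(0).
\end{align*}
Taking expectations, using the equality in law from the previous step together with \eqref{eq_value_given_pi_aug_mdp} and \eqref{value_given_theta_finite_horizon_mdp}, gives $\tilde V_0^{\pi,\theta}(s,0) = V_0^{\pi,\theta}(s) - f_\theta(0)$.

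The expectations are finite because there are finitely many trajectories and $f_\theta$ is real-valued. So no integrability issue arises and linearity of expectation applies directly.
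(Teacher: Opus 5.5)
Your proposal is correct and follows essentially the same route as the paper. The paper's proof is the same pathwise telescoping identity $\sum_{t=0}^T \tilde c_t^\theta(S_t,X_t,A_t) = f_\theta\bigl(\sum_{t=0}^T c_t(S_t,A_t)\bigr) - f_\theta(0)$, with $X_t = \sum_{k<t} c_k(S_k,A_k)$. It leaves implicit, under ``follows from the definition of $\tilde c_t^\theta$ and $\tilde{\PP}_t$,'' the steps you spell out: that the $(S_t,A_t)$-process has the same law in both models, and that the expectations are finite.
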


\begin{proof}
    It is immediate to verify that 
    for any policy $\pi \in \Pi(\cM)$,
    \begin{align*}
         \tilde{V}^{{\pi}, \theta}_0(s, 0) 
         & =
         \EE^{{\pi}}_{\tilde{\PP}} \sbr{
    \tsum_{t=0}^T \tilde{c}_t^\theta(S_t, X_t, A_t) | S_0 = s, X_0 = 0
    }  \\
    & = 
     \EE^{{\pi}}_{\tilde{\PP}} \sbr{
    \tsum_{t=0}^T 
    f_\theta ({X_t+c_t(S_t,A_t)})-f_\theta(X_t) | S_0 = s, X_0 = 0
    } \\
    & = \EE^{{\pi}}_{{\PP}}
    \sbr{f_\theta \rbr{\tsum_{t=0}^T c_t (S_t, A_t)} | S_0 = s}
    - f_\theta(0)
         = V^{\pi,\theta}_0(s) - f_\theta(0),
    \end{align*}
where the last equality follows from the definition of $\tilde{c}_t^\theta$ and $\tilde{\PP}_t$ in \eqref{cost_transition_aug_fin_mdpite}. This completes the proof.
\end{proof}

In view of Proposition \ref{prop_reform_fin_mdpite}, one can immediately obtain the following dynamic programming equations for the inner minimization of  \eqref{finite_horizon_minmin_mdp}  through the augmented MDP $\tilde{\cM}_\theta$.

\begin{proposition}\label{thrm_dp_fix_theta_finite}
    Define the optimal cost-to-go function in the augmented MDP $\tilde{\cM}_\theta$ as 
    \begin{align*}
        \tilde{V}_t^\theta (s, x) = 
        \min_{\pi \in \Pi(\cM)}
        \EE^{\pi}_{\tilde{\PP}} 
        \sbr{
        \tsum_{h=t}^T \tilde{c_h}^\theta(S_h, X_h, A_h) | S_t = s, X_t = x
        }.
    \end{align*}
    Then we have 
    $\tilde{V}_T^\theta(s,x) = \min_{a \in \cA_T} \tilde{c}_T^\theta(s, x, a)$ for $(s,x) \in \cS_T \times \cX_T$,  and 
    \begin{align}\label{dp_val_give_theta_finite}
        \tilde{V}^\theta_t(s, x) = \min_{a \in \cA_t}
        \tilde{c}_t^\theta(s, x, a) + 
        \tsum_{s' \in \cS_{t+1}} \PP_t(s'|s, a) \tilde{V}_{t+1}^\theta(s', x + c_t(s, a)),
        ~ \forall  (s, x) \in \cS_t \times \cX_t,
    \end{align}
    for any $0 \leq t \leq T-1$.
    In addition, the policy ${\pi^\theta}$ defined as 
    \begin{align}\label{opt_pi_given_theta_finite}
        \pi^\theta_t(s, x) \in \Argmin_{a \in \cA_t} \cbr{
        \tilde{c}_t^\theta(s, x, a) + 
        \tsum_{s' \in \cS_{t+1}} \PP_t(s'|s, a) \tilde{V}_{t+1}^\theta(s', x + c_t(s, a))
        }
    \end{align}
    is an optimal policy for $\min_{\pi \in \Pi(\cM)} V^{\pi, \theta}_0(s)$.
\end{proposition}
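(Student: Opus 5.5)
The plan is to observe that $\tilde{\cM}_\theta$ is an ordinary finite-horizon MDP, so the standard backward-induction theory applies to it, and then to transfer optimality back to the original problem through Proposition \ref{prop_reform_fin_mdpite}.

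The only non-standard feature is that $\cX_t=[0,t]$ is a continuum. We handle this first. Starting from $X_0=0$ (or from any fixed $x$ at stage $t$), the augmented coordinate evolves deterministically as $X_{h+1}=X_h+c_h(S_h,A_h)$. Since $\cS_h,\cA_h$ are finite, only finitely many values of $X_h$ are reachable. So for each fixed initial $(s,x)$ at stage $t$ the problem is an MDP on finite state spaces $\cS_h\times\cX_h^{\mathrm{reach}}$. The transition $\tilde{\PP}_h$ is a product of $\PP_h$ with a point mass, and the costs $\tilde c^\theta_h$ are bounded because $f_\theta$ is real-valued on a finite set of arguments. Measurability issues therefore disappear, and pointwise statements in $(s,x)$ suffice.

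Fix $(s,x)$ at stage $t$ and argue by backward induction on $t$.

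\textbf{Base case $t=T$.} The only cost is $\tilde c^\theta_T(S_T,X_T,A_T)$, with $(S_T,X_T)=(s,x)$ deterministic. Any randomized action yields an average of $\tilde c_T^\theta(s,x,\cdot)$, which is at least the minimum over the finite set $\cA_T$. A deterministic argmin attains it.

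\textbf{Inductive step.} For a history-dependent $\pi$, condition on the first action $A_t=a$ and the next state $S_{t+1}=s'$; the next augmented state $x+c_t(s,a)$ is then determined. The continuation policy, obtained by fixing the realized prefix of the history, is again a history-dependent policy starting at stage $t+1$. Its expected cost is therefore at least $\tilde V^\theta_{t+1}(s',x+c_t(s,a))$. This gives
\[
\text{value of }\pi\;\ge\;\tsum_a \pi(a|s,x)\Big[\tilde c_t^\theta(s,x,a)+\tsum_{s'}\PP_t(s'|s,a)\tilde V^\theta_{t+1}(s',x+c_t(s,a))\Big],
\]
and the right-hand side is at least the minimum over $a$. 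Conversely, playing an argmin action at stage $t$ and then following the policy that attains $\tilde V^\theta_{t+1}$, which is the Markov policy \eqref{opt_pi_given_theta_finite} by the induction hypothesis, achieves this minimum. This proves \eqref{dp_val_give_theta_finite}. It also shows that the Markov deterministic policy $\pi^\theta$ on the augmented space attains $\tilde V^\theta_t(s,x)$ for every $(s,x)$.

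\textbf{Optimality for the original problem.} The policy $\pi^\theta$ depends on $(S_t,X_t)$. Since $X_t=\sum_{h<t}c_h(S_h,A_h)$ is a function of $H_t$, $\pi^\theta$ is a legitimate element of $\Pi(\cM)$. Proposition \ref{prop_reform_fin_mdpite} gives $V^{\pi,\theta}_0(s)=\tilde V^{\pi,\theta}_0(s,0)+f_\theta(0)$ for every $\pi\in\Pi(\cM)$, with the constant $f_\theta(0)$ independent of $\pi$. Minimizing over $\pi$ on both sides shows that $\pi^\theta$ minimizes $V^{\pi,\theta}_0(s)$.

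The main obstacle is the inductive step. It requires carefully justifying that the continuation of a history-dependent policy after conditioning on $(H_t,A_t,S_{t+1})$ lies in the class over which $\tilde V^\theta_{t+1}$ is minimized, and that the same minimizer works uniformly across histories. I would handle it by defining $\tilde V^\theta_{t+1}$ as an infimum over policies that are allowed to depend on an arbitrary fixed prefix. Finiteness of the reachable sets makes this prefix-dependence harmless, and the infimum is attained by the Markov policy constructed backward.
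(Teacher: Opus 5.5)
Your proposal is correct and follows the paper's route. The paper applies standard finite-horizon dynamic programming to the augmented MDP $\tilde{\cM}_\theta$, then transfers optimality to $\min_{\pi} V^{\pi,\theta}_0(s)$ via Proposition \ref{prop_reform_fin_mdpite}, using that $f_\theta(0)$ does not depend on $\pi$. The paper does this in two lines; your backward induction, your finiteness argument for the reachable values of $X_h$, and your check that $\pi^\theta \in \Pi(\cM)$ (since $X_t$ is a function of $H_t$) spell out what it leaves implicit.
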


\begin{proof}
    The first part of the claim \eqref{dp_val_give_theta_finite} is a direct consequence of dynamic equations applied to the augmented MDP $\tilde{\cM}_\theta$. 
    The rest of the claim follows from Proposition \ref{prop_reform_fin_mdpite}.
\end{proof}

Given Proposition \ref{thrm_dp_fix_theta_finite}, we can obtain the dynamic equations for \eqref{eq_risk_finite_horizon_mdp} as follows.

\begin{theorem}\label{cor_dp_optimal_value_finite_mdp}
Let $\theta^*$ be an optimal solution of \eqref{finite_horizon_minmin_mdp}.
Consider $\{\tilde{V}^{\theta^*}_t\}_{t=0}^T$ defined as in \eqref{dp_val_give_theta_finite}
 and $\pi^{\theta^*}$ defined in \eqref{opt_pi_given_theta_finite}. 
 Then we have 
 \begin{align*}
     \min_{\pi \in \Pi} \cR^\pi(s) = \tilde{V}^{\theta^*}_0(s, 0) + f_{\theta^*}(0), 
 \end{align*}
 and $\pi^{\theta^*}$ is an optimal policy.
\end{theorem}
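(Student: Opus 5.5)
We combine the min--min reformulation \eqref{finite_horizon_minmin_mdp} with Propositions \ref{prop_reform_fin_mdpite} and \ref{thrm_dp_fix_theta_finite}. First, by the definition of $\cR_{\Theta,f}$ in \eqref{value_given_theta_finite_horizon_mdp}, for every $\pi \in \Pi$ we have $\cR^\pi(s) = \min_{\theta\in\Theta} V_0^{\pi,\theta}(s)$. Interchanging the two minimizations gives
\begin{align*}
\min_{\pi\in\Pi}\cR^\pi(s) = \min_{\theta\in\Theta}\min_{\pi\in\Pi} V_0^{\pi,\theta}(s).
\end{align*}
The interchange is legitimate because both sides equal the infimum of $V_0^{\pi,\theta}(s)$ over pairs $(\pi,\theta)\in\Pi\times\Theta$. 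Since $\theta^*$ attains the outer minimum by assumption, the right-hand side equals $\min_{\pi\in\Pi} V_0^{\pi,\theta^*}(s)$.

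Second, we evaluate this inner problem with the augmented MDP. Proposition \ref{prop_reform_fin_mdpite} gives $V_0^{\pi,\theta^*}(s) = \tilde V_0^{\pi,\theta^*}(s,0) + f_{\theta^*}(0)$ for every $\pi\in\Pi(\cM)$. Here $f_{\theta^*}(0)$ does not depend on $\pi$, so minimizing over $\pi$ yields $\tilde V_0^{\theta^*}(s,0)+f_{\theta^*}(0)$, where $\tilde V_0^{\theta^*}$ is the optimal cost-to-go defined in Proposition \ref{thrm_dp_fix_theta_finite}. That proposition identifies this function with the solution of the backward recursion \eqref{dp_val_give_theta_finite}. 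This establishes the value identity.

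Third, we show optimality of $\pi^{\theta^*}$. Proposition \ref{thrm_dp_fix_theta_finite} asserts that $\pi^{\theta^*}$ attains $\min_{\pi} V_0^{\pi,\theta^*}(s)$. Note that $\pi^{\theta^*}$ is Markov in the augmented state $(s,x)$, where $x$ is the accumulated cost. Since $x$ is a deterministic function of the history $H_t$, it is a history-dependent policy in $\Pi(\cM)$. We then chain
\begin{align*}
\cR^{\pi^{\theta^*}}(s) \le V_0^{\pi^{\theta^*},\theta^*}(s) = \min_{\pi} V_0^{\pi,\theta^*}(s) = \min_{\pi}\cR^\pi(s) \le \cR^{\pi^{\theta^*}}(s).
\end{align*}
The first inequality holds because $\cR^\pi(s)$ is a minimum over $\theta$, and the middle equality is the first step. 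Hence equality holds throughout, and $\pi^{\theta^*}$ is optimal.

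The main subtlety is the justification of the first step: attainment of the minima and the use of $\theta^*$. The exchange itself is elementary. The care needed is to state optimality of $\theta^*$ in the sense of \eqref{finite_horizon_minmin_mdp}, namely that $\theta^*$ minimizes $\theta\mapsto\min_\pi V_0^{\pi,\theta}(s)$, and to note that the inner minimum over $\pi$ is attained by Proposition \ref{thrm_dp_fix_theta_finite}. Everything else reduces to the two earlier propositions.
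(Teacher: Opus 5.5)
Your argument is correct and matches what the paper intends. The paper states this theorem without proof, as an immediate consequence of the min--min reformulation \eqref{finite_horizon_minmin_mdp} together with Propositions \ref{prop_reform_fin_mdpite} and \ref{thrm_dp_fix_theta_finite}; your sandwich chain $\cR^{\pi^{\theta^*}}(s) \le V_0^{\pi^{\theta^*},\theta^*}(s) = \min_{\pi} \cR^\pi(s) \le \cR^{\pi^{\theta^*}}(s)$, plus the remark that $\pi^{\theta^*} \in \Pi(\cM)$ because the accumulated cost is a function of the history, fills in the omitted details.
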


In view of Theorem \ref{cor_dp_optimal_value_finite_mdp}, the optimal policy $\pi^*$ for \eqref{eq_risk_finite_horizon_mdp} is non-randomized and  depends on the history only through the accumulated cost. 
We now turn our attention to evaluating $\cR^\pi(s)$ for any given policy $\pi$. Following a similar argument as before, we proceed to establish the dynamic programming equations 
of $\cR^\pi(s)$ for any $\pi \in \Pi_{\mathrm{MR}}(\tilde{\cM}_\theta)$.

\begin{theorem}\label{theorem_risk_dp_fix_policy}
    For any $\theta \in \Theta$ and $\pi \in \Pi_{\mathrm{MR}}(\tilde{\cM}_\theta)$, define the value function of $\pi$ in $\tilde{M}_\theta$ as 
    \begin{align*}
        \tilde{V}_t^{\pi , \theta} (s, x) = 
        \EE^{\pi}_{\tilde{\PP}} 
        \sbr{
        \tsum_{h=t}^T \tilde{c_h}^\theta(S_h, X_h, A_h) | S_t = s, X_t = x
        }.
    \end{align*}
    Then we have 
    $\tilde{V}^{\pi, \theta}_T(s,x) = \tsum_{a \in \cA_T} \pi_T(a|s,x) \tilde{c}_T^\theta(s,x,a)$ for any $(s, x) \in \cS_T \times \cX_T$, and 
    \begin{align}\label{dp_finite_mdp_fix_policy}
        \tilde{V}^{\pi, \theta}_t(s, x) = \EE_{a \sim \pi(\cdot|s, x)} \sbr{
        \tilde{c}_t^\theta(s, x, a) + 
        \tsum_{s' \in \cS_{t+1}} \PP_t(s'|s, a) \tilde{V}_{t+1}^{\pi, \theta}(s', x + c_t(s, a))
        }, 
    \end{align}
    for any $0 \leq t \leq T-1$ and $ (s,x) \in \cS_t \times \cX_t$.
    In addition,  
denote $\theta_\pi \in \Argmin_{\theta \in \Theta} \cbr{\tilde{V}^{\pi, \theta}_0(s, 0) + f_\theta(0)}$.
We have for any $\pi \in \Pi_{\mathrm{MR}}(\tilde{M}_\theta)$,
    \begin{align*}
        \cR^\pi(s) = \tilde{V}^{ \pi, \theta_\pi}_0(s, 0) + f_{\theta^\pi}(0).
    \end{align*}
\end{theorem}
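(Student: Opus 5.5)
The plan has two parts: the policy-evaluation recursion \eqref{dp_finite_mdp_fix_policy} for a fixed Markovian randomized policy on the augmented MDP $\tilde{\cM}_\theta$, and then the identification of $\cR^\pi(s)$ via Proposition \ref{prop_reform_fin_mdpite}.

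\textbf{Step 1: the recursion.} Fix $\theta$ and $\pi \in \Pi_{\mathrm{MR}}(\tilde{\cM}_\theta)$, so $\pi_t(\cdot|s,x)$ depends only on the current augmented state.
\begin{itemize}
\item \emph{Terminal stage.} By definition,
$\tilde{V}^{\pi,\theta}_T(s,x) = \EE^\pi[\tilde{c}^\theta_T(S_T,X_T,A_T)\,|\,S_T=s,X_T=x]$ with $A_T \sim \pi_T(\cdot|s,x)$. This gives the stated finite sum.
\item \emph{Stage $t<T$.} Split the sum as $\tilde{c}^\theta_t(S_t,X_t,A_t) + \tsum_{h=t+1}^T \tilde{c}^\theta_h$ and condition on $(A_t, S_{t+1}, X_{t+1})$ via the tower property.
\item \emph{Markov step.} Under $\tilde{\PP}_t$, the pair $(S_{t+1},X_{t+1})$ has law $\PP_t(\cdot|s,a)\otimes \delta_{x+c_t(s,a)}$. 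Because $\pi$ is Markovian in $(s,x)$, the future trajectory depends on the past only through $(S_{t+1},X_{t+1})$. So the conditional expectation of the tail sum equals $\tilde{V}^{\pi,\theta}_{t+1}(S_{t+1},X_{t+1})$.
\item \emph{Assembly.} Averaging over $a\sim\pi_t(\cdot|s,x)$ and $s'\sim \PP_t(\cdot|s,a)$ yields \eqref{dp_finite_mdp_fix_policy}.
\end{itemize}
One remark is needed: $\cX_t=[0,t]$ is a continuum. However, from any fixed $(s,x)$ only finitely many reachable states arise, since all spaces are finite. The conditional expectations are therefore finite sums and no measurability issues occur.

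\textbf{Step 2: the risk identity.} Proposition \ref{prop_reform_fin_mdpite} is stated for $\pi\in\Pi(\cM)$. A Markovian policy on the augmented space is a history-dependent policy for $\cM$, because $X_t=\tsum_{h<t}c_h(S_h,A_h)$ is a function of $H_t$. Hence
$V^{\pi,\theta}_0(s)=\tilde{V}^{\pi,\theta}_0(s,0)+f_\theta(0)$ holds for every $\theta$.

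By \eqref{value_given_theta_finite_horizon_mdp}, $\cR^\pi(s)=\min_{\theta\in\Theta}V^{\pi,\theta}_0(s)=\min_\theta\{\tilde{V}^{\pi,\theta}_0(s,0)+f_\theta(0)\}$. This minimum is attained at $\theta_\pi$ by definition, which gives the claim. The $\theta^\pi$ in the display is read as $\theta_\pi$.

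\textbf{Main obstacle.} The delicate point is justifying the Markov property used in Step 1, namely that the conditional law of the future given $(S_{t+1},X_{t+1})$ does not depend on the earlier history. This is where the restriction to $\Pi_{\mathrm{MR}}(\tilde{\cM}_\theta)$ is essential. I would handle it by writing the trajectory law explicitly as a product of kernels $\pi_h(a_h|s_h,x_h)\,\PP_h(s_{h+1}|s_h,a_h)$ and summing out the earlier variables.

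A secondary point is existence of the minimizer $\theta_\pi$. The statement presupposes it by writing $\theta_\pi\in\Argmin$, so no further argument is required.
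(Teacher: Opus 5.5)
Your proposal is correct and follows the paper's proof. The recursion is the standard policy-evaluation dynamic programming argument on the augmented MDP, and the risk identity follows from Proposition~\ref{prop_reform_fin_mdpite} together with the definitions of $\cR^\pi$ and $\theta_\pi$. Your extra remarks make explicit details the paper leaves implicit: a Markovian augmented policy is a history-dependent policy in $\Pi(\cM)$ because $X_t$ is a function of $H_t$, and only finitely many augmented states are reachable from a fixed $(s,x)$.
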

\begin{proof}
    The first part of the claim \eqref{dp_finite_mdp_fix_policy} follows from standard dynamic programming principles applied to the augmented MDP. 
    The rest of the claim follows from the definition of $\cR^\pi$, $\theta_\pi$ and Proposition \ref{prop_reform_fin_mdpite}.
\end{proof}


\subsection{Sample Complexity of Risk-averse Policy Evaluation and Optimization}\label{subsec_sample_finite_mdp}

We now turn our attention to \eqref{eq_risk_finite_horizon_mdp} when the underlying transition kernels $\cbr{\PP_t}$ are unknown. 
Instead, we assume sample access to ${\PP_t}$ such that for any $(s, a) \in \cS_t \times \cA_t$, one can generate $n$ i.i.d. samples from the distribution $\PP_t(\cdot|s, a)$.
To facilitate our discussion, let us denote by $\hat{\PP}_{t,n}$ the empirical estimate of $\PP_t$ constructed from such samples, 
 write $\hat{\PP}_n$ in short for $\{\hat{\PP}_{t,n}\}_{t=0}^{T-1}$,
and $\abs{\cS} = \max_{0 \leq t \leq T} \abs{\cS_t}$
together with $\abs{\cA} = \max_{0 \leq t \leq T} \abs{\cA_t}$.
We make the following assumption on the nonlinear function $f_\theta$ and the parameter space $\Theta$ throughout the rest of the manuscript. 

\begin{assumption}\label{assump_basic}
    We assume that $f_\theta$ is $(L_\Theta, L_C)$-Lipschitz, i.e.,
    \begin{align*}
        \abs{f_\theta(z) - f_\theta(z')}  \leq L_C \abs{z - z'}, ~~
        \abs{f_{\theta}(z) - f_{\theta'}(z)}  \leq L_\Theta \norm{\theta - \theta'}, 
        ~ \forall z, z' \in [0, T+1], ~~ \theta, \theta' \in \Theta.
    \end{align*}
    In addition, $\Theta$ is bounded in $\norm{\cdot}$ with radius $R_\Theta$.
\end{assumption}
It should be noted that our discussion allows $L_C$ and $L_\Theta$ to be potentially dependent on the number of stages $T$.
This will be of particular importance in Section \ref{sec_application} when we discuss general $\phi$-divergences.


\subsubsection{Risk-averse Policy Evaluation}\label{subsec_policy_eval_mdp}
For a given realization of $\hat{\PP}_n$ and any policy $\pi$, let us denote 
$\hat{\cR}^\pi_n$ as the risk  of policy $\pi$ defined as in \eqref{value_given_theta_finite_horizon_mdp}, with $\PP$ replaced by  $\hat{\PP}_n$ therein. 
For the rest of our discussion in this section, we are interested in evaluating the risk $\cR^\pi$  via the estimator $\hat{\cR}^\pi_n$.
That is, for  any target precision $\epsilon > 0$, we are interested in the number of samples $n$ necessary to ensure 
$\sup_{s \in \cS_0} |{ \hat{\cR}^\pi_n(s) - \cR^\pi(s)}| \leq \epsilon$ with high probability. 

Let us denote $\hat{V}_{t,n}^{\pi,\theta}$ as the value function of policy $\pi$ in the augmented MDP, defined in the same way as $\tilde{V}^{\pi, \theta}_t$ in \eqref{dp_finite_mdp_fix_policy}, with $\PP$ replaced by $\hat{\PP}_n$ therein.
We will restrict our focus in this section to a policy $\pi$ that is $L_\pi$-Lipschitz with respect to the augmented state variable, i.e., 
\begin{align}\label{eq_finite_mdp_policy_lip}
    \|\pi_t(\cdot|s,x) - \pi_t(\cdot|s,x')\|_1 \leq L_\pi\abs{x-x'}, ~ \forall s \in \cS_t, x,x' \in \cX_t,
\end{align}
for some $L_\pi > 0$.
We will discuss in Remark \ref{remark_lipschitz_policy} the potential relaxation of this restriction and the associated price in terms of the final sample complexity. 
As a direct consequence, we can establish the Lipschitz continuity of the value function for policy $\pi$ in the augmented MDP $\tilde{\cM}_\theta$.

\begin{lemma}\label{lemma_fin_mdp_pol_eval_val_lip}
    For any $0 \leq t \leq T$ and $s \in \cS_t$, the value functions $\tilde{V}_t^{\pi, \theta}(s, \cdot)$
    and $\hat{V}_{t,n}^{\pi, \theta}(s, \cdot)$ are Lipschitz with modulus   
    \begin{align*}
        L^\pi_{t} \coloneqq (T-t+1)L_C\left(2 + \tfrac{(T-t+2)L_\pi}{2}\right).
    \end{align*}
\end{lemma}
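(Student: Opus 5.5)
We argue by backward induction on $t$, using the policy-evaluation recursion \eqref{dp_finite_mdp_fix_policy} of Theorem \ref{theorem_risk_dp_fix_policy}. The argument is the same for $\tilde{V}^{\pi,\theta}_t$ and $\hat{V}^{\pi,\theta}_{t,n}$, since it never uses any property of the kernel beyond being a probability distribution. The claim is that $\tilde{V}^{\pi,\theta}_t(s,\cdot)$ is Lipschitz on $\cX_t$ with modulus $L_t$ satisfying
\begin{align*}
L_T \le 2L_C + L_\pi \cdot \max\abs{\tilde c_T^\theta}, \qquad L_t \le 2L_C + L_{t+1} + L_\pi \cdot \sup\abs{\text{(bracketed term in \eqref{dp_finite_mdp_fix_policy})}}.
\end{align*}
We then show this recursion is dominated by the closed form $L^\pi_t$.

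\textbf{Step 1: basic bounds.} By Assumption \ref{assump_basic}, for $x\in[0,t]$ and $c_t\in[0,1]$ all arguments lie in $[0,T+1]$. Hence $\abs{\tilde c^\theta_t(s,x,a)}\le L_C$, and
\begin{align*}
\abs{\tilde c^\theta_t(s,x,a)-\tilde c^\theta_t(s,x',a)} \le \abs{f_\theta(x+c)-f_\theta(x'+c)}+\abs{f_\theta(x)-f_\theta(x')} \le 2L_C\abs{x-x'}.
\end{align*}
Next we bound values. By telescoping, as in Proposition \ref{prop_reform_fin_mdpite}, $\tilde V^{\pi,\theta}_{t}(s,x)$ is the expectation of $f_\theta(x+\sum_{h\ge t}c_h)-f_\theta(x)$. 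The accumulated cost from stage $t$ is at most $T-t+1$, so $\abs{\tilde V^{\pi,\theta}_{t}}\le (T-t+1)L_C$. Consequently the bracketed quantity $Q_t(s,x,a)$ in \eqref{dp_finite_mdp_fix_policy} is itself a telescoped sum from $t$ and satisfies $\abs{Q_t}\le (T-t+1)L_C$.

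\textbf{Step 2: inductive step.} We write
\begin{align*}
\tilde V_t(s,x)-\tilde V_t(s,x') = \tsum_a \pi_t(a|s,x)\bigl(Q_t(s,x,a)-Q_t(s,x',a)\bigr) + \tsum_a \bigl(\pi_t(a|s,x)-\pi_t(a|s,x')\bigr) Q_t(s,x',a).
\end{align*}
The first sum is controlled by $(2L_C+L_{t+1})\abs{x-x'}$. This uses Step 1 together with the induction hypothesis applied at the shifted points $x+c_t(s,a)$ and $x'+c_t(s,a)$, whose difference is again $\abs{x-x'}$, and the fact that averaging against $\PP_t(\cdot|s,a)$ does not increase the Lipschitz constant. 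The second sum is controlled by \eqref{eq_finite_mdp_policy_lip} and Step 1 as $L_\pi (T-t+1)L_C\abs{x-x'}$. This yields
\begin{align*}
L_t \le L_{t+1} + 2L_C + (T-t+1)L_\pi L_C,
\end{align*}
with base case $L_T\le 2L_C+L_\pi L_C$.

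\textbf{Step 3: unrolling.} Summing the recursion gives
\begin{align*}
L_t \le \tsum_{h=t}^T\bigl(2L_C+(T-h+1)L_\pi L_C\bigr) = (T-t+1)L_C\Bigl(2+\tfrac{(T-t+2)L_\pi}{2}\Bigr),
\end{align*}
using $\sum_{k=1}^{m}k=m(m+1)/2$ with $m=T-t+1$. This is exactly $L^\pi_t$.

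The main subtlety is Step 1's bound $\abs{Q_t}\le (T-t+1)L_C$. A naive bound via $\abs{\tilde c}+\abs{\tilde V_{t+1}}$ gives the same value, but we must check that the domain of $f_\theta$ stays inside $[0,T+1]$ so that Assumption \ref{assump_basic} applies. We must also make sure that the $x$-dependence of both the immediate cost and the shifted argument of $\tilde V_{t+1}$ is counted only once, through the induction hypothesis.
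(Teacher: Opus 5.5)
Your proposal is correct and follows essentially the same route as the paper's proof. It uses backward induction and splits the difference into a policy-change term, bounded by $\|\pi_t(\cdot|s,x)-\pi_t(\cdot|s,x')\|_1 \cdot (T-t+1)L_C$, and a $Q$-difference term, bounded by $(2L_C + L^\pi_{t+1})|x-x'|$; unrolling the recursion $L_t = L_{t+1} + 2L_C + (T-t+1)L_C L_\pi$ from the base case $L_C(2+L_\pi)$ then gives exactly $L^\pi_t$.
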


\begin{proof} 
    We proceed with an inductive argument. For $t = T$, we have
    \begin{align*}
        & |{\tilde V_T^{\pi, \theta}(s,x) - \tilde V_T^{\pi, \theta}(s,y)} | \\
        \leq & |\EE_{a \sim \pi_T(\cdot|s,x)}[\tilde{c}_T^\theta(s,y,a)] - \EE_{a \sim \pi_T(\cdot|s,y)}[\tilde{c}_T^\theta(s,y,a)]| + |\EE_{a \sim \pi_T(\cdot|s,x)}[\tilde{c}_T^\theta(s,x,a) - \tilde{c}_T^\theta(s,y,a)]| \\
        \overset{(a)}{\leq} & \max_{a \in \cA_T} |\tilde{c}_T^\theta(s,y,a)| \cdot \|\pi_T(\cdot|s,x) - \pi_T(\cdot|s,y)\|_1 + 2L_C|x-y| \\
        \leq & L_C(2 + L_\pi)\abs{x-y},
    \end{align*}
    where $(a)$ follows from Hölder's inequality and Assumption \ref{assump_basic},
    which implies $\norm{\tilde{c}_t}_\infty \leq L_C$ for $0 \leq t \leq T$.
    In addition, it is also clear that 
    $\norm{\tilde{V}^{\pi, \theta}_t}_\infty \leq  (T-t+1) L_C$.
        Consequently, suppose the desired claim holds at $t+1$, then we have
    \begin{align*}
        & |\tilde V_t^{\pi, \theta}(s,x) - \tilde V_t^{\pi, \theta}(s,y)| \\
        \leq & | \EE_{a \sim \pi_t(\cdot|s,x)}[ \tilde{c}_t^\theta(s,y,a) + \EE_{s' \sim \PP_t(\cdot|s,a)}[ \tilde V_{t+1}^{\pi, \theta}(s', y + c_t(s,a))]] \\
        & ~ ~ ~ ~ - \EE_{a \sim \pi_t(\cdot|s,y)}[\tilde{c}_t^\theta(s,y,a) + \EE_{s' \sim \PP_t(\cdot|s,a)}[ \tilde V_{t+1}^{\pi, \theta}(s', y+ c_t(s,a))]]| \\
        & + |\EE_{a \sim \pi_t(\cdot|s,x)}[\tilde{c}_t^\theta(s,x,a) - \tilde{c}_t^{\theta} (s,y,a) + \EE_{s' \sim \PP_t(\cdot|s,a)}[\tilde V_{t+1}^{\pi, \theta}(s', x + c_t(s,a)) - \tilde V_{t+1}^{\pi, \theta}(s', y + c_t(s,a))]]| \\
         \leq &  
        (T-t+1) L_C L_\pi \abs{x-y}
        + 2L_C \abs{x-y} + L^\pi_{t+1} \abs{x-y},
    \end{align*}
    where the last inequality follows from Hölder's inequality, Assumption \ref{assump_basic}, and the inductive hypothesis. 
\end{proof}

To set up our ensuing discussion on the estimation error $|{ \hat{\cR}^\pi_n(s) - \cR^\pi(s)}|$, recall from Theorem \ref{theorem_risk_dp_fix_policy} that 
\begin{align*}
    \cR^\pi(s) = \min_{\theta \in \Theta} \tilde{V}^{\pi, \theta}_0(s,0) + f_\theta(0),  ~~~
        \hat{\cR}_n^\pi(s) = \min_{\theta \in \Theta} \hat{V}^{\pi, \theta}_{0,n}(s,0) + f_\theta(0). 
\end{align*}
Hence it suffices to control 
$(\hat{V}_{0,n}^{\pi, \theta} - \tilde{V}_0^{\pi, \theta})(s,0)$
uniformly over $\theta \in \Theta$.
To this end, we establish below an error decomposition that decomposes 
 $(\hat{V}_{t,n}^{\pi, \theta} - \tilde{V}_t^{\pi, \theta})(s,x)$ into a sum of martingale differences, and subsequently applies Freedman's inequality.

\begin{lemma}\label{lemma_err_bernstein_eval}
\label{lemma_fin_mdp_pol_eval_berns_sum}
Fix $0 \leq t \leq T-1$,  $s \in \cS_t$ and $ x \in \cX_t$. Then for any $\delta \in (0,1)$, with probability at least $1-\delta$, we have
    \begin{align}
    \abs{(\hat{V}_{t,n}^{\pi, \theta} - \tilde{V}_t^{\pi, \theta})(s,x)} \leq \sqrt{\tfrac{2 }{n}\hat{W}_{t,n}^{\pi, \theta}(s,x) \log(\tfrac{2}{\delta})} + \tfrac{2  T L_C}{3n} \log(\tfrac{2}{\delta}), \nonumber
    \end{align}
    where
    \begin{align} 
    \hat{W}_{t,n}^{\pi, \theta}(s,x) \coloneqq  \tsum_{h = t}^{T-1} \EE_{\tilde \PP}^\pi \left[ \left. \Var_{\tilde{\PP}_h^\pi (\cdot|S_{h}, X_{h})} (\hat{V}_{h+1,n}^{\pi, \theta}) \right| (S_t, X_t) = (s,x) \right], 
    \label{fin_mdp_pol_eval_err_var_sum_decom}
    \end{align}
and $    \tilde{\PP}_t^\pi(s', x'|s,x) = \EE_{a \sim \pi_t(\cdot|s,x)} \tilde{\PP}_t( s', x'  | s, x, a)$.
\end{lemma}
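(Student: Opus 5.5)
We will use the simulation-lemma type decomposition for policy evaluation in the augmented MDP. Both $\hat V^{\pi,\theta}_{\cdot,n}$ and $\tilde V^{\pi,\theta}_\cdot$ satisfy the Bellman recursion \eqref{dp_finite_mdp_fix_policy}, with kernels $\hat{\PP}_n$ and $\PP$ respectively, and the same policy $\pi$ and costs $\tilde c^\theta$. Subtracting the two recursions gives, for each $h$,
\begin{align*}
(\hat V^{\pi,\theta}_{h,n}-\tilde V^{\pi,\theta}_h)(s,x) = \big[(\hat{\PP}^\pi_{h,n}-\tilde{\PP}^\pi_h)\hat V^{\pi,\theta}_{h+1,n}\big](s,x) + \big[\tilde{\PP}^\pi_h(\hat V^{\pi,\theta}_{h+1,n}-\tilde V^{\pi,\theta}_{h+1})\big](s,x).
\end{align*}
Here $\hat{\PP}^\pi_{h,n}$ is the augmented kernel built from $\hat{\PP}_{h,n}$. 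Unrolling this from $t$ to $T-1$ (the terminal terms coincide at $h=T$) gives
\begin{align*}
(\hat V^{\pi,\theta}_{t,n}-\tilde V^{\pi,\theta}_t)(s,x)=\tsum_{h=t}^{T-1}\EE^\pi_{\tilde\PP}\big[(\hat{\PP}^\pi_{h,n}-\tilde{\PP}^\pi_h)\hat V^{\pi,\theta}_{h+1,n}(S_h,X_h)\,\big|\,(S_t,X_t)=(s,x)\big].
\end{align*}

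Next we write this as a sum of martingale differences. The samples at stage $h$ are independent of $\hat V_{h+1,n}$ only if we order the randomness backward, since $\hat V_{h+1,n}$ depends on the samples of stages $h+1,\dots,T-1$. So we use the filtration $\cF_h$ generated by samples of stages $h,\dots,T-1$, with $h$ running from $T-1$ down to $t$. Expanding each $\hat{\PP}_{h,n}(\cdot|s',a)=\frac1n\sum_{i}\delta_{\xi^{i}_{h,s',a}}$ writes the error as
\begin{align*}
\tsum_{h}\tsum_{(s',x',a)}\tsum_{i=1}^n \frac{w_h(s',x',a)}{n}\Big(\hat V_{h+1,n}(\xi^i_{h,s',a},x'+c_h(s',a)) - \EE_{\PP_h}\hat V_{h+1,n}\Big).
\end{align*}
The weights $w_h(s',x',a)=\Pr^\pi_{\tilde\PP}((S_h,X_h)=(s',x'),A_h=a\mid s,x)$ are deterministic, because the outer expectation is under the true kernel. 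Conditioned on $\cF_{h+1}$, each summand is a mean-zero increment bounded by $\|\hat V_{h+1,n}\|_\infty\le (T-h)L_C\le TL_C$ (see the proof of Lemma \ref{lemma_fin_mdp_pol_eval_val_lip}). Since $\sum w_h=1$, each increment $\frac{w}{n}(\cdot)$ is bounded by $TL_C/n$. The increments are ordered first by stage and then by sample index.

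The predictable quadratic variation is $\sum_h\sum w_h^2\frac1n\Var_{\PP_h(\cdot|s',a)}(\hat V_{h+1,n})$. Using $w_h^2\le w_h$ and Jensen over $a\sim\pi$, i.e.\ $\EE_a\Var\le\Var_{\tilde\PP^\pi_h}$ (a mixture of variances is at most the variance of the mixture), this is at most $\frac1n\hat W^{\pi,\theta}_{t,n}(s,x)$. Freedman's inequality then gives, with probability $1-\delta$ for both tails,
\begin{align*}
|\cdot|\le\sqrt{2\sigma^2\log(2/\delta)}+\tfrac{2TL_C}{3n}\log(2/\delta).
\end{align*}

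The main obstacle is that $\hat W$ is itself random, while Freedman requires a deterministic bound on the quadratic variation. We would handle this with the standard refinement: Freedman's bound holds on the event $\{\langle M\rangle\le \sigma^2\}$. One can plug in the realized quadratic variation directly, because $\hat W$ is determined by the conditional variances, which are predictable. If a cleaner statement is needed, a peeling argument over a geometric grid of $\sigma^2$ values in $[0,T^3L_C^2]$ would cost only a logarithmic factor. We expect the stated constants to come from the version of Freedman with the realized predictable variance, and we would cite such a variant.
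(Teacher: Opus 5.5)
Your route is the paper's. You unroll the simulation-lemma recursion, expand the empirical kernel sample by sample, and order the randomness backward (by stage, then by sample index, which is exactly the paper's $\cG_{i,h}$). You then bound the predictable variance by $\frac1n\hat W^{\pi,\theta}_{t,n}(s,x)$ and apply Freedman's inequality. Centering each term at its own mean $\EE_{\PP_h(\cdot|s',a)}$, rather than at the mixture mean $\EE_{\tilde\PP^\pi_h(\cdot|s',x')}$ as the paper does, is a harmless variation. The law of total variance over $a\sim\pi_h$ then recovers $\Var_{\tilde\PP^\pi_h}$ as you say. Two steps are not right as written, though.

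First, $\sum_h\sum w_h^2\frac1n\Var_{\PP_h(\cdot|s',a)}(\hat V_{h+1,n})$ is not the predictable quadratic variation. Samples are indexed by $(s',a)$ only, whereas $\mu^\pi_{t,h}(\cdot|s,x)$ typically charges several accumulated costs $x'$ with the same $s'$. All of these terms use the same $\xi^i_{h,s',a}$, so their cross-covariances do not vanish. For the same reason they cannot be separate martingale increments: all terms sharing $(h,i)$ must form one increment, as in the paper's $\Delta_{i,h}$.

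The bound you want survives through the paper's Jensen step. Since the weights sum to one, $\Var(\sum w_h D)\le\EE[(\sum w_h D)^2]\le\sum w_h\EE[D^2]$. This gives $\frac1{n^2}\sum w_h\Var_{\PP_h(\cdot|s',a)}(\hat V_{h+1,n})$ per increment directly, with no need for $w_h^2\le w_h$.

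Similarly, $|D|\le\|\hat V^{\pi,\theta}_{h+1,n}\|_\infty$ is not automatic for a centered quantity; this is why the paper uses $2TL_C/n$. Your $TL_C/n$ is nonetheless correct. By telescoping, $\hat V^{\pi,\theta}_{h+1,n}(\cdot,x'')+f_\theta(x'')$ takes values in $f_\theta([x'',x''+T-h])$, an interval of length at most $(T-h)L_C$.

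Second, predictability of the conditional variances does not let you plug the realized $\langle M\rangle=\frac1n\hat W^{\pi,\theta}_{t,n}(s,x)$ into Freedman's bound. Freedman's inequality controls $\Pr(M\ge a,\ \langle M\rangle\le\sigma^2)$ only for deterministic $a$ and $\sigma^2$.

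Your peeling fallback is the correct repair. Take a union bound over a dyadic grid of $\sigma^2$ between $T^3L_C^2/n$ and a floor of order $R^2\log(1/\delta)$, with $R=TL_C/n$, so there are $K=\cO(\log(nT))$ levels. This gives the inequality with $\log(2/\delta)$ replaced by $\log(2K/\delta)$ and slightly larger constants.

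The paper's proof simply invokes Freedman's inequality at this point and so passes over the same issue. The extra $\log\log$-type term is harmless for every downstream sample complexity. Strictly, though, this argument (yours or the paper's) establishes that weaker form, not the lemma with the displayed constants and $\log(2/\delta)$.
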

\begin{proof}
    From \eqref{dp_finite_mdp_fix_policy}, we obtain
    \begin{align}
        (\hat{V}_{t,n}^{\pi, \theta} - \tilde{V}_t^{\pi, \theta})(s,x) & = (\EE_{\hat{\PP}_{t,n}^\pi(\cdot|s,x)} - \EE_{\tilde{\PP}_t^\pi(\cdot|s,x)})[\hat{V}_{t+1,n}^{\pi, \theta}] + \EE_{\tilde{\PP}_t^\pi(\cdot|s,x)}[\hat{V}_{t+1,n}^{\pi, \theta} - \tilde{V}_{t+1}^{\pi, \theta}] \label{fin_mdpite_pol_eval_bell_err_rec}\\
        & = \tsum_{h=t}^{T-1} \EE_{\tilde \PP}^\pi \left[\left.(\EE_{\hat{\PP}_{h,n}^\pi(\cdot|S_h,X_h)} - \EE_{\tilde{\PP}_h^\pi(\cdot|S_h,X_h)})[\hat{V}_{h+1,n}^{\pi, \theta}]\right| (S_t, X_t) = (s,x)\right], \nonumber
    \end{align}
    where the last equality follows from recursively applying \eqref{fin_mdpite_pol_eval_bell_err_rec},
    and $\hat{\PP}_{h,n}$ denotes the empirical estimate of $\tilde{\PP}_h$.
    For any $h \geq t$,   $(s,x) \in \cS_t \times \cX_t$,
    and $(s', x') \in \cS_h \times \cX_h$,
    let   $\mu_{t,h}^\pi (s',x'|s,x)$ denote the probability of reaching $(s',x')$ at stage $h$ under policy $\pi$ starting  from state $(s,x)$ at stage $t$. 
    Applying \eqref{dp_finite_mdp_fix_policy} recursively, we have
    \begin{align}
        (\hat{V}_{t,n}^{\pi, \theta} - \tilde{V}_t^{\pi, \theta})(s,x) & = \tsum_{h=t}^{T-1} \EE_{(s',x') \sim \mu_{t,h}^{\pi}(\cdot|s,x)} \left[(\EE_{\hat{\PP}_{h,n}^\pi(\cdot|s',x')} - \EE_{\tilde{\PP}_h^\pi(\cdot|s',x')})[\hat{V}_{h+1,n}^{\pi, \theta}] \right]. \label{fin_mdp_pol_eval_err_sum_decom}  
    \end{align}    
For any $i \in [n]$, let $Y_{\{i,h,s,a\}}$ be the $i$-th sample from $\PP_h(\cdot\given s,a)$ used to construct $\hat{\PP}_{h,n}(\cdot\given s,a)$. 
Let us define 
$
\delta_{i, h}(s', x', a') = \hat V_{h+1,n}^{\pi,\theta}
    \rbr{Y_{\{i,h,s',a'\}},x'+c_h(s',a')}
    -
    \EE_{\tilde{\PP}_h^\pi(\cdot\given s',x')}
    [{\hat V_{h+1,n}^{\pi,\theta}}],
$
and
\begin{align*}
    \Delta_{i,h}
    \coloneqq
    \frac{1}{n}
    \EE_{{(s',x')\sim \mu_{t,h}^{\pi}(\cdot\given s,x) 
    , a'\sim \pi_h(\cdot\given s',x')}}
    \sbr{
   \delta_{i, h}(s', x', a')
    } .
\end{align*}
Clearly, we have $({\hat V_{t,n}^{\pi,\theta}-\tilde V_t^{\pi,\theta}})(s,x)
=\sum_{h=t}^{T-1}\sum_{i=1}^n\Delta_{i,h}$.
Let 
$
\cG_{i,h}
=
\sigma (
\{Y_{\{j,k,s,a\}}: k\geq h+1,\ j\in[n],\ (s,a)\in\cS_k\times\cA_k\}
\cup
\{Y_{\{j,h,s,a\}}: j\ge i,\ (s,a)\in\cS_h\times\cA_h\}
)
$
denote the $\sigma$-algebra generated by the samples from stage $h$ after index $i$, and all samples starting from $h+1$.
Clearly, we have
\begin{align*}
    \EE\sbr{\Delta_{i,h}\given \cG_{i+1,h}}=
    0 .
\end{align*}
In addition, from Jensen's inequality, we have
\begin{align}
    \Var\rbr{\Delta_{i,h}\given \cG_{i+1,h}}
    & = \frac{1}{n^2} \EE \sbr{ \rbr{
    \EE_{{(s',x')\sim \mu_{t,h}^{\pi}(\cdot\given s,x) 
    , a'\sim \pi_h(\cdot\given s',x')}}
    \sbr{
   \delta_{i, h}(s', x', a')
    }
    }^2
    } \nonumber  \\
    & \leq 
    \frac{1}{n^2} \EE_{{(s',x')\sim \mu_{t,h}^{\pi}(\cdot\given s,x) 
    , a'\sim \pi_h(\cdot\given s',x')}} \sbr{ 
    \EE \sbr{
   \delta_{i, h}(s', x', a')
    }^2
    } \nonumber  \\
    & =
    \frac{1}{n^2}
    \EE_{(s',x')\sim \mu_{t,h}^{\pi}(\cdot\given s,x)}
    \sbr{
    \Var_{\tilde{\PP}_h^\pi(\cdot\given s',x')}
    \rbr{\hat V_{h+1,n}^{\pi,\theta}}
    } . \label{eval_variance_decomp_mdp}
\end{align}
Hence by combining \eqref{eval_variance_decomp_mdp} with the definition of $\hat{W}_{t,n}^{\pi, \theta}$ in \eqref{fin_mdp_pol_eval_err_var_sum_decom}, we obtain
\begin{align*}
    \tsum_{h=t}^{T-1}\tsum_{i=1}^n
    \Var\rbr{\Delta_{i,h}\given \cG_{i+1,h}}
    \le
    \frac{1}{n}\hat W_{t,n}^{\pi,\theta}(s,x).
\end{align*}
Combining the above observation with  $
\abr{\Delta_{i,h}}\leq \frac{2T L_C}{n}$ given that $|{\hat{V}^{\pi,\theta}_{h,n}}| 
\leq TL_C$, we can conclude the proof by invoking  Freedman's inequality. 
\end{proof}

Lemma \ref{lemma_fin_mdp_pol_eval_berns_sum} suggests that to control the error $(\hat{V}_{t,n}^{\pi, \theta} - \tilde{V}_t^{\pi, \theta})(s,x)$, it suffices to control the variance 
$\hat{W}_{t,n}^{\pi, \theta}$ defined as in \eqref{fin_mdp_pol_eval_err_var_sum_decom}.
It should be noted that the conditional variance  
$\Var_{\tilde{\PP}_h^\pi (\cdot|S_{h}, X_{h})} (\hat{V}_{h+1,n}^{\pi, \theta}) $ therein depends on the empirical value $\hat{V}_{h+1,n}^{\pi, \theta}$.
On the other hand, by utilizing the dynamic decomposition of total variance \cite{gheshlaghi2013minimax, azar2017minimax, agarwal2020model,sidford2018near}, we establish below that the counterpart of $\hat{W}_{t,n}^{\pi, \theta}$, defined by 
\begin{align*}
        {W}_{t}^{\pi, \theta}(s,x) =  \tsum_{h = t}^{T-1} \EE_{\tilde \PP}^\pi \left[ \left. \Var_{\tilde{\PP}_h^\pi (\cdot|S_{h}, X_{h})} (\tilde{V}_{h+1}^{\pi, \theta}) \right| (S_t, X_t) = (s,x) \right],
\end{align*}
scales at the order of $\cO(T^2)$.

\begin{lemma}
    \label{lemma_fin_mdp_pol_eval_bell_tot_var}
    For all $0 \leq t \leq T-1, s \in \cS_t$ and $ x \in \cX_t$, we have
    \begin{align*} 
    W_{t}^{\pi, \theta}(s,x)  \leq (T-t)^2 L_C^2. 
    \end{align*}
\end{lemma}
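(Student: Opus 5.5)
We will use the law of total variance for the cumulative augmented cost along the trajectory of the augmented MDP $\tilde{\cM}_\theta$ under $\pi$. Starting from $(S_t,X_t)=(s,x)$, let
\begin{align*}
G_t \coloneqq \tsum_{h=t}^T \tilde{c}_h^\theta(S_h,X_h,A_h).
\end{align*}
The key structural fact is that the augmented cost telescopes. Since $X_{h+1}=X_h+c_h(S_h,A_h)$, we have
\begin{align*}
G_t = f_\theta(X_T+c_T(S_T,A_T)) - f_\theta(x).
\end{align*}
The final accumulated cost $X_T+c_T(S_T,A_T)$ lies in $[x, x+(T-t+1)]\subseteq[0,T+1]$. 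Hence by Assumption \ref{assump_basic},
\begin{align*}
\abs{G_t - \rbr{f_\theta(x+(T-t+1)/2)-f_\theta(x)}} \leq L_C (T-t+1)/2,
\end{align*}
so $G_t$ is contained in an interval of length $(T-t+1)L_C$. This gives $\Var(G_t\mid (S_t,X_t)=(s,x)) \le (T-t+1)^2L_C^2/4$, which is already of the right order. We will refine the range slightly below.

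Next we establish the standard decomposition
\begin{align*}
\Var(G_t\mid s,x) = W_t^{\pi,\theta}(s,x) + \tsum_{h=t}^{T}\EE^\pi_{\tilde\PP}\sbr{\Var_{a\sim\pi_h}(\cdot)\,\big|\,(s,x)} \;\ge\; W_t^{\pi,\theta}(s,x).
\end{align*}
To derive it, write $G_t-\tilde V_t^{\pi,\theta}(s,x)$ as a sum of martingale differences. For each $h\ge t$,
\begin{align*}
D_h = \tilde c_h^\theta(S_h,X_h,A_h) + \tilde V^{\pi,\theta}_{h+1}(S_{h+1},X_{h+1}) - \tilde V_h^{\pi,\theta}(S_h,X_h),
\end{align*}
with $\tilde V_{T+1}\equiv 0$. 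These terms are uncorrelated, since each has zero conditional mean given $(S_h,X_h)$, using \eqref{dp_finite_mdp_fix_policy}. Therefore $\Var(G_t)=\sum_h\EE[\EE[D_h^2\mid S_h,X_h]]$.

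We then apply conditional total variance to $D_h$ given $(S_h,X_h)$, splitting over the randomness in $(A_h, S_{h+1})$. The conditional second moment is at least the variance of $\tilde V_{h+1}^{\pi,\theta}$ under $\tilde\PP_h^\pi(\cdot\mid S_h,X_h)$. This requires a small care step: $\tilde c_h^\theta$ depends on $A_h$, and $X_{h+1}$ is a deterministic function of $A_h$. The cleanest route is to note that $\EE[D_h^2\mid S_h,X_h]=\Var(\tilde c_h+\tilde V_{h+1}\mid S_h,X_h)$. We then need a lower bound on this by $\Var_{\tilde\PP_h^\pi}(\tilde V_{h+1})$. This does not follow immediately, because $\tilde c_h$ is correlated with $\tilde V_{h+1}$ through $A_h$. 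This is the main obstacle.

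To circumvent it, we will work instead with the shifted quantity
\begin{align*}
U_{h+1}(s',x') \coloneqq \tilde V^{\pi,\theta}_{h+1}(s',x') + f_\theta(x') = \EE\sbr{f_\theta(\text{final cost})\mid (S_{h+1},X_{h+1})=(s',x')}.
\end{align*}
This differs from $\tilde V_{h+1}$ by a function of $x'$ only. We then observe that $\tilde c_h+\tilde V_{h+1}=U_{h+1}-f_\theta(X_h)$, where $f_\theta(X_h)$ is fixed given the conditioning. Hence the per-step variance equals $\Var_{\tilde\PP^\pi_h}(U_{h+1})$, and summing gives $\Var(G_t)=\sum_h \EE[\Var(U_{h+1})]$ exactly.

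If the lemma is meant literally with $\tilde V_{h+1}$, we bound the discrepancy via $\Var(\tilde V_{h+1})\le \Var(U_{h+1})$ plus cross terms, with $f_\theta(X_{h+1})$ varying by at most $L_C$ in one step. If that loses too much, we will instead bound $W_t$ directly from $\Var(G_t)\le (T-t+1)^2L_C^2/4$, which yields the stated $(T-t)^2L_C^2$ in all cases $T-t\ge1$.
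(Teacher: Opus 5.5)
There is a genuine gap. Your fallback, and the decomposition $\Var(G_t)=W_t+(\text{nonnegative terms})$ you first set out to prove, both rest on $W_t^{\pi,\theta}(s,x)\le\Var(G_t\mid (S_t,X_t)=(s,x))$. This inequality is false in general, precisely because of the correlation through $A_h$ that you flagged.

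Here is a counterexample. Take $f_\theta(z)=z$ (so $L_C=1$) and $t=T-1$. At $s$ there are two actions with $c_{T-1}(s,a_1)=1$ and $c_{T-1}(s,a_2)=0$, leading deterministically to $s_1$ and $s_2$ respectively. Stage $T$ has a single action with $c_T(s_1,\cdot)=0$ and $c_T(s_2,\cdot)=1$, and $\pi_{T-1}$ is uniform. Then $G_{T-1}\equiv 1$, so $\Var(G_{T-1})=0$. However, $\tilde V_T^{\pi,\theta}$ equals $0$ at $(s_1,x+1)$ and $1$ at $(s_2,x)$, so $W_{T-1}^{\pi,\theta}(s,x)=1/4$.

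So your telescoping bound $\Var(G_t)\le (T-t+1)^2L_C^2/4$, though correct, does not control $W_t$. The shift to $U_{h+1}$ gives a correct identity, but it is an identity for $\sum_h\EE[\Var(U_{h+1})]$, not for $W_t$. Since $\tilde V_{h+1}=U_{h+1}-f_\theta(x')$ and $f_\theta(X_{h+1})$ moves with the action, the two can differ: in the example $U_T$ is constant while $\tilde V_T$ is not. If you make your cross-term comparison precise, via $\sqrt{\Var(\tilde V_{h+1})}\le\sqrt{\Var(U_{h+1})}+L_C/2$ and Cauchy--Schwarz over $h$, you get only $W_t\le \tfrac{L_C^2}{4}(T-t+1+\sqrt{T-t})^2$. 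That is the right order, but it exceeds $(T-t)^2L_C^2$ when $T-t\in\{1,2\}$, so the stated bound is not reached.

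The missing idea is to average out the action in each stage cost before applying the law of total variance, which is what the paper does. Let $\tilde c_h^{\pi,\theta}(s,x)=\EE_{a\sim\pi_h(\cdot|s,x)}[\tilde c_h^\theta(s,x,a)]$ and $Y_t=\sum_{h=t+1}^{T}\tilde c_h^{\pi,\theta}(S_h,X_h)$. Three facts then hold:
\begin{itemize}
\item Each summand is now a function of the augmented state.
\item $\EE[Y_t\mid S_{t+1},X_{t+1}]=\tilde V_{t+1}^{\pi,\theta}(S_{t+1},X_{t+1})$.
\item Conditionally on $(S_{t+1},X_{t+1})$, the variance of $Y_t$ equals that of $Y_{t+1}$.
\end{itemize}
The law of total variance therefore gives $\Var(Y_t\mid s,x)=\Var_{\tilde\PP_t^\pi(\cdot|s,x)}(\tilde V_{t+1}^{\pi,\theta})+\EE_{\tilde\PP_t^\pi(\cdot|s,x)}[\Var(Y_{t+1}\mid S_{t+1},X_{t+1})]$. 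Unrolling this shows $\Var(Y_t\mid s,x)=W_t^{\pi,\theta}(s,x)$ exactly. Since $|\tilde c_h^\theta|\le L_C$, you have $|Y_t|\le (T-t)L_C$, and $W_t^{\pi,\theta}\le (T-t)^2L_C^2$ follows at once. The telescoping refinement is not needed, and the averaged costs do not telescope anyway.
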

\begin{proof}
    Consider
    the variance of the cost-to-go defined by
    \begin{align}
    W_t^{\pi, \theta}(s,x) \coloneqq \Var_{\tilde \PP}^\pi \left( \left. \tsum_{h=t+1}^{T} \tilde{c}_h^{\pi, \theta}(S_h, X_h) \right| (S_t, X_t) = (s,x) \right), \nonumber
    \end{align}
    where $\tilde{c}_t^{\pi, \theta}(s,x) \coloneqq \EE_{a \sim \pi_t(\cdot|s,x)}[\tilde{c}_t^\theta(s,x,a)]$. Conditioning on $(S_{t+1}, X_{t+1})$ and using the law of total variance, we obtain
    \begin{align}
        W_t^{\pi, \theta}(s,x) = \Var_{\tilde{\PP}_t^\pi(\cdot|s,x)}(\tilde{V}_{t+1}^{\pi, \theta}) + \EE_{\tilde{\PP}_t^\pi(\cdot|s,x)} [W_{t+1}^{\pi, \theta}]. \label{fin_mdp_pol_eval_var_bell_rec}
    \end{align}
    Recursive application of \eqref{fin_mdp_pol_eval_var_bell_rec} then yields
    \begin{align}
    W_t^{\pi, \theta}(s,x) = \tsum_{h = t}^{T-1} \EE_{\tilde \PP}^\pi \left[ \left. \Var_{\tilde{\PP}_h^\pi (\cdot|S_{h}, X_{h})} (\tilde{V}_{h+1}^{\pi, \theta}) \right| (S_t, X_t) = (s,x) \right]. \nonumber
    \end{align}
    The proof is concluded by noting that $\abs{\tsum_{h=t+1}^{T} \tilde{c}_h^{\pi, \theta}(S_h, X_h)} \leq (T-t) L_C$ and hence $W_t^{\pi, \theta} \leq (T-t)^2 L_C^2$.
\end{proof}

With Lemma \ref{lemma_fin_mdp_pol_eval_val_lip}, \ref{lemma_fin_mdp_pol_eval_berns_sum}, and \ref{lemma_fin_mdp_pol_eval_bell_tot_var} in place, we are now ready to establish the sample complexity of estimating $\cR^\pi$ using $\hat{\cR}^\pi_n$.

\begin{theorem}\label{thrm_sample_fin_mdpite_eval}
Suppose $\pi$ satisfies \eqref{eq_finite_mdp_policy_lip}.
    For any $\epsilon > 0$ and $\delta \in (0,1)$, take 
    \begin{align*}
        n =  \mathcal{O}\left(\tfrac{T^2L_C^2}{\epsilon^2}\log \left(\tfrac{T^4 |\cS| L_C L_\pi (L_\Theta R_\Theta /\epsilon)^{d_\Theta}}{\delta \epsilon}\right)\right).
    \end{align*}
    Then with probability at least $1-\delta$,  we have
    \begin{align*} 
    \abs{\hat{\cR}_n^\pi(s) - \cR^\pi(s) } 
    \leq \epsilon,
    ~ \forall s \in \cS_0.
    \end{align*} 
\end{theorem}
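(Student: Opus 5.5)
Since $|\min_\theta a(\theta) - \min_\theta b(\theta)| \le \sup_\theta |a(\theta)-b(\theta)|$, Theorem \ref{theorem_risk_dp_fix_policy} gives
\begin{align*}
|\hat{\cR}^\pi_n(s) - \cR^\pi(s)| \le \sup_{\theta \in \Theta} |(\hat{V}^{\pi,\theta}_{0,n} - \tilde{V}^{\pi,\theta}_0)(s,0)|.
\end{align*}
So it suffices to bound the error at the fixed initial augmented state $(s,0)$, uniformly over $\theta \in \Theta$ and $s \in \cS_0$.

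The first step is a uniform version of Lemma \ref{lemma_fin_mdp_pol_eval_berns_sum}. The variance term $\hat{W}^{\pi,\theta}_{h,n}$ is evaluated at states $(S_h,X_h)$ reached from $(s,0)$, where $X_h$ ranges over the continuum $\cX_h$. To control the variance recursion below, I also need the bound at intermediate states $(s',x)$, uniformly over $x$. I will build an $\eta$-net $\cN_\Theta$ of $\Theta$ of size $(R_\Theta/\eta)^{d_\Theta}$ and, for each $t$, an $\eta$-grid $\cN_t$ of $\cX_t$ of size $T/\eta$. Applying Lemma \ref{lemma_fin_mdp_pol_eval_berns_sum} with a union bound over $t \le T$, $s \in \cS_t$, the grids and $\cN_\Theta$ costs a factor $\log(T^2|\cS|(R_\Theta/\eta)^{d_\Theta}/(\eta\delta))$. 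I then extend off the nets. In $x$, I use Lemma \ref{lemma_fin_mdp_pol_eval_val_lip}: both $\tilde V$ and $\hat V$ are $L^\pi_t = \cO(T^2 L_C L_\pi)$-Lipschitz, which gives an error of order $T^2 L_C L_\pi \eta$. In $\theta$, Assumption \ref{assump_basic} gives $|\tilde c^\theta_t - \tilde c^{\theta'}_t| \le 2L_\Theta\|\theta-\theta'\|$, hence $|\tilde V^{\pi,\theta}_t - \tilde V^{\pi,\theta'}_t| \le 2(T+1)L_\Theta \|\theta-\theta'\|$, and likewise for $\hat V$. Choosing $\eta \approx \epsilon/(T^2 L_C L_\pi + T L_\Theta)$ produces the logarithmic factor in the statement. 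The variance terms must also be handled on the nets, so I will check that $\Var(\hat V)$ changes by at most $\cO(TL_C)$ times the sup perturbation.

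The second step, which I expect to be the main obstacle, is converting $\hat{W}$ into $W$. I will use
\begin{align*}
\sqrt{\Var(\hat V)} \le \sqrt{\Var(\tilde V)} + \|\hat V - \tilde V\|_\infty.
\end{align*}
Define the sup error at stage $h$ as $\Delta_h = \sup_{\theta,s,x}|\hat V^{\pi,\theta}_{h,n}-\tilde V^{\pi,\theta}_h|$. Then
\begin{align*}
\hat W^{\pi,\theta}_{t,n} \le 2W^{\pi,\theta}_t + 2\sum_{h>t}\Delta_h^2 \le 2T^2L_C^2 + 2T\max_h \Delta_h^2,
\end{align*}
using Lemma \ref{lemma_fin_mdp_pol_eval_bell_tot_var} for the $W$ term. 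Plugging this into the uniform Freedman bound gives, with $\iota$ the log factor,
\begin{align*}
\Delta \le \sqrt{4T^2L_C^2\iota/n} + \sqrt{4T\iota/n}\,\Delta + \tfrac{2TL_C\iota}{3n} + \epsilon/3 .
\end{align*}
Once $n \gtrsim T\iota$, the $\Delta$ term on the right can be absorbed, yielding $\Delta \lesssim TL_C\sqrt{\iota/n} + TL_C\iota/n + \epsilon/3$. Taking $n = \cO(T^2L_C^2\iota/\epsilon^2)$ gives $\Delta_0 \le \epsilon$. The lower-order condition $n \gtrsim T\iota$ is implied by this choice whenever $\epsilon \lesssim \sqrt{T}L_C$; otherwise the claim is trivial, since $|\hat V - \tilde V| \le 2(T+1)L_C$.

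The remaining steps are then routine: I will confirm that $\iota$ matches the stated logarithm and rescale $\epsilon$ by constants. The delicate points are that the absorption argument requires the bound on $\Delta_h$ to hold simultaneously for all $h$, which the union bound over stages provides, and that the Lipschitz extension in $x$ must be applied to the variance-weighted bound rather than only to the values.
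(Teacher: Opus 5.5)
Your proposal follows essentially the paper's proof. The shared steps are: Freedman's bound from Lemma \ref{lemma_fin_mdp_pol_eval_berns_sum} with a union bound over $\eta$-nets of $\cX_t$; extension off the nets via Lemma \ref{lemma_fin_mdp_pol_eval_val_lip}; the comparison $\hat W^{\pi,\theta}_{t,n}\le 2(T-t)^2L_C^2+2(T-t)(\cB^{\pi,\theta}_n)^2$ from $\Var(Y+Z)\le 2\Var(Y)+2\EE[Z^2]$ and Lemma \ref{lemma_fin_mdp_pol_eval_bell_tot_var}; absorption of the self-referential term once $n\ge 16T\iota$; and a covering of $\Theta$. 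The only organizational difference is that you put the $\theta$-net inside the union bound and absorb a supremum over $\theta$ as well. The paper instead absorbs for each fixed $\theta$ and covers $\Theta$ afterwards, and the two are equivalent.

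One step you wrote is wrong: the fallback for $\epsilon$ above order $\sqrt{T}L_C$. By telescoping, $\tilde V^{\pi,\theta}_t(s,x)=\EE^\pi_{\tilde\PP}[f_\theta(X_{T+1})\mid S_t=s,X_t=x]-f_\theta(x)$ with $X_{T+1}=X_T+c_T(S_T,A_T)\in[0,T+1]$, and the same holds for $\hat V^{\pi,\theta}_{t,n}$ with $\hat\PP_n$. So the a priori error bound is $(T+1)L_C$. That only makes $\epsilon\ge (T+1)L_C$ trivial, not the whole range between order $\sqrt{T}L_C$ and $(T+1)L_C$. The correct fix is what the paper tacitly does with ``suppose $n\ge 16T\log(2/\delta')$'': take $n$ to be the maximum of $CT^2L_C^2\iota/\epsilon^2$ and $16T\iota$, keeping the burn-in inside the $\cO(\cdot)$.

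Two smaller remarks. First, no Lipschitz control of $\Var(\hat V)$ is needed for evaluation. The bound on $\hat W$ holds at every $(s,x)$, so only value errors must be transferred off the nets. The variance estimate \eqref{ineq_var_lipschitz_eval} is needed only for data-dependent policies in Lemma \ref{lemma_fin_mdp_pol_opt_bern_rad}. Second, the telescoping identity above shows that $\hat V^{\pi,\theta}_{t,n}-\tilde V^{\pi,\theta}_t$ is $2L_\Theta$-Lipschitz in $\theta$, with no factor of $T$. Using separate radii $\epsilon/(8L_0^\pi)$ for $x$ and $\epsilon/(8L_\Theta)$ for $\theta$ then reproduces the stated logarithm exactly. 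Your common $\eta$ together with the $2(T+1)L_\Theta$ bound adds extra $d_\Theta$-weighted logarithmic terms in $T$, $L_C$, $L_\pi$.
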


\begin{proof}
Let us first  fix $\theta \in \Theta$.
    For $0 \leq t \leq T-1$, let $\cN_t$ denote an $\eta$-net for $[0, t]$ for some $\eta > 0$ to be determined later. Combining Lemma \ref{lemma_fin_mdp_pol_eval_berns_sum} with a union bound, we have with probability at least $1-\delta$ that
    \begin{equation}
        \label{fin_mdp_pol_eval_gen_bound_disct}
        \abs{ (\hat{V}_{t,n}^{\pi, \theta} - \tilde{V}_{t}^{\pi, \theta})(s,x_\eta)} \leq \sqrt{\tfrac{2}{n} \hat{W}_{t, n}^{\pi, \theta}(s,x_\eta)\log\left(\tfrac{2}{\delta'}\right)} + \tfrac{2 T L_C}{3n}  \log\left(\tfrac{2}{\delta'}\right),
    \end{equation}
    for all $0 \leq t \leq T, s \in \cS_t $ and $ x_\eta \in \cN_t$, where $\delta' = \tfrac{\delta}{T |\cS| |\cN_T|}$. Let us define
    \begin{align}
        \cB^{\pi,\theta}_n & \coloneqq \sup\left\{|(\hat{V}_{t,n}^{\pi, \theta} - \tilde{V}_{t}^{\pi, \theta})(s,x)| :0 \leq t \leq T, s \in \cS_t, x \in [0,t]\right\}, \label{fin_mdp_pol_eval_sup_def}\\
        \cB^{\pi,\theta}_n(\eta) & \coloneqq \sup\left\{|(\hat{V}_{t,n}^{\pi, \theta} - \tilde{V}_{t}^{\pi, \theta})(s,x_\eta) | : 0 \leq t \leq T, s \in \cS_t, x_\eta \in \cN_t\right\}. \label{fin_mdp_pol_eval_sup_disc_def}
    \end{align}
    From Lemma \ref{lemma_fin_mdp_pol_eval_val_lip}, we have
    \begin{align}
       \cB^{\pi,\theta}_n \leq &  2L_0^\pi \eta + \cB^{\pi,\theta}_n(\eta). \label{fin_mdp_pol_eval_gen_bound}
    \end{align}
    Combining the above observation with  \eqref{fin_mdp_pol_eval_gen_bound_disct}, it suffices to control $\hat{W}_{t, n}^{\pi, \theta}(s,x_\eta)$.
    To this end, we have
    \begin{align}
        \hat{W}_{t, n}^{\pi, \theta} (s,x) & = \tsum_{h = t}^{T-1} \EE_{\tilde \PP}^\pi \left[ \left. \Var_{\tilde{\PP}_h^\pi (\cdot|S_{h}, X_{h})} (\hat{V}_{h+1,n}^{\pi, \theta}) \right| (S_t, X_t) = (s,x) \right] \nonumber \\
        & \overset{(a)}{\leq} 2 \tsum_{h=t}^{T-1} \EE_{\tilde \PP}^\pi \left[ \left. \Var_{\tilde{\PP}_{h}^{\pi}(\cdot| S_{h}, X_{h})} ( \tilde{V}_{h+1}^{\pi, \theta}) \right| (S_t, X_t) = (s,x) \right] \nonumber \\
        & ~~~~ + 2 \tsum_{h = t}^{T-1} \EE_{\tilde \PP}^\pi \left[ \left. \EE_{\tilde{\PP}_{h}^{\pi}(\cdot| S_{h}, X_{h})} \left[(\hat{V}_{h+1,n}^{\pi, \theta} - \tilde{V}_{h+1}^{\pi, \theta})^2\right] \right| (S_t, X_t) = (s,x) \right] \label{elemental_variance_comparison} \\
        & \overset{(b)}{\leq} 2(T-t)^2 L_C^2 + 2(T-t)(\cB^{\pi,\theta}_n)^2, \label{fin_mdp_pol_eval_var_gen_bound}
    \end{align}
    where $(a)$ follows from the simple fact that  $\Var(Y + Z) \leq 2\Var(Y) + 2\EE[Z^2]$, and $(b)$ follows from Lemma \ref{lemma_fin_mdp_pol_eval_bell_tot_var}.
    Substituting \eqref{fin_mdp_pol_eval_var_gen_bound} into \eqref{fin_mdp_pol_eval_gen_bound_disct}, we obtain
    \begin{align*}
        \cB^{\pi,\theta}_n(\eta) & \leq \sqrt{\tfrac{2(2T^2 L_C^2 + 2T(\cB^{\pi,\theta}_n)^2)}{n}\log(\tfrac{2}{\delta'})} + \tfrac{2 T L_C}{3n} \log(\tfrac{2}{\delta'}) \\
        & \leq 2 T L_C \sqrt{\tfrac{1}{n} \log (\tfrac{2}{\delta'})} + 2 (2L_0^\pi \eta + \cB^{\pi,\theta}_n(\eta)) \sqrt{\tfrac{T}{n} \log (\tfrac{2}{\delta'})} + \tfrac{2T L_C}{3n}  \log(\tfrac{2}{\delta'}),
    \end{align*}
    where the last inequality uses \eqref{fin_mdp_pol_eval_gen_bound}. Suppose $n \geq 16T \log(\tfrac{2}{\delta'})$, then the above relation yields
    \begin{align}
    \cB^{\pi,\theta}_n \leq 4 T L_C \sqrt{\tfrac{1}{n} \log (\tfrac{2}{\delta'})} + 2L_0^\pi \eta + \tfrac{4T L_C}{3n}  \log(\tfrac{2}{\delta'}). \nonumber
    \end{align}
    Hence  it suffices to choose $\eta = \tfrac{\epsilon}{8 L_0^\pi}$, and
    \begin{align}
        n = \cO \left( \tfrac{T^2 L_C^2}{\epsilon^2} \log \left(\tfrac{1}{\delta'} \right)\right) \overset{(c)}{=} \cO \left( \tfrac{T^2 L_C^2}{\epsilon^2} \log \left(\tfrac{T^4 |\cS| L_C L_\pi}{\delta \epsilon} \right)\right) \label{fin_mdp_pol_eval_samp_comp_fix_theta}, 
    \end{align}
    from which we obtain 
    $
        \cB^{\pi,\theta}_n \leq \tfrac{3\epsilon}{4}.
    $
    Here $(c)$ follows from $|\cN_T| \leq \tfrac{ 8T L_0^\pi}{\epsilon}$, the definition of $\delta'$, and the fact that  Lemma \ref{lemma_fin_mdp_pol_eval_val_lip} implies $L_0^\pi = \cO(T^2 L_C L_\pi)$.
    It remains to control $\cB^{\pi,\theta}_n$ uniformly over all $\theta \in \Theta$.
    Let $\cN_\Theta$ denote the $\eta_\Theta$-net for $\eta_\Theta = \frac{\epsilon}{8L_\Theta}$. 
    Then applying the union bound, we have from \eqref{fin_mdp_pol_eval_samp_comp_fix_theta} that 
    $\sup_{\theta \in \cN_\Theta} \cB^{\pi,\theta}_n \leq \frac{3\epsilon}{4}$, provided  
    \begin{align*}
    n = \cO \left( \tfrac{T^2 L_C^2}{\epsilon^2} \log \left(\tfrac{T^4 |\cS| L_C L_\pi |\cN_\Theta|}{\delta \epsilon} \right)\right).
    \end{align*}
    Hence from Assumption \ref{assump_basic} and the choice of $\eta_\Theta$, we have
    $
        \cB^{\pi,\theta}_n \leq 2L_\Theta \eta_\Theta + B_{\pi, \theta_\eta} \leq \epsilon,
    $
    where $\theta_\eta \in \cN_\Theta$ is the closest point to $\theta$.
    The desired claim then follows from $|\cN_\Theta | \leq \left(1 +  {2 R_\Theta}/{\eta_\Theta}\right)^{d_\Theta}$.
\end{proof}

\begin{remark}\label{remark_lipschitz_policy}
It is worth discussing the role of condition \eqref{eq_finite_mdp_policy_lip} and its potential relaxation. 
From \eqref{fin_mdp_pol_eval_gen_bound_disct}, it is clear that Lemma \ref{lemma_fin_mdp_pol_eval_val_lip} is used to provide uniform control of $(\hat{V}_{t,n}^{\pi, \theta} - \tilde{V}_{t}^{\pi, \theta})(s,x)$ over $x \in \cX_t$ at every stage $t$, which in turn is a  consequence of condition \eqref{eq_finite_mdp_policy_lip}. 
It can be readily seen that for the purpose of uniformly controlling $(\hat{V}_{t,n}^{\pi, \theta} - \tilde{V}_{t}^{\pi, \theta})(s,x)$, one can instead control the error of estimating $\PP$ via $\hat{\PP}_n$ in $\ell_1$-distance. 
However, such an approach would clearly pay an additional factor of $\abs{\cS}$ in the final sample complexity.  
\end{remark}

In view of Theorem \ref{thrm_sample_fin_mdpite_eval}, the sample complexity for risk-averse policy evaluation with a static risk measure \eqref{eq_risk_functional} satisfying Assumption \ref{assump_basic} can be bounded on the order of 
$\tilde{\cO}(\frac{T^3  \abs{\cS} \abs{\cA} L_C^2 d_\Theta}{\epsilon^2} \log(\tfrac{L_\pi L_\Theta R_\Theta}{\delta \epsilon}))$.
Clearly, when $\cR^\pi (\cdot) = \EE^\pi \sbr{\cdot}$ corresponds to the expectation,  then one can take $L_C = 1$ and $f_\theta (z) = z$ in \eqref{eq_risk_functional}, and the obtained sample complexity recovers the optimal sample complexity for risk-neutral MDPs. 
In Section \ref{sec_application}, we will further instantiate Theorem \ref{thrm_sample_fin_mdpite_eval} with various concrete forms of risk functionals induced by $\phi$-divergences (Example \ref{example_phi_divergence_dro}), and establish some new sample complexity results for risk-averse MDPs.

\subsubsection{Risk-averse Policy Optimization}\label{sec_mdp_sample_finite_opt}
We now turn our attention to establishing the sample complexity for policy optimization \eqref{eq_risk_finite_horizon_mdp}.
Let us denote by $\hat{V}_{t,n}^\theta$ the optimal value function of the augmented MDP, defined in the same way as $\tilde{V}^\theta_t$ in  \eqref{dp_val_give_theta_finite}, with $\PP$ replaced by $\hat{\PP}_n$ therein.
In addition, we use  
\begin{align}\label{mdp_opt_risk_empirical}
    \cR^*(s) = \min_{\pi \in \Pi(\cM)} \cR^\pi(s), ~~~ \hat{\cR}^*_n(s) =  \min_{\pi \in \Pi(\cM)} \hat\cR_{n}^\pi(s)
\end{align}
to denote the optimal risk and its empirical estimate.
For the rest of our discussion in this section, we are interested in estimating the optimal risk $\cR^*$  via the estimator $\hat{\cR}^*_n$.
That is, for  any target precision $\epsilon > 0$, we are interested in the number of samples $n$ necessary to ensure 
$\sup_{s \in \cS_0} |{ \hat{\cR}^*_n(s) - \cR^*(s)}| \leq \epsilon$,
and that $\hat{\pi}^*_n \in \Argmin_{\pi} \hat{\cR}_n^\pi(s)$ is an $\epsilon$-optimal policy for \eqref{eq_risk_finite_horizon_mdp} with high probability.

Before we proceed, it could be worth discussing our basic idea moving forward. 
Clearly, we have 
\begin{align*}
    \cR^*(s) = \min_{\theta \in \Theta} \min_{\pi \in \Pi(\cM)} \tilde{V}^{\pi, \theta}_0(s,0) + f_\theta(0), ~~~
    \hat{\cR}_n^*(s) = \min_{\theta \in \Theta} \min_{\pi \in \Pi(\cM)} \hat{V}^{\pi, \theta}_{0,n}(s,0) + f_\theta(0).
\end{align*}
Hence it suffices to control 
$
\cE_n = {\min_{\pi \in \Pi(\cM)} \tilde{V}^{\pi, \theta}_0(s,0) - \min_{\pi \in \Pi(\cM)} \hat{V}^{\pi, \theta}_{0,n}(s,0)}
$ uniformly for all $\theta$.
To this end, let $\pi^\theta \in \Argmin_{\pi} \tilde{V}^{\pi, \theta}_0(s,0)$
and $\hat{\pi}^\theta_n \in \Argmin_{\pi} \hat{V}^{\pi, \theta}_{0,n}(s,0) $.
We  have 
\begin{align}\label{ineq_policy_opt_mdp_basic}
     \tilde{V}^{\pi^\theta, \theta}_0(s,0) - \hat{V}^{\pi^\theta, \theta}_{0,n}(s,0) \overset{(a)}{\leq}  
    \cE_n \overset{(b)}{\leq}  \tilde{V}^{\hat{\pi}^\theta_n, \theta}_0(s,0) - \hat{V}^{\hat{\pi}^\theta_n, \theta}_{0,n}(s,0).
\end{align}
It is then immediate that the left-hand side of $(a)$ can be readily controlled in view of Theorem \ref{thrm_sample_fin_mdpite_eval}, provided that the optimal policy $\pi^\theta$ is Lipschitz continuous w.r.t the augmented state variable (i.e., \eqref{eq_finite_mdp_policy_lip}). 
Unfortunately, this is clearly not the case given that $\pi^\theta$ is non-randomized (Proposition  \ref{thrm_dp_fix_theta_finite}). 
Nevertheless, we proceed to show that one can construct a close-to-optimal policy without paying too much of a price on its Lipschitz constant. 
To this end, we first establish the Lipschitz continuity of the optimal value function  $\tilde{V}^\theta_t$
with respect to the augmented state variable.

\begin{lemma}{\label{lemma_fin_mdp_pol_opt_val_lip}
   For any $0 \leq t \leq T$ and $s \in \cS_t$, the value functions $\tilde{V}_t^\theta(s, \cdot)$ and $\hat{V}^\theta_{t,n}(s, \cdot)$ are Lipschitz with modulus
    \begin{align*}
         L_t = 2(T-t+1)L_C.
    \end{align*}
}    
\end{lemma}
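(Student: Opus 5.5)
We argue by backward induction on $t$, using the dynamic programming recursion \eqref{dp_val_give_theta_finite} of Proposition \ref{thrm_dp_fix_theta_finite}. The same argument applies verbatim to $\hat{V}^\theta_{t,n}$ with $\PP_t$ replaced by $\hat{\PP}_{t,n}$, since it uses no property of the transition kernel other than being a probability distribution. The key fact is that a pointwise minimum of functions that are each $L$-Lipschitz in $x$ is itself $L$-Lipschitz: $\abs{\min_a g_a(x) - \min_a g_a(y)} \leq \max_a \abs{g_a(x) - g_a(y)}$. Because the optimal value involves a minimum over deterministic actions rather than an average under a policy, no policy-Lipschitz term (no $L_\pi$) enters, unlike in Lemma \ref{lemma_fin_mdp_pol_eval_val_lip}.

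\textbf{Base case.} For $t = T$, $\tilde{V}_T^\theta(s,x) = \min_{a} \tilde{c}_T^\theta(s,x,a)$. For fixed $a$, Assumption \ref{assump_basic} gives
\[
\abs{\tilde{c}_T^\theta(s,x,a) - \tilde{c}_T^\theta(s,y,a)} \leq \abs{f_\theta(x + c_T(s,a)) - f_\theta(y + c_T(s,a))} + \abs{f_\theta(x) - f_\theta(y)} \leq 2 L_C \abs{x-y}.
\]
This is legitimate because all arguments lie in $[0, T+1]$, using $x, y \in [0,T]$ and $c_T \in [0,1]$. Taking the minimum over $a$ preserves this bound, giving modulus $2L_C = L_T$.

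\textbf{Inductive step.} Assume $\tilde{V}_{t+1}^\theta(s', \cdot)$ is $L_{t+1}$-Lipschitz on $\cX_{t+1}$ for every $s'$. For fixed $a$, the map
\[
x \mapsto \tilde{c}_t^\theta(s,x,a) + \tsum_{s'} \PP_t(s'|s,a)\, \tilde{V}^\theta_{t+1}(s', x + c_t(s,a))
\]
has Lipschitz modulus at most $2L_C + L_{t+1}$. The first term contributes $2L_C$ exactly as in the base case. The second is a convex combination of $L_{t+1}$-Lipschitz functions evaluated at a common translation $x \mapsto x + c_t(s,a)$; the translated point lies in $[0, t+1] = \cX_{t+1}$, so the inductive hypothesis applies. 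Minimizing over $a$ preserves the modulus, so $L_t = 2L_C + L_{t+1} = 2(T-t+1)L_C$.

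There is no substantive obstacle. The only care points are two domain checks: the arguments of $f_\theta$ must stay in $[0, T+1]$, where Assumption \ref{assump_basic} holds, and the shifted augmented state $x + c_t(s,a)$ must stay in $\cX_{t+1}$.
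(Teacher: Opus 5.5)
Your proposal is correct and follows essentially the same backward induction as the paper: a base-case bound of $2L_C$ from Assumption \ref{assump_basic}, combined with the fact that a minimum over actions does not increase the Lipschitz modulus, and then the recursion $L_t = 2L_C + L_{t+1}$. Your explicit checks that the arguments of $f_\theta$ stay in $[0,T+1]$ and that $x + c_t(s,a) \in \cX_{t+1}$ are a useful clarification the paper leaves implicit.
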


\begin{proof}
As in Lemma \ref{lemma_fin_mdp_pol_eval_val_lip}, we proceed by backward induction. For $t = T$ and any $s \in \cS_T$, we have
\begin{align*}
    | \tilde V_T^\theta(s,x) - \tilde V_T^\theta(s,y)| & = |\min_{a \in \cA_T} \tilde{c}_T^\theta(s,x,a) - \min_{a \in \cA_T} \tilde{c}_T^\theta(s,y,a)| 
     \leq 2L_C|x-y|,
\end{align*}
where the last inequality follows from Assumption \ref{assump_basic}. For $0 \leq t \leq T-1$, we obtain
\begin{align*}
     & |\tilde{V}_t^\theta(s,x) - \tilde{V}_t^\theta(s,y)| \\
     \leq & \max_{a \in \cA_t}|\tilde{c}_t^\theta(s,x,a) -  \tilde{c}_t^\theta(s,y,a) + \mathbb{E}_{s' \sim \PP_t(\cdot|s,a)}[\tilde{V}_{t+1}^\theta(s', x + c_t(s,a)) - \tilde{V}_{t+1}^\theta(s', y + c_t(s,a))]| \\
     \leq & 2L_C|x-y| + 2(T-t)L_C|x-y|,
\end{align*}
where the last inequality applies the inductive hypothesis.
\end{proof}
With Lemma \ref{lemma_fin_mdp_pol_opt_val_lip}, we can establish that for any $\epsilon > 0$, there exists an $\epsilon$-optimal  randomized policy $\pi$  that satisfies  \eqref{eq_finite_mdp_policy_lip} with a Lipschitz constant $L_{\pi} = \cO(\log(1/\epsilon)/\epsilon)$.

\begin{lemma}\label{lemma_close_to_opt_lip_policy_mdp}
    For any $\epsilon > 0$, there exists a randomized policy $\pi$ such that 
    $
            \tilde{V}^{\pi, \theta}_t(s,x) -  \tilde{V}^{ \theta}_t(s,x)  \leq \epsilon$
    and $
        \norm{
        \pi_t(\cdot|s,x) -  \pi_t(\cdot|s,x') 
        }_1 
         \leq 
        L_\pi(\epsilon) \abs{x - x'}
    $, 
where 
\begin{align}\label{lipschitz_const_approx_opt}
L_\pi(\epsilon) = \frac{4 (T+1)  (T+2) L_C}{\epsilon}
        \log \rbr{
        \frac{2  \abs{\cA} (T+1)^2 L_C }{\epsilon }
        } .
\end{align}
\end{lemma}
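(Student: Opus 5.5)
We will construct the policy as a softmax (Gibbs) policy built from the optimal Q-functions of the augmented MDP. For each stage $t$, define
\begin{align*}
\tilde{Q}^\theta_t(s,x,a) = \tilde{c}_t^\theta(s,x,a) + \tsum_{s'} \PP_t(s'|s,a)\tilde{V}^\theta_{t+1}(s', x+c_t(s,a)),
\end{align*}
with $\tilde{V}^\theta_{T+1}\equiv 0$. Then set $\pi_t(a|s,x) \propto \exp(-\beta \tilde{Q}^\theta_t(s,x,a))$ for a temperature parameter $\beta>0$ that we choose later.

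Using the optimal $Q$-functions, rather than those of $\pi$ itself, has two advantages. The construction is non-recursive, and the Lipschitz property follows directly from Lemma \ref{lemma_fin_mdp_pol_opt_val_lip}. Assumption \ref{assump_basic} together with that lemma gives, for each $a$,
\begin{align*}
\abs{\tilde{Q}^\theta_t(s,x,a)-\tilde{Q}^\theta_t(s,x',a)} \le 2L_C\abs{x-x'} + L_{t+1}\abs{x-x'} \le 2(T+1)L_C \abs{x-x'}.
\end{align*}
The softmax map $q\mapsto \mathrm{softmax}(-\beta q)$ is $2\beta$-Lipschitz from $\ell_\infty$ to $\ell_1$: its Jacobian is $\beta(\mathrm{diag}(p)-pp^\top)$, whose $\ell_\infty\to\ell_1$ norm is at most $2\beta$. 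Hence
\begin{align*}
\norm{\pi_t(\cdot|s,x)-\pi_t(\cdot|s,x')}_1 \le 4\beta (T+1)L_C\abs{x-x'}.
\end{align*}

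For near-optimality, we first bound the per-stage suboptimality gap. A standard Gibbs estimate gives
\begin{align*}
\EE_{a\sim\pi_t}[\tilde{Q}^\theta_t(s,x,a)] - \min_a \tilde{Q}^\theta_t(s,x,a) \le \gamma,
\end{align*}
where we need $\gamma$ in terms of $\beta$. Actions with gap at least $\gamma/2$ receive total mass at most $\abs{\cA} e^{-\beta\gamma/2}$, and each such action has gap at most the range of $\tilde{Q}^\theta_t$, namely $R:=2(T+1)L_C$. The expected gap is therefore at most $\gamma/2 + \abs{\cA} R e^{-\beta\gamma/2}$. Choosing $\beta = \frac{2}{\gamma}\log(2\abs{\cA}R/\gamma)$ makes this at most $\gamma$. (Alternatively, one can use the entropy bound $\log\abs{\cA}/\beta$, but that yields a worse dependence.)

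Next, a performance-difference argument (a telescoping backward induction) gives
\begin{align*}
\tilde{V}^{\pi,\theta}_t(s,x)-\tilde{V}^\theta_t(s,x) = \EE^\pi\Big[\tsum_{h=t}^T \big(\EE_{a\sim\pi_h}\tilde{Q}^\theta_h - \tilde{V}^\theta_h\big)(S_h,X_h)\Big] \le (T+1)\gamma.
\end{align*}
Setting $\gamma=\epsilon/(T+1)$ yields the claimed $\epsilon$-optimality. The resulting Lipschitz constant is
\begin{align*}
4\beta(T+1)L_C = \frac{8(T+1)^2L_C}{\epsilon}\log\Big(\frac{2\abs{\cA}\cdot 2(T+1)^2 L_C}{\epsilon}\Big).
\end{align*}
This matches \eqref{lipschitz_const_approx_opt} up to constant bookkeeping. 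The stated form, with $(T+2)$ and without the factor 8, suggests using the stage-dependent range $L_{t+1}+2L_C$ and slightly sharper constants, for example a Lipschitz modulus of $\beta$ per unit sup-change in the softmax. I will calibrate the constants at the end.

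The main obstacle is making the softmax gap bound and the softmax Lipschitz bound mesh with the exact constants. The structural argument is routine. One point needs care: the theorem applies to all $(s,x)$ and all $t$, and it involves the reachable $x$-range $[0,t]$. Since the bounds above are uniform in $(s,x)$, this causes no difficulty.
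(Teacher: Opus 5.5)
Your construction and proof skeleton are the same as the paper's. You use a softmax policy over the optimal augmented $Q$-functions $\tilde{Q}^\theta_t$, Lipschitz continuity of $\tilde{Q}^\theta_t$ in $x$ from Lemma \ref{lemma_fin_mdp_pol_opt_val_lip}, a Lipschitz bound for the softmax map, a per-stage Gibbs suboptimality bound, and the performance difference lemma. All of these steps are valid. Your modulus $2(T-t+1)L_C$ for $\tilde{Q}^\theta_t$ is slightly sharper than the paper's $2(T-t+2)L_C$. That modulus, not a stage-dependent range, is where the factor $(T+2)$ in \eqref{lipschitz_const_approx_opt} comes from.

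The gap is in the constants you leave to be ``calibrated.'' Your argument actually proves the modulus $\frac{8(T+1)^2L_C}{\epsilon}\log\frac{4\abs{\cA}(T+1)^2L_C}{\epsilon}$. This exceeds the stated $L_\pi(\epsilon)$ for every $T\geq 0$, both in the prefactor and inside the logarithm. So the lemma with the claimed constant is not yet established.

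Two specific sharpenings close this.

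\emph{Softmax modulus.} The softmax is $\beta$-Lipschitz from $\ell_\infty$ to $\ell_1$, not merely $2\beta$. For $\norm{v}_\infty\leq 1$, your Jacobian gives $\beta\tsum_i p_i\abs{v_i-\bar v}$ with $\bar v=p^\top v$. This is at most $\beta\sqrt{\Var_p(v)}\leq\beta$, because $v_i\in[-1,1]$.

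\emph{Gap range.} The suboptimality gap of any action is at most $(T+1)L_C$, not $2(T+1)L_C$. Since $\tsum_h\tilde{c}^\theta_h$ telescopes to $f_\theta(X_{T+1})-f_\theta(X_t)$, the quantity $\tilde{Q}^\theta_t(s,x,a)+f_\theta(x)$ lies between the minimum and maximum of $f_\theta$ on $[x,x+T-t+1]\subseteq[0,T+1]$. Hence, for fixed $(s,x)$, all $Q$-values lie in an interval of length $(T-t+1)L_C$.

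With $R=(T+1)L_C$ and $\gamma=\epsilon/(T+1)$, your choice becomes $\beta=\frac{2(T+1)}{\epsilon}\log\frac{2\abs{\cA}(T+1)^2L_C}{\epsilon}$. This is exactly the paper's temperature, and the policy modulus is $\beta\cdot 2(T-t+1)L_C\leq L_\pi(\epsilon)$.

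The softmax sharpening alone does not suffice. The doubled argument of the logarithm is not, in general, absorbed by the factor $(T+2)/(T+1)$. This discrepancy is harmless for the paper's later theorems, where $L_\pi(\epsilon)$ only enters through a logarithm, but it matters for the statement as written.
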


\begin{proof}
We first show that the optimal Q-function, defined as 
\begin{align*}
    \tilde{Q}_t^\theta(s,x,a) \coloneqq \tilde{c}_t^\theta(s, x, a) + \tsum_{s' \in \cS_{t+1}} \PP_t(s' | s, a) \tilde{V}^\theta_{t+1}(s', x + c_t(s, a)),
\end{align*}
is Lipschitz continuous in $x$ for every $(s, a) \in \cS_t \times \cA_t$.
To see this, note that 
\begin{align}\label{ineq_lipschitz_optimal_q}
    & \abs{
    \tilde{Q}_t^\theta(s,x,a) - \tilde{Q}_t^\theta(s,x',a)
    } \nonumber  \\
      \leq &  \abs{
    \tilde{c}_t^\theta(s, x, a) - \tilde{c}_t^\theta(s, x', a)
    }   + \sup_{s'} \abs{
    \tilde{V}^\theta_{t+1}(s', x + c_t(s, a))
    - \tilde{V}^\theta_{t+1}(s', x' + c_t(s, a))
    } \nonumber \\
    \overset{(a)}{\leq}  & 
    2 (T-t+2) L_C \abs{x - x'}
\end{align}
where $(a)$ follows from Lemma \ref{lemma_fin_mdp_pol_opt_val_lip}, the definition of $\tilde{c}_t^\theta$, and Assumption \ref{assump_basic}.
Now for any $\eta >0$, consider the policy $\pi^\eta$ defined as 
$
    \pi_t^\eta(s,x, \cdot) = \mathrm{Softmax}_\eta ({ - \tilde{Q}^\theta_t(s,x, \cdot) }),
$
where $
[\mathrm{Softmax}_\eta (q)]_i = \exp( \eta q_i) /\tsum_{j} \exp( \eta q_j)
$.
Clearly, we have 
$
\norm{\mathrm{Softmax}_\eta (q) - \mathrm{Softmax}_\eta (q')}_1 \leq \eta \norm{q - q'}_\infty,
$
and hence
\begin{align}\label{ineq_approx_opt_policy_lip}
    \norm{\pi_t^\eta(s, x, \cdot) - \pi_t^\eta(s, x', \cdot ) }_1 
    \leq \eta \norm{\tilde{Q}^\theta_t(s,x, \cdot) - \tilde{Q}^\theta_t(s,x', \cdot)}_\infty
    \overset{(b)}{\leq} 2 \eta (T-t+2) L_C \abs{x - x'},
\end{align}
where $(b)$ follows from \eqref{ineq_lipschitz_optimal_q}.
On the other hand, it holds that
\begin{align}
    \tilde{V}^{\pi^\eta, \theta}_0(s,x) -  \tilde{V}^{ \theta}_0(s,x)
    & \overset{(c)}{=} \EE^{\pi^\eta}_{\tilde{\PP}}
    \sbr{
        \tsum_{t=0}^T  \tsum_{a \in \cA_t} \pi_t(a|S_t, X_t)\rbr{  \tilde{Q}_t^\theta (S_t, X_t, a) - \min_{a\in \cA_t} \tilde{Q}_t^\theta(S_t, X_t, a) 
        }
    }  \nonumber \\ 
    & \leq 
     \EE^{\pi^\eta}_{\tilde{\PP}}
    \sbr{
        \tsum_{t=0}^T \rbr{\tsum_{a \in \cA_t^\epsilon(S_t, X_t) } \pi_t(a|S_t, X_t) \epsilon 
        + \tsum_{a \notin \cA_t^\epsilon(S_t, X_t) }  (T+1) L_C \exp(-\eta \epsilon)
        }
    } 
    \nonumber \\
    & \overset{(d)}{\leq}   (T+1) \sbr{ \epsilon 
    + \abs{\cA} (T+1) L_C \exp(-\eta \epsilon) }
    \overset{(e)}{\leq} 2 (T+1) \epsilon,
    \label{ineq_approx_opt_policy_gap}
\end{align}
where 
$\cA_t^\epsilon (s,x)= \{a \in \cA_t: 
\tilde{Q}^\theta_t(s,x,a) \leq 
\min_{a \in \cA_t} \tilde{Q}^\theta_t(s,x,a) + \epsilon 
\}$, 
 $(c)$ follows from the performance difference lemma \cite{kakade2002approximately},
 $(d)$ follows from the definition of $\mathrm{Softmax}_\eta$ and $\cA_t^\epsilon(s,x)$, 
 and $(e)$ follows from the choice of 
$
\eta =  \log( \max_t \abs{\cA_t} (T+1) L_C /\epsilon ) / \epsilon .
$
The desired claim then follows by replacing $\epsilon$
with $\epsilon / [2(T+1)]$ in \eqref{ineq_approx_opt_policy_gap},
while utilizing \eqref{ineq_approx_opt_policy_lip}.
\end{proof}

With Lemma \ref{lemma_close_to_opt_lip_policy_mdp} in place, one can replace $\pi^\theta$ in \eqref{ineq_policy_opt_mdp_basic} with a Lipschitz continuous policy $\pi$, and subsequently control the left-hand side of $(a)$ therein via Theorem \ref{thrm_sample_fin_mdpite_eval}.
Our remaining discussion will focus on controlling the right-hand side of $(b)$ in \eqref{ineq_policy_opt_mdp_basic} instead. 
In particular, we seek to control $|   \tilde{V}^{\pi, \theta}_0(s) - \hat{V}^{\pi, \theta}_{0,n}(s) |$ for potentially data-dependent policy $\pi$.

We proceed to establish a counterpart of Lemma \ref{lemma_err_bernstein_eval} in the context of  policy optimization. 
Specifically, Lemma \ref{lemma_fin_mdp_pol_opt_bern_rad} controls the error $ |{ (\hat{V}_{t,n}^{\pi, \theta} - \tilde{V}_t^{\pi, \theta})(s,x)} |$ via the  variance
$\Var_{\tilde{\PP}_t(\cdot|s,x,a)} (\hat{V}_{t+1,n}^{\pi, \theta})$, for some policy $\pi$ that potentially depends on the samples.
It should be noted that in contrast to policy evaluation (Lemma \ref{lemma_err_bernstein_eval}),  due to the policy $\pi$ being sample-dependent, the martingale structure in the error decomposition \eqref{fin_mdp_pol_eval_err_sum_decom} is lost for policy optimization. 
Consequently, we proceed with the following approach. 

\begin{lemma}\label{lemma_fin_mdp_pol_opt_bern_rad}
    Let $\pi$ be a potentially sample-dependent policy  satisfying \eqref{eq_finite_mdp_policy_lip}. For any $\eta > 0$ and $\delta \in (0,1)$, with probability at least $1-\delta$, we have 
    \begin{align}
        \abs{ (\hat{V}_{t,n}^{\pi, \theta} - \tilde{V}_t^{\pi, \theta})(s,x)} \leq & \EE_{\tilde{\PP}}^\pi \left[\left. \tsum_{h=t}^{T-1}b_{h,n}^{\pi, \theta}(S_h, X_h, A_h)\right| (S_t, X_t) = (s,x) \right] \nonumber \\
        & ~~~  + 4TL_{0}^\pi \eta + \sqrt{\tfrac{8T^3L_C L_{0}^\pi \eta}{n} \log(\tfrac{2T^2 |\cS| |\cA|}{\delta \eta})},  
            \end{align}
   for any $0 \leq t \leq T-1$, $s \in \cS_t$, and $ x \in [0,t]$, 
    where
 \begin{align}\label{eq_def_local_bernstein}
  b_{t,n}^{\pi, \theta}(s,x,a) \coloneqq \sqrt{\tfrac{2}{n}\Var_{\tilde{\PP}_t(\cdot|s,x,a)} (\hat{V}_{t+1,n}^{\pi, \theta}) \log(\tfrac{2T^2 |\cS| |\cA|}{\delta \eta})} + \tfrac{2T L_C}{3n} \log(\tfrac{2T^2 |\cS| |\cA|}{\delta \eta}).
  \end{align}
\end{lemma}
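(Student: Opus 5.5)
The plan starts from the simulation-lemma decomposition already used in the proof of Lemma \ref{lemma_err_bernstein_eval}, but applied pathwise per state--action pair. Recursively applying \eqref{dp_finite_mdp_fix_policy} in both $\tilde{\cM}_\theta$ and its empirical counterpart gives, for any (possibly sample-dependent) $\pi$,
\begin{align*}
(\hat{V}_{t,n}^{\pi,\theta}-\tilde V_t^{\pi,\theta})(s,x)=\EE^\pi_{\tilde\PP}\Big[\tsum_{h=t}^{T-1}\big(\EE_{\hat\PP_{h,n}(\cdot|S_h,A_h)}-\EE_{\PP_h(\cdot|S_h,A_h)}\big)\big[\hat V^{\pi,\theta}_{h+1,n}(\cdot,X_h+c_h(S_h,A_h))\big]\,\Big|\,(S_t,X_t)=(s,x)\Big].
\end{align*}
This is a purely algebraic identity, so it holds for data-dependent $\pi$. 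It is therefore enough to bound each inner term $|(\hat\PP_{h,n}-\PP_h)(\cdot|s,a)\,v|$ with $v=\hat V^{\pi,\theta}_{h+1,n}(\cdot,x')$, simultaneously for all $h$, $(s,a)$ and $x'\in[0,h+1]$, by $b^{\pi,\theta}_{h,n}(s,x,a)$ plus a discretization error. Summing along the trajectory under $\EE^\pi_{\tilde\PP}$ then produces the stated bound.

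To obtain uniformity in $x'$, I would discretize. Let $\cN_{h}$ be an $\eta$-net of $[0,h+1]$, so $|\cN_h|\le T/\eta$. For every $x'$ take its nearest net point $x'_\eta$. By Lemma \ref{lemma_fin_mdp_pol_eval_val_lip}, which applies to $\hat V^{\pi,\theta}_{h+1,n}$ because $\pi$ satisfies \eqref{eq_finite_mdp_policy_lip}, replacing $x'$ by $x'_\eta$ changes $(\hat\PP-\PP)v$ by at most $2L_0^\pi\eta$. The same replacement changes the variance by at most $\Var(v)-\Var(v_\eta)\le 2\cdot TL_C\cdot 2L_0^\pi\eta$, using $\|v\|_\infty\le TL_C$ and $\|v-v_\eta\|_\infty\le L_0^\pi\eta$. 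I then apply Bernstein's inequality at each grid point and take a union bound over $h\le T$, $(s,a)$ and the $T/\eta$ grid points. This yields the logarithmic factor $\log(2T^2|\cS||\cA|/(\delta\eta))$ appearing in \eqref{eq_def_local_bernstein}. Converting the grid variance back to $\Var_{\tilde\PP_h(\cdot|s,x,a)}(\hat V^{\pi,\theta}_{h+1,n})$ via $\sqrt{a+b}\le\sqrt a+\sqrt b$ costs $\sqrt{\tfrac{8TL_CL_0^\pi\eta}{n}\log(\cdot)}$ per stage. Each stage also pays two discretization terms of size $2L_0^\pi\eta$, one in the mean and one on the Bernstein side. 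Summing over at most $T$ stages gives exactly $4TL_0^\pi\eta+\sqrt{\tfrac{8T^3L_CL_0^\pi\eta}{n}\log(\cdot)}$.

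The main obstacle is that Bernstein's inequality requires the test function to be independent of the samples $\{Y_{i,h,s,a}\}_i$. Here $v=\hat V^{\pi,\theta}_{h+1,n}(\cdot,x'_\eta)$ depends on the later-stage samples, which is harmless, and also on $\pi$, which may depend on the stage-$h$ samples. My first attempt is to make the tested object deterministic given the stage-$(\ge h+1)$ data. I would apply Bernstein conditionally on $\cG_{1,h+1}$ to the function class indexed only by the grid point and by the restriction of $\pi$ to stages $\ge h+1$. I would then argue that the Lipschitz condition \eqref{eq_finite_mdp_policy_lip}, together with the bounded range, lets one replace this class by finitely many representatives whose number is already absorbed by the $1/\eta$ factor in the union bound, so that the same $\log(2T^2|\cS||\cA|/(\delta\eta))$ suffices. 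If this absorption fails, the fallback is to pay an extra covering of the later-stage policy in the logarithm. That fallback changes only constants and log factors, but it would not reproduce the exact constant in the statement. I therefore expect the precise handling of this dependence to be the delicate step, while the rest of the argument is routine bookkeeping.
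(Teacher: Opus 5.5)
Your architecture is the paper's. You take an $\eta$-net in the augmented coordinate and apply Bernstein's inequality at the grid points, with a union bound over $(h,s,a,x_\eta)$; this is the source of $\log(2T^2|\cS||\cA|/(\delta\eta))$. You then move both the mean and the variance off the grid via Lemma \ref{lemma_fin_mdp_pol_eval_val_lip}, which produces the per-stage $\sqrt{8TL_CL_0^\pi\eta\log(\cdot)/n}$, and sum over at most $T$ stages. Two of your choices differ from the paper but don't matter: you use the exact telescoping identity with absolute values inside rather than unrolling a one-step inequality, and you discretize the next-stage coordinate $x'$ rather than the current $x$. Your bookkeeping is fine up to constants.

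\textbf{The gap is the independence step you flag and leave open, and neither proposed resolution works.}
\begin{itemize}
\item Conditioning on $\cG_{1,h+1}$ helps only if $\pi_{h+1},\dots,\pi_T$ are themselves $\cG_{1,h+1}$-measurable, and then there is nothing to cover.
\item If those policies may react to the stage-$h$ samples, there is no covering ``absorbed by the $1/\eta$ factor''. That factor counts grid points in $x$, not policies.
\item Your fallback is not a log-factor loss. Covering the possible test vectors in $[-TL_C,TL_C]^{|\cS|}$ puts $|\cS|\log(TL_C/\eta)$ inside $\log(1/\delta')$, which multiplies $n$ by $|\cS|$. This is exactly the price Remark \ref{remark_lipschitz_policy} is designed to avoid.
\end{itemize}

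In fact the bound is false for arbitrary data-dependent $\pi$. Take $f_\theta(z)=z$, two actions with costs $0$ and $1$ at each stage-$(h+1)$ state, and let $\pi_{h+1}$ (constant in $x$, hence Lipschitz) pick cost $1$ at $s'$ if and only if $\hat\PP_{h,n}(s'|s,a)>\PP_h(s'|s,a)$. The error then equals $\tfrac12\|\hat\PP_{h,n}(\cdot|s,a)-\PP_h(\cdot|s,a)\|_1\approx\sqrt{|\cS|/n}$. With, e.g., $\eta=1/n$, the right-hand side is only of order $\sqrt{\log(|\cS|n/\delta)/n}$.

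\textbf{The missing idea is backward measurability of the policy actually fed into the lemma.} In Theorem \ref{thrm_sample_fin_mdpite_opt}, $\hat\pi^\theta_\epsilon$ is the softmax of the empirical optimal $Q$-function, i.e.\ Lemma \ref{lemma_close_to_opt_lip_policy_mdp} applied to $\hat\PP_n$. Its stage-$h'$ component depends only on $\hat\PP_{h',n},\dots,\hat\PP_{T-1,n}$. Consequently:
\begin{itemize}
\item $\hat V^{\pi,\theta}_{h+1,n}(\cdot,x_\eta)$ is $\cG_{1,h+1}$-measurable;
\item given $\cG_{1,h+1}$, the stage-$h$ samples at each $(s,a)$ are still i.i.d.\ from $\PP_h(\cdot|s,a)$;
\item Bernstein therefore applies conditionally at each grid point, with exactly your union bound and no extra covering.
\end{itemize}
The paper's proof uses this property implicitly when it invokes Bernstein directly. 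You should state it as a hypothesis on $\pi$, verify it for $\hat\pi^\theta_\epsilon$, and delete the policy-covering step.
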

\begin{proof}
    Let $\cN_t$ be an $\eta$-net for the interval $[0,t]$. Applying Bernstein's inequality and taking a union bound yields
    \begin{equation}
    \label{fin_mdp_pol_opt_gen_bound_disc}
        \abs{(\EE_{\hat{\PP}_{t,n} (\cdot|s,x_\eta,a)} - \EE_{\tilde{\PP}_t (\cdot|s,x_\eta,a)})[\hat{V}_{t+1,n}^{\pi, \theta}]} \leq b_{t,n}^{\pi, \theta}(s,x_\eta, a), 
    \end{equation}
for any $ 0 \leq t \leq T-1$,  $s \in \cS_t$, $x_\eta \in \cN_t$,  $a \in \cA_t$ with probability at least $1- \delta$.
    For each $x \in [0,t]$, let $x_\eta$ be its closest point on $\cN_t$. We have
    \begin{align}
        & \abs{(\hat{V}_{t,n}^{\pi, \theta} - \tilde{V}_{t}^{\pi, \theta})(s,x)}\nonumber \\
        \overset{(a)}{\leq} & 2L_{0}^\pi \eta + \abs{\EE_{a \sim \pi_t (\cdot|s,x)} \left[ (\EE_{\hat{\PP}_{t,n}(\cdot|s,x_\eta,a)} - \EE_{\tilde{\PP}_{t}(\cdot|s,x_\eta,a)}) [\hat{V}_{t+1,n}^{\pi, \theta}] + \EE_{\tilde{\PP}_t (\cdot|s,x_\eta,a)}[\hat{V}_{t+1,n}^{\pi, \theta} - \tilde{V}_{t+1}^{\pi, \theta}] \right]} \nonumber \\
        \overset{(b)}{\leq} & 2 L_{0}^\pi \eta + \EE_{a \sim \pi_t (\cdot|s,x)} \left[ b_{t,n}^{\pi, \theta}(s,x_\eta, a) + \EE_{\tilde{\PP}_t (\cdot|s,x_\eta,a)}\left[\abs{\hat{V}_{t+1,n}^{\pi, \theta} - \tilde{V}_{t+1}^{\pi, \theta}}\right] \right]  \label{ineq_opt_val_err_uniform_raw}
            \end{align}
    where $(a)$ follows from Lemma \ref{lemma_fin_mdp_pol_eval_val_lip}, and $(b)$ follows from \eqref{fin_mdp_pol_opt_gen_bound_disc}.
    
To proceed, let us first show that the conditional variance $\Var_{\tilde{\PP}_t(\cdot|s,x,a)} (\hat{V}_{t+1, n}^{\pi, \theta})$ is indeed Lipschitz in the augmented state variable $x$.
    Since the augmented transition kernel is translation-equivariant in $x$, we have
    \begin{align}
    & \abs{ \Var_{s',x' \sim \tilde{\PP}_t (\cdot|s,x,a)}(\hat{V}_{t+1,n}^{\pi, \theta}(s',x')) -   \Var_{s',x' \sim \tilde{\PP}_t (\cdot|s,y,a)}(\hat{V}_{t+1,n}^{\pi, \theta}(s',x'))  }  \nonumber \\
   =     & \abs{\Var_{s',x' \sim \tilde{\PP}_t (\cdot|s,x,a)}(\hat{V}_{t+1,n}^{\pi, \theta}(s',x')) - \Var_{s',x' \sim \tilde{\PP}_t (\cdot|s,x,a)}(\hat{V}_{t+1,n}^{\pi, \theta}(s',x' - (x-y)))} \nonumber \\
        \leq &  4 (T-t) L_C L_{t+1}^\pi \abs{x-y}, \label{ineq_var_lipschitz_eval}
    \end{align}
    where the last inequality follows from a direct application of Lemma \ref{lemma_fin_mdp_pol_eval_val_lip}.
Combining \eqref{ineq_opt_val_err_uniform_raw}, \eqref{ineq_var_lipschitz_eval}, and the definition of $b_{t,n}^{\pi, \theta}$ in \eqref{eq_def_local_bernstein}, we can conclude that 
  \begin{align*}
         & \abs{(\hat{V}_{t,n}^{\pi, \theta} - \tilde{V}_{t}^{\pi, \theta})(s,x)}\nonumber \\
       \leq & 4L_{0}^\pi \eta + \sqrt{\tfrac{8 T L_C L_{0}^\pi \eta}{n} \log(\tfrac{2T^2 |\cS| |\cA|}{\delta \eta})} +  \EE_{a \sim \pi_t (\cdot|s,x)} \left[ b_{t,n}^{\pi, \theta}(s,x, a) + \EE_{\tilde{\PP}_t (\cdot|s,x,a)}\left[\abs{\hat{V}_{t+1,n}^{\pi, \theta} - \tilde{V}_{t+1}^{\pi, \theta}} \right] \right].
\end{align*}
 The desired claim then follows from the recursive application of the above observation.
\end{proof}

With Lemma \ref{lemma_close_to_opt_lip_policy_mdp}, \ref{lemma_fin_mdp_pol_opt_val_lip}, and \ref{lemma_fin_mdp_pol_opt_bern_rad} in place, we are ready to establish the sample complexity for estimating the optimal risk $\cR^*$ of the risk-averse policy optimization problem \eqref{eq_risk_finite_horizon_mdp}.
We will also discuss how the result readily implies a procedure to produce an $\epsilon$-optimal policy for \eqref{eq_risk_finite_horizon_mdp}, which simultaneously satisfies the Lipschitz  property of \eqref{eq_finite_mdp_policy_lip}.

\begin{theorem}\label{thrm_sample_fin_mdpite_opt}
For any $\epsilon > 0$ and $\delta \in (0,1)$, take
\begin{align}
    n = \cO\left( \tfrac{T^3 L_C^2}{\epsilon^2} \log \left(\tfrac{T^7 |\cS| |\cA| L_C^3 \log(T^2|\cA|L_C/\epsilon) (L_\Theta R_\Theta/\epsilon)^{d_\Theta}}{\delta \epsilon^3} \right)\right). \nonumber
\end{align}
%
Then with probability at least $1-\delta$, we have
\begin{align}
    \abs{ \hat{\cR}^*_n (s) - \cR^* (s)  } \leq \epsilon, ~  \forall s \in \cS_0. \nonumber
\end{align}
\end{theorem}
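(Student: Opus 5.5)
The proof follows the sandwich \eqref{ineq_policy_opt_mdp_basic}, applied for each fixed $\theta$ and then made uniform over a net of $\Theta$. Write $\tilde V^\theta_0$ and $\hat V^\theta_{0,n}$ for the optimal values of the true and empirical augmented MDPs.

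\textbf{Lower side.} Here we need $\tilde V^\theta_0(s,0)-\hat V^\theta_{0,n}(s,0)\le \epsilon$. Apply Lemma \ref{lemma_close_to_opt_lip_policy_mdp} with accuracy $\epsilon/2$. It gives a data-independent softmax policy $\pi$ that is $\epsilon/2$-optimal in $\tilde{\cM}_\theta$ and has Lipschitz constant $L_\pi(\epsilon/2)=\cO(T^2L_C\log(T^2|\cA|L_C/\epsilon)/\epsilon)$. Since $\hat V^\theta_{0,n}\le \hat V^{\pi,\theta}_{0,n}$, we get
\[
\tilde V^\theta_0-\hat V^\theta_{0,n}\le \tfrac{\epsilon}{2}+\bigl(\tilde V^{\pi,\theta}_0-\hat V^{\pi,\theta}_{0,n}\bigr).
\]
Because $\pi$ does not depend on the samples, the last term is controlled by the fixed-$\theta$ argument of Theorem \ref{thrm_sample_fin_mdpite_eval}, at cost $n=\cO(T^2L_C^2\epsilon^{-2}\log(\cdot))$. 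The value of $L_\pi(\epsilon/2)$ enters only inside the logarithm, which explains the $\log(T^2|\cA|L_C/\epsilon)$ factor in the statement.

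\textbf{Upper side.} Here we need $\hat V^\theta_{0,n}-\tilde V^\theta_0\le\epsilon$, which involves the data-dependent optimal empirical policy. That policy is deterministic and not Lipschitz, so I would again smooth it. Apply the construction of Lemma \ref{lemma_close_to_opt_lip_policy_mdp} to the empirical MDP, using the empirical $Q$-function; Lemma \ref{lemma_fin_mdp_pol_opt_val_lip} gives the same Lipschitz bound for it. This yields a sample-dependent $\hat\pi$ with $\hat V^{\hat\pi,\theta}_{0,n}\le \hat V^\theta_{0,n}+\epsilon/4$ and Lipschitz constant $L_\pi(\epsilon/4)$. Then
\[
\hat V^\theta_{0,n}-\tilde V^\theta_0\le \tfrac{\epsilon}{4}+\bigl(\tilde V^{\hat\pi,\theta}_0-\hat V^{\hat\pi,\theta}_{0,n}\bigr)\quad\text{in absolute value}.
\]
We bound this through Lemma \ref{lemma_fin_mdp_pol_opt_bern_rad}, taking the net width $\eta$ polynomially small in $\epsilon/(T^3 L_C L_\pi)$. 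With $L_0^\pi=\cO(T^2L_CL_\pi)$, this makes the discretization terms $4TL_0^\pi\eta+\sqrt{8T^3L_CL_0^\pi\eta\log(\cdot)/n}$ at most $\epsilon/4$, and it produces the $\epsilon^{-3}$ and $T^7L_C^3$ factors in the logarithm.

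\textbf{Main obstacle: the Bernstein sum.} It remains to bound $\EE^{\hat\pi}\bigl[\sum_h b^{\hat\pi,\theta}_{h,n}\bigr]$, and we cannot use the martingale total-variance identity directly. Using Cauchy–Schwarz across the $T$ stages,
\[
\EE\Bigl[\tsum_h\sqrt{\Var}\Bigr]\le\sqrt{T\,\EE\bigl[\tsum_h \Var_{\tilde{\PP}_h}(\hat V^{\hat\pi,\theta}_{h+1,n})\bigr]}.
\]
Then apply $\Var(Y+Z)\le2\Var(Y)+2\EE Z^2$, splitting $\hat V^{\hat\pi,\theta}_{h+1,n}=\tilde V^{\hat\pi,\theta}_{h+1}+(\hat V^{\hat\pi,\theta}_{h+1,n}-\tilde V^{\hat\pi,\theta}_{h+1})$. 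The total-variance bound of Lemma \ref{lemma_fin_mdp_pol_eval_bell_tot_var} holds for any policy, including $\hat\pi$, so the first part contributes $(T-t)^2L_C^2$. This gives
\[
\cB\le \sqrt{\tfrac{2T(2T^2L_C^2+2T\cB^2)\log}{n}}+\tfrac{2T^2L_C\log}{3n}+\tfrac{\epsilon}{4},
\]
where $\cB$ is the sup error as in \eqref{fin_mdp_pol_eval_sup_def}. For $n\gtrsim T^2\log$ the $\cB$ term on the right can be absorbed, leaving $\cB\lesssim T^{3/2}L_C\sqrt{\log/n}+\epsilon/4$. This is where the extra factor of $T$ (giving $T^3$) appears. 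I expect this self-bounding step, done uniformly while $\hat\pi$ depends on the data, to be the delicate part. The union bound over the net in Lemma \ref{lemma_fin_mdp_pol_opt_bern_rad} is what makes it legitimate, because the bound there holds simultaneously for all Lipschitz policies.

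\textbf{Uniformity in $\theta$.} Take an $\epsilon/(8L_\Theta)$-net of $\Theta$ and union bound, paying $d_\Theta\log(L_\Theta R_\Theta/\epsilon)$ inside the logarithm. Extend to all $\theta$ using Assumption \ref{assump_basic}, which also covers the $f_\theta(0)$ offsets. Since $|\min_\theta a_\theta-\min_\theta b_\theta|\le\sup_\theta|a_\theta-b_\theta|$, this gives $|\hat\cR^*_n-\cR^*|\le\epsilon$ simultaneously for all $s\in\cS_0$.
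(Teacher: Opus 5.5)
Your architecture is the paper's: two softmax-smoothed policies from Lemma \ref{lemma_close_to_opt_lip_policy_mdp}, one built from the true $Q$-function and one from the empirical one. The first is handled by Theorem \ref{thrm_sample_fin_mdpite_eval}. The second is handled by Lemma \ref{lemma_fin_mdp_pol_opt_bern_rad}, Cauchy--Schwarz, the variance split, the total-variance bound and self-bounding for $n\gtrsim T^2\log$. A covering of $\Theta$ finishes. However, you attach the two one-sided reductions to the wrong sides, and each displayed inequality is false as written.

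From $\tilde V^{\pi,\theta}_0\le\tilde V^\theta_0+\epsilon/2$ and $\hat V^\theta_{0,n}\le\hat V^{\pi,\theta}_{0,n}$, the data-independent $\pi$ only gives
\[
\hat V^\theta_{0,n}-\tilde V^\theta_0\le \tfrac{\epsilon}{2}+\bigl(\hat V^{\pi,\theta}_{0,n}-\tilde V^{\pi,\theta}_0\bigr).
\]
This is an upper bound on $\hat V-\tilde V$. Your claimed bound on $\tilde V^\theta_0-\hat V^\theta_{0,n}$ fails whenever the empirical MDP has some policy far better than $\pi$, even if $\pi$ itself is evaluated exactly. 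Symmetrically, $\hat V^{\hat\pi,\theta}_{0,n}\le\hat V^\theta_{0,n}+\epsilon/4$ and $\tilde V^\theta_0\le\tilde V^{\hat\pi,\theta}_0$ give
\[
\tilde V^\theta_0-\hat V^\theta_{0,n}\le \tfrac{\epsilon}{4}+\bigl(\tilde V^{\hat\pi,\theta}_0-\hat V^{\hat\pi,\theta}_{0,n}\bigr).
\]
They do not give a bound on $\hat V^\theta_{0,n}-\tilde V^\theta_0$, because nothing bounds $\tilde V^\theta_0$ from below in terms of $\hat\pi$. These are exactly \eqref{ineq_opt_err_at_true_policy} and \eqref{ineq_opt_err_at_emp_policy} (equivalently the sandwich \eqref{ineq_policy_opt_mdp_basic}). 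The true-model near-optimal policy caps the empirical optimum from above, and the empirical near-optimal policy caps it from below. You bound both evaluation errors in absolute value, so swapping the two goals repairs the argument at no cost. After that your proof coincides with the paper's.

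A smaller point concerns why Lemma \ref{lemma_fin_mdp_pol_opt_bern_rad} applies to the data-dependent $\hat\pi$. The reason you give is not right. The $\eta$-net there covers the augmented variable $x$, not a class of policies, so it yields no uniformity over Lipschitz policies. What makes the stage-$t$ Bernstein step valid is that $\hat V^{\hat\pi,\theta}_{t+1,n}$ depends only on samples from stages $t+1,\dots,T-1$. This holds because the empirical softmax policy at stage $h$ is built from $\hat{\PP}_{h,n},\dots,\hat{\PP}_{T-1,n}$. Hence that function is independent of the stage-$t$ samples it is tested against. An arbitrary sample-dependent Lipschitz policy would not have this property.
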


\begin{proof}
Let us first fix $\theta \in \Theta$.  From Lemma \ref{lemma_close_to_opt_lip_policy_mdp}, there exists an $L_\pi({\epsilon})$-Lipschitz policy $\pi_\epsilon^\theta$ (resp. $\hat{\pi}_\epsilon^\theta$),  such that
\begin{align}\label{ineq_choice_epsilon_opt_policy}
    \tilde{V}^{\pi_\epsilon^\theta, \theta}_t(s,x) -  \tilde{V}^{ \theta}_t(s,x)  \leq \frac{\epsilon}{4}, ~~~
    \hat{V}^{\hat{\pi}_\epsilon^\theta, \theta}_{t,n}(s,x) -  \hat{V}^{ \theta}_{t,n}(s,x)  \leq \frac{\epsilon}{4},
\end{align}
where $L_\pi(\epsilon)$ is defined as in \eqref{lipschitz_const_approx_opt}.
Consequently, we have
\begin{align}
    \hat{V}_{t,n}^{\theta}(s,x) - \tilde{V}_t^{\theta}(s,x) & \geq (\hat{V}_{t,n}^{\hat{\pi}_\epsilon^\theta, \theta}(s,x) - \frac{\epsilon}{4}) - \tilde{V}_t^{\hat{\pi}_\epsilon^\theta, \theta}(s,x),  \label{ineq_opt_err_at_emp_policy} \\
    \hat{V}_{t,n}^{\theta}(s,x) - \tilde{V}_t^{\theta}(s,x) & \leq \hat{V}_{t,n}^{\pi_\epsilon^\theta, \theta}(s,x) - (\tilde{V}_t^{\pi_\epsilon^\theta, \theta}(s,x) - \frac{\epsilon}{4}). \label{ineq_opt_err_at_true_policy}
\end{align}
Since $\pi_\epsilon^\theta$ does not depend on the samples, it follows   from Lemma \ref{lemma_close_to_opt_lip_policy_mdp},  \eqref{ineq_opt_err_at_true_policy} and  Theorem \ref{thrm_sample_fin_mdpite_eval} that 
\begin{align}\label{ineq_opt_sample_pi_true}
 \hat{V}_{t,n}^{\theta}(s,x) - \tilde{V}_t^{\theta}(s,x) \leq    \hat{V}_{t,n}^{\pi_\epsilon^\theta, \theta}(s,x) - \tilde{V}_t^{\pi_\epsilon^\theta, \theta}(s,x)  + \frac{\epsilon}{4}
    \leq  \frac{\epsilon}{2},
\end{align}
provided 
$n 
= \cO \rbr{
\tfrac{T^2L_C^2}{\epsilon^2}\log \left(\tfrac{T^4 |\cS| L_C L_\pi(\epsilon) (L_\Theta R_\Theta /\epsilon)^{d_\Theta}}{\delta \epsilon}\right)
}
$.

It remains to control 
$\hat{V}_{t,n}^{\hat{\pi}_\epsilon^\theta, \theta}(s,x) - \tilde{V}_t^{\hat{\pi}_\epsilon^\theta,\theta}(s,x)$
and subsequently \eqref{ineq_opt_err_at_emp_policy}. For a given $\eta > 0$ to be specified later, let $\cN_t$ be an $\eta$-net of $[0,t]$ for $0 \leq t \leq T$. Going forward, let us denote $\pi = \hat{\pi}_\epsilon^\theta$. From Lemma \ref{lemma_fin_mdp_pol_opt_bern_rad}, we obtain
\begin{align}
    \abs{ (\hat{V}_{t,n}^{\pi, \theta} - \tilde{V}_t^{\pi, \theta})(s,x)} \leq & \EE_{\tilde \PP}^\pi \left[\left. \tsum_{h=t}^{T-1}b_{h,n}^{\pi, \theta}(S_h, X_h, A_h)\right| (S_t, X_t) = (s,x) \right] \nonumber  \\
    & + 4TL_{0}^\pi \eta + \sqrt{\tfrac{8T^3L_C L_{0}^\pi(\epsilon) \eta}{n} \log(\tfrac{2}{\delta'}}),
    \label{fin_mdp_pol_opt_gen_bound}
    \end{align}
where $\delta' = \tfrac{\delta \eta}{T^2 |\cS| |\cA|}$,
and $L^\pi_t(\epsilon)$ is defined as in Lemma \ref{lemma_fin_mdp_pol_eval_val_lip}, with $L_\pi$ replaced by $L_\pi({\epsilon})$ therein.
We proceed to control $b^{\pi, \theta}_{h,n}$ above. 
From the Cauchy-Schwarz inequality, it follows that
\begin{align}
     & \EE_{\tilde \PP}^\pi\left[ \left. \tsum_{h=t}^{T-1} \sqrt{\tfrac{2}{n} \Var_{\tilde{\PP}_{h}(\cdot|S_h, X_h, A_h)}(\hat{V}_{h+1,n}^{\pi, \theta}) \log(\tfrac{2}{\delta'})} \right| (S_t, X_t) = (s,x)\right] \nonumber \\
    \leq & \sqrt{\tfrac{2T}{n} \log(\tfrac{2}{\delta'}) \EE_{\tilde \PP}^\pi \left[ \left. \tsum_{h=t}^{T-1} \Var_{\tilde{\PP}_{h}(\cdot|S_h, X_h, A_h)}(\hat{V}_{h+1,n}^{\pi, \theta}) \right| (S_t, X_t) = (s,x)\right] }. \label{fin_mdp_pol_opt_bern_rad_fir_term}
\end{align}
Following the same argument as in  \eqref{fin_mdp_pol_eval_var_gen_bound}, we have
\begin{align}
    \EE_{\tilde \PP}^\pi \left[ \left. \tsum_{h=t}^{T-1} \Var_{\tilde{\PP}_{h}(\cdot|S_h, X_h, A_h)}(\hat{V}_{h+1,n}^{\pi, \theta}) \right| (S_t, X_t) = (s,x)\right] \leq 2(T-t)^2 L_C^2 + 2(T-t) (\cB^{\pi,\theta}_n)^2, \label{fin_mdp_pol_opt_bern_rad_fir_term_2}
\end{align}
where  $\cB^{\pi,\theta}_n$ is defined as in \eqref{fin_mdp_pol_eval_sup_def}. 
Combining \eqref{fin_mdp_pol_opt_bern_rad_fir_term}, \eqref{fin_mdp_pol_opt_bern_rad_fir_term_2}, and the definition of $b^{\pi,\theta}_{h,n}$ in \eqref{eq_def_local_bernstein}, we have 
\begin{align*}
    \EE_{\tilde \PP}^\pi \left[\left. \tsum_{h=t}^{T-1}b_{h,n}^{\pi, \theta}(S_h, X_h, A_h)\right| (S_t, X_t) = (s,x) \right] 
    \leq 
    \sqrt{\tfrac{2  T (2T^2 L_C^2 + 2T (\cB^{\pi,\theta}_n)^2) }{n} \log(\tfrac{2}{\delta'})}
    + \tfrac{2T^2 L_C }{3n}\log(\tfrac{2}{\delta'}).
\end{align*}
The above observation, together with \eqref{fin_mdp_pol_opt_gen_bound},
implies 
\begin{align}
    \cB^{\pi,\theta}_n & \leq \sqrt{\tfrac{2  T (2T^2 L_C^2 + 2T (\cB^{\pi,\theta}_n)^2) }{n} \log(\tfrac{2}{\delta'})} + \tfrac{2T^2 L_C }{3n}\log(\tfrac{2}{\delta'}) + 4 T L_0^\pi(\epsilon) \eta + \sqrt{\tfrac{8T^3 L_C L_0^\pi(\epsilon) \eta }{n} \log(\tfrac{2}{\delta'})}. \label{fin_mdp_pol_opt_B_final_ineq}
\end{align}
Suppose $n \geq 16 T^2 \log(\tfrac{2}{\delta'})$, then a simple rearrangement of the above inequality yields
\begin{align*}
    \cB^{\pi,\theta}_n & \leq 4 T^{3/2} L_C\sqrt{\tfrac{1}{n} \log (\tfrac{2}{\delta'})} + \tfrac{4T^2 L_C }{3n} \log(\tfrac{2}{\delta'}) + 8 T L_0^\pi \eta + \sqrt{2 T L_C L_0^\pi (\epsilon) \eta}.
\end{align*}
Thus, in order to ensure $\cB^{\pi,\theta}_n \leq \tfrac{\epsilon}{2}$, it suffices to choose $\eta = \tfrac{\epsilon^2}{32 T L_C L_0^\pi (\epsilon)}$ and
\begin{align}
    n  = \cO\left(\tfrac{T^3 L_C^2}{\epsilon^2} \log \left(\tfrac{1}{\delta'} \right)\right) \overset{(a)}{=} \cO \left( \tfrac{T^3 L_C^2}{\epsilon^2} \log \left(\tfrac{T^5 |\cS| |\cA| L_C^2 L_\pi (\epsilon)}{\delta \epsilon^2} \right) \right)
\overset{(b)}{=} \cO \left( \tfrac{T^3 L_C^2}{\epsilon^2} \log \left(\tfrac{T^7 |\cS| |\cA| L_C^3 \log(T^2|\cA|L_C/\epsilon)}{\delta \epsilon^3} \right) \right)
    \nonumber ,
\end{align}
where $(a)$ uses the definition of $\delta'$ and $\eta$, together with Lemma \ref{lemma_fin_mdp_pol_eval_val_lip},
which implies $L_0^\pi(\epsilon) = \cO(T^2L_CL_\pi(\epsilon))$,
and $(b)$ follows from the definition of $L_\pi(\epsilon)$ in \eqref{lipschitz_const_approx_opt}.
To conclude our proof, it suffices to control $\cB^{\pi,\theta}_n$ uniformly over $\theta \in \Theta$,
 and this follows from  the same covering argument as in Theorem \ref{thrm_sample_fin_mdpite_eval}.
\end{proof}

In view of Theorem \ref{thrm_sample_fin_mdpite_opt}, the sample complexity for estimating the optimal risk $\cR^*$ up to an $\epsilon$-accuracy can be bounded by $\tilde{\cO}({\frac{T^4 \abs{\cS} \abs{\cA} L_C^2 d_\Theta}{\epsilon^2} \log(\tfrac{L_\Theta R_\Theta}{\delta \epsilon}) } )$.
Clearly, when $\cR^\pi(\cdot) = \EE^\pi \sbr{\cdot}$, one can take $L_C = 1$ and $f_\theta(z) = z$ in \eqref{value_given_theta_finite_horizon_mdp}, and the obtained sample complexity recovers the optimal sample complexity for policy optimization in risk-neutral MDPs. 
We will further discuss the application of 
Theorem \ref{thrm_sample_fin_mdpite_opt} when $\cR^\pi$ corresponds to the distributionally robust functional induced by $\phi$-divergences.

\begin{remark}
    It is also worth noting that in addition to estimating the optimal risk $\cR^*$,  Theorem \ref{thrm_sample_fin_mdpite_opt} also produces an $(\frac{3\epsilon}{2})$-optimal policy for the policy optimization problem \eqref{eq_risk_finite_horizon_mdp}.
In particular, we have 
\begin{align*}
   \tilde{V}_t^{\hat{\pi}_\epsilon^\theta, \theta}(s,x)      \overset{(a)}{\leq} \hat{V}_{t,n}^{\hat{\pi}_\epsilon^\theta, \theta}(s,x) +  \cB^{\hat{\pi}_\epsilon^\theta,\theta}_n   \overset{(b)}{\leq} \hat{V}_{t,n}^{\theta}(s,x) + \frac{\epsilon}{4} + \cB^{\hat{\pi}_\epsilon^\theta,\theta}_n  \overset{(c)}{\leq} \hat{V}_{t,n}^{\pi_\epsilon^\theta, \theta}(s,x) + \frac{\epsilon}{4} + \cB^{\hat{\pi}_\epsilon^\theta,\theta}_n 
\end{align*}
where $(a)$ follows from the definition of $\cB^{\pi \theta}_n$, 
$(b)$ follows from \eqref{ineq_choice_epsilon_opt_policy},
and $(c)$ follows from the definition of $\hat{V}^\theta_{t,n}$. 
Applying \eqref{ineq_choice_epsilon_opt_policy} and  the definition of $\cB^{\pi \theta}_n$ again, we obtain 
\begin{align*}
      \tilde{V}_t^{\hat{\pi}_\epsilon^\theta, \theta}(s,x) 
       \leq \tilde{V}_{t}^{\pi_\epsilon^\theta, \theta}(s,x) + \frac{\epsilon}{4} + \cB^{\hat{\pi}_\epsilon^\theta,\theta}_n + \cB^{\pi_\epsilon^\theta,\theta}_n
      \leq \tilde{V}^\theta_t + \frac{\epsilon}{2} + \cB^{\hat{\pi}_\epsilon^\theta,\theta}_n + \cB^{\pi_\epsilon^\theta,\theta}_n
  \leq \tilde{V}^\theta_t + \frac{3 \epsilon}{2},
\end{align*}
where the last inequality uses $\cB^{\hat{\pi}_\epsilon^\theta ,\theta}_n + \cB^{\pi_\epsilon^\theta ,\theta}_n\leq \epsilon$.
Since the above holds
uniformly for all $\theta \in \Theta$,
from Theorem \ref{cor_dp_optimal_value_finite_mdp}, 
we obtain   
$
\cR^{\hat{\pi}_\epsilon^{\hat{\theta}^*}} (s) \leq \cR^* (s) + \frac{3\epsilon}{2},
$
where 
$\hat{\theta}^* \in \Argmin_{\theta \in \Theta} \tilde{V}_0^{\hat{\pi}_\epsilon^\theta, \theta}(s,0) + f_\theta(0)$.
\end{remark}

%% file: risk_finite_soc.tex

\section{Risk-averse Stochastic Optimal Control}\label{sec_soc}

In this section, we turn our attention to risk-averse SOC problems and show that the approach presented in Section \ref{sec_risk_mdp} can be naturally extended to the SOC model. 
Similar to MDPs, for any SOC instance $\cM$, we use $\Pi(\cM)$ to denote the set of randomized history-dependent policies,
and $\Pi_{\mathrm{MR}}(\cM)$ for the set of randomized Markovian policies.

\subsection{Finite-horizon Risk-averse Stochastic Optimal Control}

Consider a finite-horizon SOC problem 
$\cM = (
\cbr{\cS_t}_{t=0}^T, \cbr{\cA_t}_{t=0}^T, \cbr{P_t}_{t=0}^{T-1},
\cbr{F_t}_{t=0}^{T-1}, \cbr{c_t}_{t=0}^T
)$.
Here $\cS_t \subseteq \RR^{d_\cS}$ denotes the compact state space at stage $t$, 
$\cA_t \subseteq \RR^{d_\cA}$ denotes its corresponding compact control space, 
$c_t: \cS_t \times \cA_t \to [0,1]$ denotes the continuous cost function, and 
$F_t: \cS_t \times \cA_t \times \Xi_t \to \cS_{t+1}$ denotes the continuous transition function.
The transition noise $\xi_t \in \Xi_t$ follows  the distribution $P_t$, and $\cbr{\xi_t}_{t=0}^{T-1}$ are independent.
We further assume that $P_t$ does not depend on the state or action.
For notational convenience, we will occasionally write $P$ as shorthand for $\cbr{P_t}_{t=0}^{T-1}$.
We seek to find the optimal policy of the following risk,
\begin{align}\label{eq_risk_finite_horizon_soc}
    \min_{\pi \in \Pi(\cM)} \cR^\pi(s) \coloneqq
    \cR_{\Theta, f} \rbr{ \tsum_{t=0}^T c_t(S_t, A_t)}
\end{align}
where the data process $\cbr{(S_t,A_t)}_{t=0}^T$ is generated by 
$S_0=s$, $A_t \sim \pi(\cdot|H_t)$, and 
$S_{t+1}=F_t(S_t,A_t,\xi_t)$, with $H_t=\xi_{[t-1]}$ denoting the history up to stage $t$.
In view of the definition of $\cR_{\Theta,f}$, we have
\begin{align}\label{value_given_theta_finite_horizon_soc}
    \cR^\pi(s)
    =
    \min_{\theta \in \Theta} \cbr{
    V_0^{\pi,\theta}(s)
    \coloneqq
    \EE^\pi_P \sbr{
    f_\theta \rbr{\tsum_{t=0}^T c_t(S_t,A_t)}
    \mid S_0=s
    }}.
\end{align}
It is then clear that \eqref{eq_risk_finite_horizon_soc} is equivalent to
\begin{align}\label{finite_horizon_minmin_soc}
    \min_{\theta \in \Theta} \min_{\pi \in \Pi(\cM)} V_0^{\pi,\theta}(s).
\end{align}

As in the MDP setting, the nonlinearity of $f_\theta$ in \eqref{value_given_theta_finite_horizon_soc} prevents the direct application of the standard dynamic programming equations. 
Similar to Section \ref{sec_finite_mdp}, we next introduce an augmented SOC problem that tracks the accumulated total cost through an additional state variable.

\begin{definition}[Finite-horizon augmented SOC]\label{def_aug_soc_finite_horizon}
    For any $\theta \in \Theta$, the augmented SOC $\tilde{\cM}_\theta$ is defined as follows. 
    The state space is $\tilde{\cS}_t = \cS_t \times \cX_t$, where $\cX_t = [0,t]$, and the action space is $\tilde{\cA}_t = \cA_t$.
    The cost function and the transition function are defined by
\begin{align}\label{cost_transition_aug_soc_finite}
\tilde{c}_t^\theta(s,x,a)
&= f_\theta({x+c_t(s,a)})-f_\theta(x), \nonumber \\
(s',x') &= \tilde{F}_t(s,x,a,\xi),
\end{align}
where $\tilde{F}_t: \cS_t \times \cX_t \times \cA_t \times \Xi_t \to \cS_{t+1} \times \cX_{t+1}$ is defined through
\begin{align*}
    s' = F_t(s,a,\xi), 
    ~~~
    x' = x+c_t(s,a).
\end{align*}
\end{definition}

In view of \eqref{cost_transition_aug_soc_finite} in Definition \ref{def_aug_soc_finite_horizon}, the augmented SOC $\tilde{\cM}_\theta$ tracks the accumulated total cost through the augmented state variable $X_t \in \cX_t$ at every stage $t$.
Given this, the set of history-dependent, randomized policies for $\tilde{\cM}_\theta$ is also $\Pi(\cM)$.
For any policy $\pi \in \Pi(\cM)$ and any $\theta \in \Theta$, we define the value function
\begin{align}\label{eq_value_given_pi_aug_soc}
    \tilde{V}^{\pi,\theta}_0(s,x)
    =
    \EE^\pi_P \sbr{
    \tsum_{t=0}^T \tilde{c}_t^\theta(S_t,X_t,A_t)
    \mid S_0=s, X_0=x
    }.
\end{align}

With Definition \ref{def_aug_soc_finite_horizon} in place, we make the following observation by an argument similar to that of Proposition~\ref{prop_reform_fin_mdpite}.

\begin{proposition}\label{prop_reform_finite_soc}
    We have for any $\pi \in \Pi(\cM)$,
    \begin{align*}
        V_0^{\pi,\theta}(s)
        =
        \tilde{V}^{\pi,\theta}_0(s,0) + f_\theta(0),
        \qquad \forall s \in \cS_0.
    \end{align*}
\end{proposition}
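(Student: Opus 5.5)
The plan is to mirror the proof of Proposition~\ref{prop_reform_fin_mdpite}: the cost $\tilde{c}_t^\theta$ telescopes along every trajectory, so the identity already holds pathwise and no expectation or dynamic-programming argument is needed beyond coupling the two processes.

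\emph{Coupling.} Fix $\pi \in \Pi(\cM)$ and a realization of the noise sequence $\xi_0, \ldots, \xi_{T-1}$, together with the randomization of the actions. Start the augmented SOC $\tilde{\cM}_\theta$ from $(S_0, X_0) = (s, 0)$ and run it under $\pi$. By Definition~\ref{def_aug_soc_finite_horizon}, the $s$-component evolves as $S_{t+1} = F_t(S_t, A_t, \xi_t)$. This is the same recursion as in the original SOC. Since the history $H_t = \xi_{[t-1]}$ is common to both models, the action laws $\pi(\cdot|H_t)$ coincide. Hence $\{(S_t, A_t)\}_{t=0}^T$ has the same joint law in $\tilde{\cM}_\theta$ and in $\cM$.

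\emph{Identifying $X_t$.} The $x$-component satisfies $X_{t+1} = X_t + c_t(S_t, A_t)$ with $X_0 = 0$. By induction on $t$, $X_t = \sum_{h=0}^{t-1} c_h(S_h, A_h)$, and this lies in $[0,t] = \cX_t$ because $c_h \in [0,1]$. So the augmented state is well defined on $\cX_t$.

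\emph{Telescoping.} Setting $X_{T+1} \coloneqq X_T + c_T(S_T, A_T) = \sum_{t=0}^T c_t(S_t, A_t)$, we get pathwise
\begin{align*}
\tsum_{t=0}^T \tilde{c}_t^\theta(S_t, X_t, A_t) = \tsum_{t=0}^T \bigl(f_\theta(X_{t+1}) - f_\theta(X_t)\bigr) = f_\theta\bigl(\tsum_{t=0}^T c_t(S_t, A_t)\bigr) - f_\theta(0).
\end{align*}
Taking expectations under $\EE^\pi_P$ conditional on $S_0 = s$, $X_0 = 0$, and using \eqref{eq_value_given_pi_aug_soc} and \eqref{value_given_theta_finite_horizon_soc}, gives $\tilde{V}_0^{\pi,\theta}(s,0) = V_0^{\pi,\theta}(s) - f_\theta(0)$, which is the claim.

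\emph{Main obstacle.} The only point needing care is measure-theoretic, since the state spaces are now continuous. Both sides must be well defined and finite. This holds because $f_\theta$ is continuous on $[0,T+1]$ by Assumption~\ref{assump_basic}, so it is bounded there, and because $F_t$ and $c_t$ are continuous and hence measurable. Everything else carries over verbatim from the MDP case.
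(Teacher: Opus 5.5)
Your proof is correct and follows the paper's argument. The paper proves this exactly as in Proposition~\ref{prop_reform_fin_mdpite}: the costs $\tilde{c}_t^\theta$ telescope along trajectories on which $X_t$ equals the accumulated cost, and one then takes expectations. Your explicit coupling of the two processes and your boundedness remark via Assumption~\ref{assump_basic} simply spell out details the paper leaves implicit.
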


In view of Proposition \ref{prop_reform_finite_soc}, one can immediately obtain the following dynamic programming equations for the inner minimization of \eqref{finite_horizon_minmin_soc} through the augmented SOC $\tilde{\cM}_\theta$.

\begin{proposition}\label{thrm_dp_fix_theta_finite_soc}
    Define the optimal cost-to-go function in the augmented SOC $\tilde{\cM}_\theta$ as
    \begin{align*}
        \tilde{V}_t^\theta(s,x)
        =
        \min_{\pi \in \Pi(\cM)}
        \EE^\pi_P
        \sbr{
        \tsum_{h=t}^T \tilde{c}_h^\theta(S_h,X_h,A_h)
        \mid S_t=s, X_t=x
        }.
    \end{align*}
    Then we have
    $\tilde{V}_T^\theta(s,x)=\min_{a \in \cA_T}\tilde{c}_T^\theta(s,x,a)$ for $(s,x)\in \cS_T\times \cX_T$, and
    \begin{align}\label{dp_val_give_theta_finite_soc}
        \tilde{V}_t^\theta(s,x)
        =
        \min_{a \in \cA_t}
        \tilde{c}_t^\theta(s,x,a)
        +
        \EE_{\xi \sim P_t}
        \sbr{
        \tilde{V}_{t+1}^\theta\bigl(\tilde{F}_t(s,x,a,\xi)\bigr)
        },
        \quad \forall (s,x)\in \cS_t \times \cX_t,
    \end{align}
    for any $0 \leq t \leq T-1$.
    In addition, the policy $\pi^\theta$ defined as
    \begin{align}\label{opt_pi_given_theta_finite_soc}
        \pi^\theta_t(s,x)
        \in
        \Argmin_{a \in \cA_t}
        \cbr{
        \tilde{c}_t^\theta(s,x,a)
        +
        \EE_{\xi \sim P_t}
        \sbr{
        \tilde{V}_{t+1}^\theta\bigl(\tilde{F}_t(s,x,a,\xi)\bigr)
        }}
    \end{align}
    is an optimal policy for $\min_{\pi \in \Pi(\cM)} V^{\pi,\theta}_0(s)$.
\end{proposition}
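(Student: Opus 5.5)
The plan is to mirror the proof of Proposition \ref{thrm_dp_fix_theta_finite}, the MDP analogue. The extra work is the measure-theoretic bookkeeping needed because $\cS_t$ and $\cA_t$ are now compact subsets of Euclidean space. I would proceed by backward induction on $t$ in the augmented SOC $\tilde{\cM}_\theta$, and carry along the additional claim that each $\tilde{V}_t^\theta$ defined by the recursion \eqref{dp_val_give_theta_finite_soc} is continuous on $\cS_t \times \cX_t$.

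\textbf{Step 1: base case.} At $t = T$, the map $(s,x,a) \mapsto \tilde{c}_T^\theta(s,x,a) = f_\theta(x + c_T(s,a)) - f_\theta(x)$ is continuous. This uses continuity of $c_T$ and Lipschitz continuity of $f_\theta$ on $[0,T+1]$ (Assumption \ref{assump_basic}). Since $\cA_T$ is compact, the minimum over $a$ is attained. By Berge's maximum theorem, $\min_a \tilde{c}_T^\theta$ is continuous in $(s,x)$. A measurable selection of minimizers also exists, for instance by the measurable selection theorem for continuous functions on compact sets.

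\textbf{Step 2: inductive step.} Assume $\tilde{V}_{t+1}^\theta$ is continuous and bounded, with bound $(T-t)L_C$ as in the MDP case. The map $(s,x,a) \mapsto \EE_{\xi\sim P_t}[\tilde{V}_{t+1}^\theta(\tilde{F}_t(s,x,a,\xi))]$ is then continuous. This follows from dominated convergence, using continuity of $F_t$ and $c_t$ and boundedness of $\tilde V_{t+1}^\theta$. Hence the Q-function in \eqref{opt_pi_given_theta_finite_soc} is continuous on the compact set $\cS_t\times\cX_t\times\cA_t$. Its minimum over $a$ is attained and continuous in $(s,x)$, and a Borel measurable minimizer $\pi_t^\theta$ exists.

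\textbf{Step 3: identify the recursion with the optimal cost-to-go.} I need to show the recursion equals the optimal cost-to-go over all history-dependent randomized policies $\Pi(\cM)$. For any $\pi\in\Pi(\cM)$, I condition on $H_t$ and on $(S_t,X_t)$, using that $\xi_t$ is independent of the history and that $(S_{t+1},X_{t+1}) = \tilde F_t(S_t,X_t,A_t,\xi_t)$. The induction then gives that the expected cost-to-go from $t$ is at least $\tilde V_t^\theta(S_t,X_t)$. The Markov policy $\pi^\theta$ attains equality step by step.

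\textbf{Step 4: transfer to the original problem.} By Proposition \ref{prop_reform_finite_soc}, $V_0^{\pi,\theta}(s) = \tilde V_0^{\pi,\theta}(s,0) + f_\theta(0)$ for every $\pi \in \Pi(\cM)$. Therefore minimizing over $\pi$ in $\tilde{\cM}_\theta$ is equivalent to minimizing $V_0^{\pi,\theta}$, and $\pi^\theta$ is optimal for $\min_{\pi\in\Pi(\cM)} V_0^{\pi,\theta}(s)$.

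The main obstacle is Step 2, namely the measurability and attainment issues absent in the finite MDP case. These are handled by the compactness and continuity assumptions through Berge's theorem together with a measurable selection.
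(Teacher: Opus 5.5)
Your proposal is correct and follows the same route as the paper: the paper also obtains the recursion from standard backward-induction dynamic programming on the augmented SOC $\tilde{\cM}_\theta$, then uses Proposition \ref{prop_reform_finite_soc} to transfer optimality of $\pi^\theta$ to $\min_{\pi \in \Pi(\cM)} V_0^{\pi,\theta}(s)$. The paper simply asserts the DP step as standard, whereas you also supply the details the continuous state/action setting requires: continuity by dominated convergence, attainment of the minimum via Berge's theorem, and a measurable selection of minimizers.
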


\begin{proof}
    The first part of the claim \eqref{dp_val_give_theta_finite_soc} is a direct consequence of dynamic equations applied to the augmented SOC $\tilde{\cM}_\theta$. 
    The rest of the claim follows from Proposition \ref{prop_reform_finite_soc}.
\end{proof}

Given Proposition \ref{thrm_dp_fix_theta_finite_soc}, we can obtain the dynamic equations for \eqref{eq_risk_finite_horizon_soc} as follows.

\begin{theorem}\label{cor_dp_optimal_value_finite_soc}
Let $\theta^*$ be an optimal solution of \eqref{finite_horizon_minmin_soc}.
Consider $\{\tilde{V}^{\theta^*}_t\}_{t=0}^T$ defined as in \eqref{dp_val_give_theta_finite_soc}
and $\pi^{\theta^*}$ defined in \eqref{opt_pi_given_theta_finite_soc}. 
Then we have
\begin{align*}
    \min_{\pi \in \Pi(\cM)} \cR^\pi(s)
    =
    \tilde{V}^{\theta^*}_0(s,0) + f_{\theta^*}(0),
\end{align*}
and $\pi^{\theta^*}$ is an optimal policy.
\end{theorem}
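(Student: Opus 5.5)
The proof combines the min--min reformulation \eqref{finite_horizon_minmin_soc} with Proposition \ref{prop_reform_finite_soc} and Proposition \ref{thrm_dp_fix_theta_finite_soc}, in the same way as Theorem \ref{cor_dp_optimal_value_finite_mdp} for MDPs. First, for every fixed $\theta \in \Theta$, Proposition \ref{prop_reform_finite_soc} gives $V_0^{\pi,\theta}(s) = \tilde{V}_0^{\pi,\theta}(s,0) + f_\theta(0)$ for all $\pi \in \Pi(\cM)$. Since $f_\theta(0)$ does not depend on $\pi$, it follows that
\begin{align*}
\min_{\pi \in \Pi(\cM)} V_0^{\pi,\theta}(s) = \min_{\pi \in \Pi(\cM)} \tilde{V}_0^{\pi,\theta}(s,0) + f_\theta(0) = \tilde{V}_0^\theta(s,0) + f_\theta(0),
\end{align*}
where the last equality uses the definition of $\tilde{V}_0^\theta$ in Proposition \ref{thrm_dp_fix_theta_finite_soc}. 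That proposition also shows that the minimum is attained by $\pi^\theta$ from \eqref{opt_pi_given_theta_finite_soc}, and that $\tilde{V}_t^\theta$ satisfies the recursion \eqref{dp_val_give_theta_finite_soc}.

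Next, we exchange the two minimizations. Because $\theta^*$ solves \eqref{finite_horizon_minmin_soc}, we have $\min_{\pi}\cR^\pi(s) = \min_\theta \min_\pi V_0^{\pi,\theta}(s) = \min_\pi V_0^{\pi,\theta^*}(s)$, and the first step turns this into $\tilde{V}^{\theta^*}_0(s,0) + f_{\theta^*}(0)$. For optimality of $\pi^{\theta^*}$, we use \eqref{value_given_theta_finite_horizon_soc}: since $\cR^\pi(s)$ is a minimum over $\theta$,
\begin{align*}
\cR^{\pi^{\theta^*}}(s) \le V_0^{\pi^{\theta^*},\theta^*}(s) = \tilde{V}_0^{\theta^*}(s,0) + f_{\theta^*}(0) = \min_\pi \cR^\pi(s).
\end{align*}
The reverse inequality holds trivially, so $\pi^{\theta^*}$ is optimal.

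The main obstacle is measure-theoretic, because the state and control spaces are now continuous. Proposition \ref{thrm_dp_fix_theta_finite_soc} needs the minimum in \eqref{dp_val_give_theta_finite_soc} to be attained and needs a measurable selector $\pi^\theta_t$, so that $\pi^{\theta^*}$ is an admissible policy in $\Pi(\cM)$. I would argue by backward induction that $\tilde{V}_t^\theta$ is continuous on the compact set $\cS_t \times \cX_t$:
\begin{enumerate}[label=(\roman*)]
\item $\tilde{c}_t^\theta$ is continuous, because $c_t$ and $f_\theta$ are continuous (the latter by Assumption \ref{assump_basic}), and $\tilde{F}_t$ is continuous.
\item By dominated convergence, $\EE_{\xi}[\tilde{V}^\theta_{t+1}(\tilde{F}_t(\cdot,\xi))]$ is continuous in $(s,x,a)$; domination holds because all the functions involved are bounded.
\item Minimizing a continuous function over the compact set $\cA_t$ yields a continuous value function, and a measurable minimizer exists by a standard measurable selection theorem.
\end{enumerate}
Since the paper treats Proposition \ref{thrm_dp_fix_theta_finite_soc} as given, these points are already implicit in it, and the theorem follows directly. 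I would also note that the existence of $\theta^*$ is assumed in the statement itself.
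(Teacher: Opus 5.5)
Your proof is correct and takes the same route as the paper. The paper gives no separate proof; it presents the theorem as an immediate consequence of Proposition~\ref{prop_reform_finite_soc}, Proposition~\ref{thrm_dp_fix_theta_finite_soc}, and the min--min reformulation \eqref{finite_horizon_minmin_soc}, which is exactly the argument you write out, and your continuity and measurable-selection remarks only make explicit what the paper already assumes by taking Proposition~\ref{thrm_dp_fix_theta_finite_soc} as given.
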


In view of Theorem \ref{cor_dp_optimal_value_finite_soc}, the optimal policy $\pi^*$ for \eqref{eq_risk_finite_horizon_soc} is non-randomized and depends on the history only through the current state and the accumulated cost.
We now turn our attention to evaluating $\cR^\pi(s)$ for any given policy $\pi$.
Following a similar argument as before, we proceed to establish the dynamic programming equations of $\cR^\pi(s)$ for any $\pi \in \Pi_{\mathrm{MR}}(\tilde{\cM}_\theta)$.

\begin{theorem}\label{thrm_eval_soc_dp_finite}
    For any $\theta \in \Theta$ and $\pi \in \Pi_{\mathrm{MR}}(\tilde{\cM}_\theta)$, define the value function of $\pi$ in $\tilde{\cM}_\theta$ as
    \begin{align*}
        \tilde{V}_t^{\pi,\theta}(s,x)
        =
        \EE^\pi_P
        \sbr{
        \tsum_{h=t}^T \tilde{c}_h^\theta(S_h,X_h,A_h)
        \mid S_t=s, X_t=x
        }.
    \end{align*}
    Then we have
    $\tilde{V}^{\pi,\theta}_T(s,x)
    =
    \EE_{a \sim \pi_T(\cdot|s,x)}
    \sbr{\tilde{c}_T^\theta(s,x,a)}$
    for any $(s,x)\in \cS_T \times \cX_T$, and
    \begin{align}\label{dp_finite_soc_fix_policy}
        \tilde{V}^{\pi,\theta}_t(s,x)
        =
        \EE_{a \sim \pi_t(\cdot|s,x),\, \xi \sim P_t}
        \sbr{
        \tilde{c}_t^\theta(s,x,a)
        +
        \tilde{V}_{t+1}^{\pi,\theta}\bigl(\tilde{F}_t(s,x,a,\xi)\bigr)
        },
        \quad \forall (s,x)\in \cS_t \times \cX_t,
    \end{align}
    for any $0 \leq t \leq T-1$.
    In addition, denote
    \begin{align*}
        \theta_\pi
        \in
        \Argmin_{\theta \in \Theta}
        \cbr{
        \tilde{V}^{\pi,\theta}_0(s,0) + f_\theta(0)
        }.
    \end{align*}
    We have for any $\pi \in \Pi_{\mathrm{MR}}(\tilde{\cM}_\theta)$,
    \begin{align}
        \cR^\pi(s)
        =
        \tilde{V}^{\pi,\theta_\pi}_0(s,0) + f_{\theta_\pi}(0). \label{fin_soc_pol_eval_val_to_risk}
    \end{align}
\end{theorem}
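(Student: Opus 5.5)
The plan mirrors the proof of Theorem \ref{theorem_risk_dp_fix_policy}, replacing the finite transition kernel with the transition function $\tilde{F}_t$ driven by the independent noise $\xi_t \sim P_t$.

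\emph{Terminal stage.} At $t = T$ there is no further transition, so the cost-to-go is just the expected terminal cost under $\pi_T(\cdot|s,x)$. This gives $\tilde{V}^{\pi,\theta}_T(s,x)=\EE_{a\sim\pi_T(\cdot|s,x)}[\tilde{c}^\theta_T(s,x,a)]$.

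\emph{Recursion \eqref{dp_finite_soc_fix_policy}.} For $t \le T-1$ I will use the tower property. Fix $(S_t,X_t)=(s,x)$ and split the sum as $\tilde{c}^\theta_t(s,x,A_t)+\sum_{h=t+1}^T \tilde{c}^\theta_h(S_h,X_h,A_h)$, then condition on $(A_t,\xi_t)$.

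Two facts make this work:
\begin{itemize}
\item $\pi$ is Markovian in the augmented state, so $A_t$ depends only on $(s,x)$ and on independent randomization.
\item $(S_{t+1},X_{t+1})=\tilde{F}_t(s,x,A_t,\xi_t)$, and $\xi_{t+1},\dots,\xi_{T-1}$ are independent of $\xi_t$ and do not depend on state or action.
\end{itemize}
Hence the conditional law of the future trajectory, given $(A_t,\xi_t)$, is the law of the augmented process started at $\tilde{F}_t(s,x,A_t,\xi_t)$ at stage $t+1$. The conditional expectation of the tail sum is therefore $\tilde{V}^{\pi,\theta}_{t+1}(\tilde{F}_t(s,x,A_t,\xi_t))$.

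Integrating over $a\sim\pi_t(\cdot|s,x)$ and $\xi\sim P_t$ gives \eqref{dp_finite_soc_fix_policy}. The only technical point is measurability of $\tilde{V}^{\pi,\theta}_{t+1}$, needed for the expectation to make sense. I will get it by backward induction from:
\begin{itemize}
\item continuity of $c_t$ and $F_t$, which makes $\tilde{F}_t$ continuous;
\item continuity of $f_\theta$ (Assumption \ref{assump_basic});
\item measurability of the Markov kernel $\pi_t$.
\end{itemize}
This is the step I expect to need the most care, and I will handle it by this standard measurable-selection and kernel-composition argument rather than in full detail.

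\emph{Risk identity \eqref{fin_soc_pol_eval_val_to_risk}.} By Proposition \ref{prop_reform_finite_soc}, $V^{\pi,\theta}_0(s)=\tilde{V}^{\pi,\theta}_0(s,0)+f_\theta(0)$ for every $\theta$. Substituting into \eqref{value_given_theta_finite_horizon_soc} gives
\[
\cR^\pi(s)=\min_{\theta\in\Theta}\bigl\{\tilde{V}^{\pi,\theta}_0(s,0)+f_\theta(0)\bigr\}.
\]
Evaluating at the minimizer $\theta_\pi$ yields \eqref{fin_soc_pol_eval_val_to_risk}. Existence of $\theta_\pi$ is implicit in the statement; it also follows if $\Theta$ is compact and the objective is continuous in $\theta$, which Assumption \ref{assump_basic} gives via $L_\Theta$-Lipschitzness.
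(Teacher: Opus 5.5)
Your proposal is correct and follows the same route as the paper. The paper's proof simply invokes standard dynamic programming for a fixed Markov policy in the augmented SOC, then combines Proposition \ref{prop_reform_finite_soc} with the definition of $\theta_\pi$. Your tower-property argument and measurability remarks fill in details the paper leaves implicit; note that for policy evaluation only Fubini and kernel composition are needed, not measurable selection.
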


\begin{proof}
    The first part of the claim \eqref{dp_finite_soc_fix_policy} follows from standard dynamic programming principles applied to the augmented SOC. 
    The rest of the claim follows from the definition of $\cR^\pi$, $\theta_\pi$, and Proposition \ref{prop_reform_finite_soc}.
\end{proof}

\subsection{Sample Complexity of Risk-averse Policy Evaluation and Optimization}
We will now turn our attention to \eqref{eq_risk_finite_horizon_soc} when the underlying distributions $\cbr{P_t}$ of transition noises $\cbr{\xi_t}$ are unknown. 
Instead, we assume that one can generate $n$ i.i.d. samples from $P_t$.
Let $\hat{P}_{t,n}$ be the empirical distribution constructed from such samples, 
and write $\hat{P}_n$ as  shorthand for $\{\hat{P}_{t,n}\}_{t=0}^{T-1}$.

\subsubsection{Risk-averse Policy Evaluation}

Denote  
$\hat{\cR}^\pi_n$ as the risk \eqref{eq_risk_finite_horizon_soc} defined by $\hat{P}_n$,
and $\hat{V}^{\pi, \theta}_{t,n}$ the corresponding value function in the augmented SOC $\tilde{\cM}_\theta$.
We are first interested in the number of samples needed for $\hat{\cR}^\pi_n$ to accurately estimate $\cR^\pi$.
To develop the sample complexity result, we make the following assumption on the underlying SOC problem. 
The same assumption has also been made in \cite{shapiro2025risk} for nested risk-averse SOC problems.

\begin{assumption}
\label{ass-lip_soc-1}
(i) For each $0 \leq t \leq T$, the state space $\cS_t$ (resp. action space $\cA_t$) is compact with radius $R$.  (ii)
There is  a positive constant $L$  such that
\begin{eqnarray*}
&&\abs{ c_t(s, a ) - c_t(s', a') }    \leq L \| (s, a) - (s', a') \|,\\
&&  \left\| F_t(s, a,\xi) - F_t(s', a',\xi) \right\|    \leq L \| (s, a) - (s', a') \|,
\end{eqnarray*}
for all $s, s' \in \cS_t$, $a, a' \in    \cA_t$,  $\xi \in   \Xi_t$ and $0 \leq t \leq T$.
\end{assumption}

As in the MDP setting, we will focus on $\pi \in \Pi_{\mathrm{MR}}(\tilde{\cM}_\theta)$ that is $L_\pi$-Lipschitz with respect to the original and augmented state variables, i.e., 
\begin{align}\label{eq_finite_soc_policy_lip}
    \|\pi_t(\cdot|s,x) - \pi_t(\cdot|s',x')\|_1 \leq L_\pi \norm{(s,x) - (s', x')}, \forall s, s' \in \cS_t, x,x' \in \cX_t.
\end{align}

We first establish the Lipschitz continuity of value functions with respect to both the original and the augmented state variables.

\begin{lemma}\label{lemma_fin_soc_pol_eval_val_lip_X}
    For any $0 \leq t \leq T$ and $s \in \cS_t$, the value functions $\tilde V_t^{\pi, \theta}(s, \cdot)$ and $\hat V_{t,n}^{\pi, \theta}(s, \cdot)$ are Lipschitz continuous with modulus
    \begin{align}\label{soc_finite_lipschitz_value_pi_x}
    L^{\pi, \cX}_{t} = (T-t+1)L_C\left(2 + \tfrac{T-t+2}{2}L_\pi\right). 
    \end{align}
    In addition, for any $x \in \cX_t$,  $\tilde V_t^{\pi, \theta}(\cdot, x)$ and $\hat V_{t,n}^{\pi, \theta}(\cdot, x)$ are Lipschitz continuous with modulus
    \begin{align}
        L^{\pi, \cS}_{T} & = L_C L_\pi + L_C L,  \nonumber \\
        L^{\pi, \cS}_{t} & = (T-t+1) L_C L_\pi + (L_C + L^{\pi, \cX}_{t+1} + L^{\pi, \cS}_{t+1})L . \label{soc_finite_lipschitz_value_pi_s}
    \end{align}
\end{lemma}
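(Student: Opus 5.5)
The proof uses backward induction on $t$, as in Lemma \ref{lemma_fin_mdp_pol_eval_val_lip}, and treats the two variables separately. Throughout I use the uniform bounds $\norm{\tilde c^\theta_t}_\infty \le L_C$ and $\norm{\tilde V^{\pi,\theta}_t}_\infty \le (T-t+1)L_C$. Both follow from Assumption \ref{assump_basic}, since $c_t\in[0,1]$ and $x+c_t\in[0,T+1]$. The same bounds hold for $\hat V^{\pi,\theta}_{t,n}$, because the empirical recursion has the same structure with $P_t$ replaced by $\hat P_{t,n}$. Every step below uses only that $\xi$ has some distribution independent of $(s,x,a)$, so the argument is identical for the true and empirical value functions.

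\emph{Augmented variable $x$.} Fix $s$. The key observation is that $\tilde F_t$ acts on $x$ by translation, $x' = x + c_t(s,a)$, while $s' = F_t(s,a,\xi)$ does not depend on $x$. I split the difference $\tilde V^{\pi,\theta}_t(s,x)-\tilde V^{\pi,\theta}_t(s,y)$ exactly as in the proof of Lemma \ref{lemma_fin_mdp_pol_eval_val_lip}.
\begin{itemize}
\item The first term comes from changing the action distribution $\pi_t(\cdot|s,x)\to\pi_t(\cdot|s,y)$ with the integrand held fixed at $y$. By Hölder's inequality it is at most $(T-t+1)L_C L_\pi|x-y|$, using \eqref{eq_finite_soc_policy_lip} with $s'=s$.
\item The second term comes from changing $x\to y$ inside the integrand. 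The cost part contributes $2L_C|x-y|$, since $\tilde c^\theta_t$ is a difference of two $L_C$-Lipschitz functions. The continuation part contributes $L^{\pi,\cX}_{t+1}|x-y|$ by induction, because the next states share the same $s'$ and have augmented components differing by exactly $x-y$.
\end{itemize}
This gives $L^{\pi,\cX}_t = L^{\pi,\cX}_{t+1} + 2L_C + (T-t+1)L_CL_\pi$ with base case $L^{\pi,\cX}_T = L_C(2+L_\pi)$. Summing the arithmetic progression yields \eqref{soc_finite_lipschitz_value_pi_x}.

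\emph{Original state variable $s$.} Fix $x$ and compare $s$ with $s'$ using the same two-term split.
\begin{itemize}
\item The policy-change term is at most $(T-t+1)L_CL_\pi\norm{s-s'}$.
\item In the integrand term, $\tilde c^\theta_t(s,x,a)-\tilde c^\theta_t(s',x,a) = f_\theta(x+c_t(s,a))-f_\theta(x+c_t(s',a))$, which is bounded by $L_C L\norm{s-s'}$ via Assumption \ref{ass-lip_soc-1}.
\item For the continuation, the next states are $(F_t(s,a,\xi), x+c_t(s,a))$ and $(F_t(s',a,\xi), x+c_t(s',a))$. A triangle inequality through the intermediate point $(F_t(s',a,\xi), x+c_t(s,a))$ bounds the difference by $L^{\pi,\cS}_{t+1}L\norm{s-s'} + L^{\pi,\cX}_{t+1}L\norm{s-s'}$, using the inductive hypothesis together with the $L$-Lipschitz property of $F_t$ and $c_t$.
\end{itemize}
Collecting terms gives \eqref{soc_finite_lipschitz_value_pi_s}. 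The base case $t=T$ has no continuation term and gives $L_CL_\pi + L_CL$.

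\emph{Main obstacle.} The only delicate point is the joint dependence of the next state on $s$ through both components. This is handled by the intermediate-point triangle inequality, which applies the two partial Lipschitz bounds one at a time rather than needing a joint modulus. A secondary check is that the $x$-bound does not need the $s$-bound, because the $s$-component of the next state does not move with $x$. This is why \eqref{soc_finite_lipschitz_value_pi_x} coincides with the MDP constant.
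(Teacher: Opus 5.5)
Your proposal is correct and follows essentially the same route as the paper: backward induction with the split into a policy-change term (bounded via Hölder and \eqref{eq_finite_soc_policy_lip}) and an integrand-change term, with the $x$-part reducing to the MDP argument because $\tilde F_t$ shifts $x$ by a translation. Your intermediate-point triangle inequality for the continuation term in the $s$-part makes explicit the step the paper attributes to ``the inductive hypothesis,'' and it yields exactly the recursion \eqref{soc_finite_lipschitz_value_pi_s}.
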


\begin{proof}
    The proof of \eqref{soc_finite_lipschitz_value_pi_x} follows from similar lines as in Lemma \ref{lemma_fin_mdp_pol_eval_val_lip}.
    We proceed to establish \eqref{soc_finite_lipschitz_value_pi_s} by backward induction.
For $t = T$ and any $x \in \cX_T$, we have
\begin{align*}
    & |\tilde V_T^{\pi, \theta}(s,x) - \tilde V_T^{\pi, \theta}(s',x)| \\
    \leq &|\EE_{a \sim \pi_T(\cdot|s,x)}[\tilde{c}_T^\theta(s',x,a)] - \EE_{a \sim \pi_T(\cdot|s',x)}[\tilde{c}_T^\theta(s',x,a)]| + {|\EE_{a \sim \pi_T(\cdot|s,x)}[\tilde{c}_T^\theta(s,x,a) - \tilde{c}_T^\theta(s',x,a)]}| \\
    \overset{(a)}{\leq} & \sup_{a \in \cA_T}|\tilde{c}_T^\theta(s',x,a)| \cdot \|\pi_T(\cdot|s,x) - \pi_T(\cdot|s',x)\|_1 + L_C L \|s - s'\| \\
    \leq & L_C L_\pi \|s-s'\| + L_C L\|s-s'\|,
\end{align*}
where $(a)$ follows from Hölder’s inequality, Assumption  \ref{assump_basic} and \eqref{eq_finite_soc_policy_lip}.  Suppose the claim holds for $t+1$, then we obtain
\begin{align}
    & |\tilde V_t^{\pi, \theta}(s,x) - \tilde V_t^{\pi, \theta}(s',x)| \nonumber \\
    \leq & |\EE_{a \sim \pi_t(\cdot|s,x), \xi \sim P_t}[\tilde{c}_T^\theta (s',x,a) + \tilde V_{t+1}^{\pi, \theta}( \tilde F_t(s',x,a,\xi))] - \EE_{a \sim \pi_t(\cdot|s',x), \xi \sim P_t}[\tilde{c}_T^\theta (s',x,a) + \tilde V_{t+1}^{\pi, \theta}( \tilde F_t(s',x,a,\xi))]| \nonumber \\
    & + | \EE_{a \sim \pi_t(\cdot|s,x), \xi \sim P_t}[\tilde{c}_T^\theta(s,x,a) - \tilde{c}_T^\theta(s',x,a) + \tilde V_{t+1}^{\pi, \theta}\left(\tilde F_t(s,x,a,\xi)\right) - \tilde V_{t+1}^{\pi, \theta}(\tilde F_t(s',x,a,\xi))]|. \nonumber \\
    \leq & (T-t+1)L_C L_\pi \norm{s-s'} + L_CL \norm{s-s'} + (L_{t+1}^{\pi, \cX} + L_{t+1}^{\pi, \cS})L \norm{s-s'}, \label{fin_soc_pol_eval_big_phi_lip}
\end{align}
where the last inequality follows from Hölder’s inequality, Assumptions \ref{assump_basic} and \ref{ass-lip_soc-1}, and the inductive hypothesis.
\end{proof}
We now turn our attention back to the estimation error $|\hat{\cR}_n^\pi(s)-\cR^\pi(s)|$. In view of \eqref{fin_soc_pol_eval_val_to_risk}, it suffices to control $(\hat{V}_{0,n}^{\pi,\theta}-\tilde{V}_0^{\pi,\theta})(s,0)$ uniformly over $\theta \in \Theta$. To this end, we derive an error decomposition for $(\hat{V}_{t,n}^{\pi,\theta}-\tilde{V}_t^{\pi,\theta})(s,x)$ into martingale difference terms, to which we apply Freedman's inequality.

\begin{lemma} \label{lemma_fin_soc_pol_eval_berns_sum}
    Fix $0 \leq t \leq T-1$, $s \in \cS_t$, $x \in \cX_t$. Then for any $\delta \in (0, 1)$, with probability at least $1-\delta$, we have
    \begin{align}
        \abs{ (\hat{V}_{t,n}^{\pi, \theta} - \tilde{V}_t^{\pi, \theta})(s,x)} \leq \sqrt{\tfrac{2}{n} \hat{W}_{t,n}^{\pi, \theta} (s,x) \log(\tfrac{2}{\delta})} + \tfrac{2 T L_C}{3n}\log(\tfrac{2}{\delta}), \nonumber
    \end{align}
    where
    \begin{align}
        \hat{W}_{t,n}^{\pi, \theta} (s,x) \coloneqq \tsum_{h=t}^{T-1} \EE_{P}^\pi \left[ \left. \Var_{\xi_h \sim P_h} \left(\hat{V}_{h+1,n}^{\pi, \theta}(\tilde{F}_h(S_h, X_h, A_h, \xi_h))\right) \right| (S_t, X_t) = (s,x) \right]. \nonumber
    \end{align}
\end{lemma}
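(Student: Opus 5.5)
The proof follows the MDP argument of Lemma \ref{lemma_err_bernstein_eval} almost verbatim, with the empirical kernel $\hat{\PP}_{h,n}$ replaced by the empirical noise distribution $\hat{P}_{h,n}$. The Bellman equation \eqref{dp_finite_soc_fix_policy}, written for both $P$ and $\hat{P}_n$, gives the one-step recursion
\begin{align*}
(\hat{V}_{t,n}^{\pi,\theta}-\tilde{V}_t^{\pi,\theta})(s,x)
= \EE_{a\sim\pi_t(\cdot|s,x)}\bigl[(\EE_{\xi\sim\hat{P}_{t,n}}-\EE_{\xi\sim P_t})\,\hat{V}_{t+1,n}^{\pi,\theta}(\tilde{F}_t(s,x,a,\xi))\bigr]
+ \EE_{a\sim\pi_t,\xi\sim P_t}\bigl[(\hat{V}_{t+1,n}^{\pi,\theta}-\tilde{V}_{t+1}^{\pi,\theta})(\tilde{F}_t(s,x,a,\xi))\bigr].
\end{align*}
Unrolling it to $h=T-1$ under the true law, with occupancy measure $\mu_{t,h}^\pi(\cdot|s,x)$ on $\cS_h\times\cX_h$, writes the error as
\[
\tsum_{h=t}^{T-1}\EE_{(s',x')\sim\mu_{t,h}^\pi,\,a'\sim\pi_h}\bigl[(\EE_{\hat{P}_{h,n}}-\EE_{P_h})\,\hat{V}_{h+1,n}^{\pi,\theta}(\tilde{F}_h(s',x',a',\cdot))\bigr].
\]
The next step is to expand $\hat{P}_{h,n}=\frac1n\sum_i\delta_{\xi_h^i}$. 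This gives $\Delta_{i,h}=\frac1n\EE_{\mu_{t,h}^\pi,\pi_h}[\delta_{i,h}(s',x',a')]$, where
\[
\delta_{i,h}(s',x',a') = \hat{V}_{h+1,n}^{\pi,\theta}(\tilde{F}_h(s',x',a',\xi_h^i)) - \EE_{\xi\sim P_h}\hat{V}_{h+1,n}^{\pi,\theta}(\tilde{F}_h(s',x',a',\xi)).
\]
The error then equals $\sum_{h}\sum_i\Delta_{i,h}$. The structure of the SOC model helps here: at stage $h$ the noise samples $\xi_h^i$ are shared across all state–action pairs, but $\hat{V}_{h+1,n}^{\pi,\theta}$ depends only on samples from stages $h+1,\dots,T-1$. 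The outer expectation over $\mu_{t,h}^\pi$ is taken under the true $P$, so it is deterministic. The filtration $\cG_{i,h}$ is built as in the MDP proof, from all samples at stages $\ge h+1$ together with $\xi_h^j$ for $j\ge i$. Ordering the terms backward in $h$ and within each $h$, independence of $\xi_h^i$ from $\cG_{i+1,h}$ gives $\EE[\Delta_{i,h}|\cG_{i+1,h}]=0$.

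For the conditional variance, apply Jensen's inequality to pull the square inside $\EE_{\mu,\pi}$. This gives
\[
\Var(\Delta_{i,h}|\cG_{i+1,h})\le \frac1{n^2}\EE_{(s',x')\sim\mu^\pi_{t,h},a'\sim\pi_h}\bigl[\Var_{\xi_h\sim P_h}(\hat{V}_{h+1,n}^{\pi,\theta}(\tilde{F}_h(s',x',a',\xi_h)))\bigr],
\]
and summing over $i,h$ yields at most $\frac1n\hat{W}_{t,n}^{\pi,\theta}(s,x)$ as defined in the statement. The almost-sure bound $|\Delta_{i,h}|\le 2TL_C/n$ follows from $\|\tilde c_h^\theta\|_\infty\le L_C$, so $|\hat{V}_{h+1,n}^{\pi,\theta}|\le TL_C$. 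Freedman's inequality then gives the claimed bound with probability $1-\delta$.

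The main obstacle is that Freedman's inequality is stated with a deterministic variance bound, whereas $\hat{W}_{t,n}^{\pi,\theta}$ is random because it depends on the later-stage samples. In the MDP proof this point was glossed over, and I would handle it the same way. The cleanest route is to condition stage by stage, or to observe that the predictable quadratic variation is bounded by $\hat{W}/n$ pathwise, and then use the Bernstein-type form of Freedman's inequality with an implicit (random) variance proxy. If a fully rigorous version is needed, I would peel over a geometric grid of the possible values of $\hat{W}\in[0,T^2L_C^2]$, which costs only a logarithmic factor. A secondary check is measurability of $\hat{V}_{h+1,n}^{\pi,\theta}\circ\tilde{F}_h$ in $\xi$, which follows from continuity of $F_h$ and the Lipschitz property in Lemma \ref{lemma_fin_soc_pol_eval_val_lip_X}.
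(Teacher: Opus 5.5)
Your proposal is essentially the paper's proof. The paper reruns the martingale-decomposition and Freedman argument of Lemma~\ref{lemma_fin_mdp_pol_eval_berns_sum}, with $\mu^\pi_{t,h}(\cdot|s,x)$ taken as the law of $(S_h,X_h)$ under $P$, and your adaptation (shared noise $\xi_h^i$, action-specific centering in $\delta_{i,h}$, Jensen for the conditional variance) is the right one. The random-variance issue you raise is genuine and is glossed over in the paper too: of your remedies only peeling works (Bernstein applied stage by stage, conditionally on later samples, gives $\sum_h\sqrt{v_h}$ rather than $\sqrt{\sum_h v_h}$, with $v_h$ the stage-$h$ term of $\hat{W}_{t,n}^{\pi,\theta}/n$, and so loses $\sqrt{T}$), and it yields the bound only up to constants with $\log(2/\delta)$ replaced by $\log(2K/\delta)$, $K=\cO(\log(nT))$, since a priori $\hat{W}_{t,n}^{\pi,\theta}=\cO(T^3L_C^2)$ rather than $T^2L_C^2$, which is harmless for Theorem~\ref{thrm_sample_eval_soc_finite}.
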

\begin{proof}
    The proof follows from the same argument as in Lemma \ref{lemma_fin_mdp_pol_eval_berns_sum}. 
    The only difference is that we use $\mu_{t,h}^{\pi}(\cdot|s,x)$ to denote the distribution of $(S_h, X_h)$ under policy $\pi$ starting from $(s,x) \in \cS_t \times \cX_t$. With this notation in place, the same martingale decomposition yields the desired claim.
\end{proof}

Lemma \ref{lemma_fin_soc_pol_eval_berns_sum} suggests that controlling $(\hat{V}_{t,n}^{\pi,\theta}-\tilde{V}_t^{\pi,\theta})(s,x)$ reduces to controlling the variance term $\widehat{W}_{t,n}^{\pi,\theta}$. The difficulty is that the conditional variance therein depends on the empirical value function $\hat{V}_{h+1,n}^{\pi,\theta}$. We therefore establish below that its counterpart defined with the true value function $\tilde{V}_{h+1}^{\pi,\theta}$ scales at the order of $O(T^2)$.
\begin{lemma} \label{lemma_fin_soc_pol_eval_bell_tot_var}
    For all $0 \leq t \leq T-1$, $s \in \cS_t$ and $x \in \cX_t$, we have
    \begin{align}
        W_t^{\pi, \theta}(s,x) \coloneqq \tsum_{h=t}^{T-1} \EE_{P}^\pi \left[ \left. \Var_{\xi_h \sim P_h} \left(\tilde{V}_{h+1}^{\pi, \theta}(\tilde{F}_h(S_h, X_h, A_h, \xi_h))\right) \right| (S_t, X_t) = (s,x) \right] \leq (T-t)^2 L_C^2. \nonumber
    \end{align}
\end{lemma}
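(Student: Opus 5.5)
We follow the argument of Lemma \ref{lemma_fin_mdp_pol_eval_bell_tot_var}, with the finite kernel $\tilde{\PP}^\pi_h$ replaced by the randomness of $(A_h,\xi_h)$ in the augmented SOC. The idea is to identify $W_t^{\pi,\theta}(s,x)$ with the total variance of the future augmented cost. We set $\tilde{c}_h^{\pi,\theta}(s,x) \coloneqq \EE_{a\sim\pi_h(\cdot|s,x)}[\tilde{c}_h^\theta(s,x,a)]$ and define the cost-to-go variance
\begin{align*}
\bar W_t^{\pi,\theta}(s,x) \coloneqq \Var^\pi_P\rbr{ \tsum_{h=t+1}^{T} \tilde{c}_h^{\pi,\theta}(S_h,X_h) \,\Big|\, (S_t,X_t)=(s,x)}.
\end{align*}
Since $\tilde{c}_h^{\pi,\theta}$ is already averaged over the action, the only randomness in this sum comes from the state trajectory.

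The first step is a one-step recursion via the law of total variance, conditioning on $(S_{t+1},X_{t+1}) = \tilde F_t(s,x,A_t,\xi_t)$. By the Markov property of the augmented SOC under $\pi \in \Pi_{\mathrm{MR}}$ and \eqref{dp_finite_soc_fix_policy},
\begin{align*}
\EE\sbr{\tsum_{h=t+1}^T \tilde{c}_h^{\pi,\theta}(S_h,X_h) \,\Big|\, S_{t+1},X_{t+1}} = \tilde V_{t+1}^{\pi,\theta}(S_{t+1},X_{t+1}).
\end{align*}
The law of total variance then gives
\begin{align*}
\bar W_t^{\pi,\theta}(s,x) = \Var_{a\sim\pi_t(\cdot|s,x),\,\xi\sim P_t}\rbr{\tilde V^{\pi,\theta}_{t+1}(\tilde F_t(s,x,a,\xi))} + \EE\sbr{\bar W_{t+1}^{\pi,\theta}(S_{t+1},X_{t+1})}.
\end{align*}
Unrolling this recursion backward, with $\bar W_T^{\pi,\theta}=0$, expresses $\bar W_t^{\pi,\theta}(s,x)$ as the sum over $h=t,\dots,T-1$ of the conditional expectation of the joint $(A_h,\xi_h)$-variance at $(S_h,X_h)$.

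The main subtlety is that the statement uses the variance over $\xi_h$ alone, with $A_h$ fixed inside the outer expectation $\EE_P^\pi$. We handle this by applying the law of total variance once more, now conditioning on $A_h$:
\begin{align*}
\Var_{a,\xi}\rbr{\tilde V_{h+1}^{\pi,\theta}} = \EE_{a}\sbr{\Var_{\xi}\rbr{\tilde V_{h+1}^{\pi,\theta}(\tilde F_h(s,x,a,\xi))}} + \Var_a\rbr{\EE_\xi\sbr{\tilde V_{h+1}^{\pi,\theta}}}.
\end{align*}
The first term on the right is at most the left-hand side. Summing over $h$ and taking $\EE_P^\pi$ yields $W_t^{\pi,\theta}(s,x)\le \bar W_t^{\pi,\theta}(s,x)$.

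It remains to bound the total variance. Assumption \ref{assump_basic} gives $\abs{\tilde{c}_h^\theta}\le L_C$, since $x$ and $x+c_h$ both lie in $[0,T+1]$. Hence $\abs{\sum_{h=t+1}^T \tilde{c}_h^{\pi,\theta}}\le (T-t)L_C$, and the variance of this bounded variable is at most its second moment, which is $(T-t)^2L_C^2$. Measurability and integrability of $\tilde V^{\pi,\theta}_{h+1}\circ\tilde F_h$ follow from continuity of $F_h$, $c_h$ and $f_\theta$ together with compactness, so the conditional-variance manipulations are legitimate.
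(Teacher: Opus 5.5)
Your proposal is correct and follows the paper's route: the paper proves this lemma by invoking the same total-variance recursion as Lemma \ref{lemma_fin_mdp_pol_eval_bell_tot_var}, then bounding via $|\sum_{h=t+1}^T \tilde{c}_h^{\pi,\theta}(S_h,X_h)| \le (T-t)L_C$. Your extra law-of-total-variance step, conditioning on $A_h$, is correct. It shows that the $\xi_h$-only variance in the statement is dominated by the joint $(A_h,\xi_h)$-variance that the recursion actually produces, a detail the paper's one-line ``same argument'' proof leaves implicit.
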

\begin{proof}
    The claim follows from the same argument as in Lemma \ref{lemma_fin_mdp_pol_eval_bell_tot_var}.
\end{proof}

Combining Lemma \ref{lemma_fin_soc_pol_eval_val_lip_X}, \ref{lemma_fin_soc_pol_eval_berns_sum} and \ref{lemma_fin_soc_pol_eval_bell_tot_var}, we now proceed to establish the sample complexity of the estimator $\hat{\cR}_n^\pi$ for $\cR^\pi$.
\begin{theorem}\label{thrm_sample_eval_soc_finite}
    Suppose $\pi$ satisfies \eqref{eq_finite_soc_policy_lip}. For any $\epsilon > 0$ and $\delta \in (0,1)$, take
    \begin{align}
         n = \cO\left( \tfrac{T^2 L_C^2}{\epsilon^2} \log \left(\tfrac{T^2\left[T^3 L_C L_\pi L^{T+1}/\epsilon\right]^{d_\cS + 1} R^{d_\cS} (L_\Theta R_\Theta/\epsilon)^{d_\Theta}}{ \delta}\right)\right). \nonumber
    \end{align}
    Then with probability at least $1- \delta$, we have
    \begin{align}
        \abs{\hat \cR_n^\pi(s) - \cR^\pi(s) } \leq \epsilon, ~ \forall s \in \cS_0. \nonumber
    \end{align}
\end{theorem}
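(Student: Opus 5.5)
We follow the proof of Theorem \ref{thrm_sample_fin_mdpite_eval}. The only genuine change is that the union bound must now cover a net of the joint continuous space $\cS_t \times \cX_t$ instead of $\cS_t$ (finite) times a net of $\cX_t$.

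First fix $\theta \in \Theta$. For each $0 \le t \le T$ let $\cN_t$ be an $\eta$-net of $\cS_t \times \cX_t$, with $\eta>0$ chosen later. Since $\cS_t$ has radius $R$ and $\cX_t=[0,t]$, we can take $|\cN_t| \le (cR/\eta)^{d_\cS}(T/\eta)$. Apply Lemma \ref{lemma_fin_soc_pol_eval_berns_sum} at every net point and take a union bound with $\delta' = \delta/(T\max_t|\cN_t|)$. This gives, simultaneously over all net points, the Freedman bound in terms of $\hat W^{\pi,\theta}_{t,n}$.

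Define $\cB^{\pi,\theta}_n$ and $\cB^{\pi,\theta}_n(\eta)$ as in \eqref{fin_mdp_pol_eval_sup_def}–\eqref{fin_mdp_pol_eval_sup_disc_def}, with suprema now taken over $(s,x)\in\cS_t\times\cX_t$ and over $\cN_t$ respectively. Lemma \ref{lemma_fin_soc_pol_eval_val_lip_X} shows that $\tilde V^{\pi,\theta}_t$ and $\hat V^{\pi,\theta}_{t,n}$ are Lipschitz in $(s,x)$ with modulus at most $L^{\pi,\cS}_t + L^{\pi,\cX}_t$. Hence
\[
\cB^{\pi,\theta}_n \le 2\max_t(L^{\pi,\cS}_t+L^{\pi,\cX}_t)\eta + \cB^{\pi,\theta}_n(\eta).
\]

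Next we need this Lipschitz modulus explicitly, and this is the step I expect to be the main obstacle, since it is where the $L^{T+1}$ factor enters. Unrolling the recursion \eqref{soc_finite_lipschitz_value_pi_s} with $L^{\pi,\cX}_{t+1} = \cO(T^2 L_C L_\pi)$ gives
\[
L^{\pi,\cS}_t \le \sum_{k=0}^{T-t} L^k \cdot \cO(T^2 L_C L_\pi),
\]
so $L^{\pi,\cS}_0 = \cO(T^3 L_C L_\pi \max(1,L)^{T+1})$. Because we need $\eta \asymp \epsilon/L^{\pi,\cS}_0$, the net cardinality is of order $(T^3L_CL_\pi L^{T+1}/\epsilon)^{d_\cS+1}R^{d_\cS}$. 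This is exactly the quantity that appears inside the logarithm of the stated $n$.

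The variance argument is unchanged from \eqref{elemental_variance_comparison}–\eqref{fin_mdp_pol_eval_var_gen_bound}. Using $\Var(Y+Z)\le 2\Var(Y)+2\EE Z^2$ and Lemma \ref{lemma_fin_soc_pol_eval_bell_tot_var},
\[
\hat W^{\pi,\theta}_{t,n} \le 2T^2L_C^2 + 2T(\cB^{\pi,\theta}_n)^2 .
\]
Substitute this into the Freedman bound and use the net comparison above. When $n \ge 16T\log(2/\delta')$, the $\cB^{\pi,\theta}_n$ term on the right can be absorbed into the left, which gives
\[
\cB^{\pi,\theta}_n \le 4TL_C\sqrt{\log(2/\delta')/n} + \cO(L\text{-modulus}\cdot\eta) + \tfrac{4TL_C}{3n}\log(2/\delta').
\]
Choosing $\eta = \epsilon/(8\max_t(L^{\pi,\cS}_t+L^{\pi,\cX}_t))$ and $n = \cO(T^2L_C^2\log(1/\delta')/\epsilon^2)$ then yields $\cB^{\pi,\theta}_n \le 3\epsilon/4$.

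Finally, to make the bound uniform in $\theta$, take an $\epsilon/(8L_\Theta)$-net of $\Theta$, of size at most $(1+16L_\Theta R_\Theta/\epsilon)^{d_\Theta}$. Union bound over this net, which contributes the factor $(L_\Theta R_\Theta/\epsilon)^{d_\Theta}$ inside the logarithm. Assumption \ref{assump_basic} then extends the bound to every $\theta\in\Theta$, since $\tilde V^{\pi,\theta}_0$ and $\hat V^{\pi,\theta}_{0,n}$ are both $\cO(TL_\Theta)$-Lipschitz in $\theta$; the extra $T$ factor is harmless inside the logarithm.

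The conclusion then follows from \eqref{fin_soc_pol_eval_val_to_risk}: we have $|\min_\theta a_\theta - \min_\theta b_\theta| \le \sup_\theta |a_\theta - b_\theta|$, applied with $a_\theta,b_\theta$ the empirical and true values of $\tilde V^{\pi,\theta}_0(s,0)+f_\theta(0)$. This gives the bound uniformly over $s\in\cS_0$, which the net over $\cS_0\times\{0\}$ already covers.
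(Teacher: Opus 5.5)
Your proposal is correct and follows the paper's proof essentially step for step. The shared steps are: a net over $\cS_t\times\cX_t$ combined with Lemma \ref{lemma_fin_soc_pol_eval_berns_sum} and a union bound; the Lipschitz comparison from Lemma \ref{lemma_fin_soc_pol_eval_val_lip_X}, where $L_0^{\pi,\cS}=\cO(T^3L_CL_\pi L^{T+1})$ produces the stated logarithmic term; the variance bound and absorption argument from Theorem \ref{thrm_sample_fin_mdpite_eval}; and a final covering of $\Theta$.

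One small tightening removes the extra factor $T$ you flagged in the $\Theta$-covering. Because the costs telescope, $\tilde V^{\pi,\theta}_t(s,x)=\EE\bigl[f_\theta\bigl(x+\sum_{h\ge t}c_h(S_h,A_h)\bigr)\bigr]-f_\theta(x)$ with argument in $[0,T+1]$. Hence both value functions, and their difference, are $2L_\Theta$-Lipschitz in $\theta$ rather than $\cO(TL_\Theta)$. With that constant, your $\epsilon/(8L_\Theta)$-net of $\Theta$ suffices exactly as you stated it, contributing at most $\epsilon/4$.
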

\begin{proof}
    For each $0 \leq t \leq T$, let $\cN_{\cX_t}$ (resp. $\cN_{\cS_t}$) denote an $\eta$-net for $[0,t]$ (resp. $\cS_t$) where $\eta > 0$ will be specified later. Similar to \eqref{fin_mdp_pol_eval_sup_def} and \eqref{fin_mdp_pol_eval_sup_disc_def}, we define
    \begin{align}
        \cB_n^{\pi, \theta} & \coloneqq \sup \left\{ \abs{(\hat{V}_{t,n}^{\pi, \theta} - \tilde{V}_t^{\pi, \theta})(s,x)} : 0 \leq t \leq T, s \in \cS_t, x \in [0,t] \right\}, \label{fin_soc_pol_eval_sup_def} \\
        \cB_n^{\pi, \theta}(\eta) & \coloneqq \sup \left\{ \abs{(\hat{V}_{t,n}^{\pi, \theta} - \tilde{V}_t^{\pi, \theta})(s,x)} : 0 \leq t \leq T, s \in \cN_{\cS_t}, x \in \cN_{\cX_t} \right\}. \nonumber
    \end{align}
    From Lemma \ref{lemma_fin_soc_pol_eval_val_lip_X}, we have
    \begin{align}
        \cB_n^{\pi, \theta} \leq 2(L_{0}^{\pi,\cX} + L_{0}^{\pi,\cS})\eta + B_n^{\pi, \theta}(\eta). \label{fin_soc_pol_eval_gen_bound}
    \end{align}
    Following an argument similar to that in Theorem \ref{thrm_sample_fin_mdpite_eval}, it suffices to choose $\eta = \tfrac{\epsilon}{8(L_0^{\pi, \cX} + L_0^{\pi, \cS})}$ and
    \begin{align}
        n = \cO\left( \tfrac{T^2 L_C^2 }{\epsilon^2} \log \left(\tfrac{T \abs{\cN_{\cX_T}} \max_{t}\abs{\cN_{\cS_t}}}{\delta}\right)\right) \overset{(a)}{=} \cO\left( \tfrac{T^2 L_C^2}{\epsilon^2} \log \left(\tfrac{T^2\left[L_C L_\pi T^3 L^{T+1}/\epsilon\right]^{d_\cS + 1} R^{d_\cS}}{\delta}\right)\right) \nonumber
    \end{align}
    to ensure that $\cB_n^{\pi, \theta} \leq \tfrac{3 \epsilon}{4}$ holds with probability at least $1-\delta$. Here $(a)$ follows from $\abs{\cN_{\cX_t}} \leq \tfrac{T}{\eta}$ (resp. $\abs{\cN_{\cS_t}} \leq (1 + 2R/\eta)^{d_\cS}$), the definition of $\eta$, and Lemma \ref{lemma_fin_soc_pol_eval_val_lip_X}, which implies  $L_0^{\pi, \cX} = \cO(T^2 L_C L_\pi)$ and $L_0^{\pi, \cS} = \cO(T^3 L_C L_\pi L^{T+1})$. The proof is concluded by applying the same covering argument as in Theorem \ref{thrm_sample_fin_mdpite_eval} to control $\sup_{\theta \in \Theta} \cB_n^{\pi, \theta}$.
\end{proof}

In view of Theorem \ref{thrm_sample_eval_soc_finite}, the sample complexity for risk-averse policy evaluation with a static risk measure \eqref{eq_risk_functional} satisfying Assumptions \ref{assump_basic} and \ref{ass-lip_soc-1} can be bounded by 
$\tilde{\cO}(\frac{T^4 L_C^2 (d_\cS + d_\Theta)}{\epsilon^2} \log(\tfrac{L_\pi L L_\Theta R_\Theta}{\delta \epsilon}))$.
It is interesting to note that the sample complexity for SOCs increases from that of MDPs (c.f., Theorem \ref{thrm_sample_fin_mdpite_eval}) by a factor of $\cO(T)$.
This is due to the Lipschitz constant of the value function scaling exponentially with respect to the number of stages (Lemma \ref{lemma_fin_soc_pol_eval_val_lip_X}).

\subsubsection{Risk-averse Policy Optimization}


We now proceed to establish the sample complexity for policy optimization \eqref{eq_risk_finite_horizon_soc}. Let us denote by $\hat{V}_{t,n}^{\theta}$ the optimal value function of the augmented SOC, defined in the same way as $\tilde{V}_{t}^{\theta}$ in \eqref{dp_val_give_theta_finite_soc}, with $P$ replaced by $\hat{P}_n$ therein. In addition, we use
\begin{align}
    \cR^*(s)=\min_{\pi\in\Pi(\cM)}\cR^\pi(s),
\qquad
\hat{\cR}_n^*(s)=\min_{\pi\in\Pi(\cM)}\hat{\cR}_n^\pi(s) \nonumber
\end{align}
to denote the optimal risk and its empirical estimate. For the rest of our discussion in this section, we are interested in estimating the optimal risk $\cR^*$ via the estimator $\hat{\cR}_n^*$. That is, for any target precision $\epsilon>0$, we seek to characterize a sufficient sample size $n$ to ensure $\sup_{s\in\cS_0}|\hat{\cR}_n^*(s)-\cR^*(s)|\leq \epsilon,$ and to guarantee that $\hat{\pi}_n^* \in \Argmin_{\pi} \hat{\cR}_n^\pi(s)$ is an $\epsilon$-optimal policy for \eqref{eq_risk_finite_horizon_soc}, with high probability.

Before we proceed, it could be worth discussing the basic argument of this section. As in the MDP setting, we have
\begin{align}\label{ineq_policy_opt_soc_basic}
    \tilde{V}_0^{\pi^\theta, \theta}(s,0) - \hat{V}_{0,n}^{\pi^\theta, \theta}(s,0) \overset{(a)}{\leq} \tilde{V}_0^{\theta}(s,0) - \hat{V}_{0,n}^{\theta}(s,0) \overset{(b)}{\leq} \tilde{V}_0^{\hat{\pi}^\theta, \theta}(s,0) - \hat{V}_{0,n}^{\hat{\pi}^\theta, \theta}(s,0) 
\end{align}  
where $\pi^\theta \in \Argmin_{\pi} \tilde{V}_{0}^{\pi,\theta}(s,0)$ and $\hat{\pi}_n^\theta \in \Argmin_{\pi} \hat{V}_{0,n}^{\pi,\theta}(s,0)$. The main difficulty is that neither $\pi^\theta$ nor $\hat{\pi}_n^\theta$ is randomized, and consequently they do not satisfy \eqref{eq_finite_soc_policy_lip}. To address this, we proceed to approximate the optimal policy by a Lipschitz continuous randomized policy. To this end, we first establish the Lipschitz continuity of the optimal value function $\tilde{V}_{t}^{\theta}$ with respect to both the original state variable and the augmented state variable.

\begin{lemma} \label{lemma_fin_soc_pol_opt_lip_X}
    For any $0 \leq t \leq T$ and $s \in \cS_t$, the value functions $\tilde V_t^\theta(s, \cdot)$ and $\hat V_{t,n}^\theta(s, \cdot)$ are Lipschitz continuous with modulus
    \begin{align}\label{soc_finite_lipsthiz_x_opt}
    L^\cX_{t} = 2(T-t+1)L_C.
    \end{align}
    In addition, 
     for any $0 \leq t \leq T$ and $x \in [0,t]$, $\tilde V_t^\theta(\cdot, x)$ and $\hat V_{t,n}^{\theta}(\cdot, x)$ are Lipschitz continuous with modulus
    \begin{align}
    L^\cS_T  = L_C L , ~~~
        L^\cS_{t}  = (L_C + L^\cX_{t+1} + L^\cS_{t+1})L . \label{soc_finite_lipsthiz_s_opt}
    \end{align}

\end{lemma}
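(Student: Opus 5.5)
The plan is backward induction on $t$, treating the two moduli separately. The argument is identical for $\tilde V^\theta_t$ and $\hat V^\theta_{t,n}$: the only property of the noise distribution used is that $\tilde V$ is an expectation over $\xi\sim P_t$, which holds equally for $\hat P_{t,n}$. The basic tool is that a pointwise minimum over $a\in\cA_t$ of functions that are Lipschitz uniformly in $a$ is Lipschitz with the same modulus, since
\begin{align*}
\abs{\min_a g(a)-\min_a h(a)}\le \sup_a\abs{g(a)-h(a)}.
\end{align*}
Compactness of $\cA_t$ and continuity of $c_t$, $F_t$ give attainment of the minimum, but we only need the inequality above.

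For the modulus in $x$, \eqref{soc_finite_lipsthiz_x_opt}, we copy Lemma \ref{lemma_fin_mdp_pol_opt_val_lip}.

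\emph{Base case $t=T$.} By Assumption \ref{assump_basic},
\begin{align*}
\abs{\tilde c_T^\theta(s,x,a)-\tilde c_T^\theta(s,y,a)}\le 2L_C\abs{x-y},
\end{align*}
because both $f_\theta(x+c)$ and $f_\theta(x)$ move by at most $L_C\abs{x-y}$. This gives $L^\cX_T=2L_C$.

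\emph{Inductive step.} For $t<T$, the transition satisfies $\tilde F_t(s,x,a,\xi)=(F_t(s,a,\xi),x+c_t(s,a))$ and $\tilde F_t(s,y,a,\xi)=(F_t(s,a,\xi),y+c_t(s,a))$. These share the same $s'$, and their $x'$-components differ by exactly $x-y$. The expectation term is therefore bounded by $L^\cX_{t+1}\abs{x-y}$. Adding the cost term gives
\begin{align*}
L^\cX_t=2L_C+2(T-t)L_C=2(T-t+1)L_C.
\end{align*}

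For the modulus in $s$, \eqref{soc_finite_lipsthiz_s_opt}, we fix $x$ and compare $s$ with $s'$ at a common action $a$.

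\emph{Cost term.} Using Assumption \ref{ass-lip_soc-1}(ii) with the same action,
\begin{align*}
\abs{\tilde c_t^\theta(s,x,a)-\tilde c_t^\theta(s',x,a)}=\abs{f_\theta(x+c_t(s,a))-f_\theta(x+c_t(s',a))}\le L_C L\norm{s-s'}.
\end{align*}
At $t=T$ this alone gives $L^\cS_T=L_CL$. Here $f_\theta(x)$ cancels, so we get $L_C$ rather than $2L_C$.

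\emph{Continuation term.} For $t<T$, the two next augmented states are $(F_t(s,a,\xi),x+c_t(s,a))$ and $(F_t(s',a,\xi),x+c_t(s',a))$. We split the difference through the intermediate point $(F_t(s',a,\xi),x+c_t(s,a))$ with the triangle inequality:
\begin{itemize}
\item The first piece changes only the original state, by at most $\norm{F_t(s,a,\xi)-F_t(s',a,\xi)}\le L\norm{s-s'}$. It contributes $L^\cS_{t+1}L\norm{s-s'}$.
\item The second piece changes only the augmented coordinate, by at most $\abs{c_t(s,a)-c_t(s',a)}\le L\norm{s-s'}$. It contributes $L^\cX_{t+1}L\norm{s-s'}$.
\end{itemize}
Taking the expectation over $\xi$ preserves these bounds. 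Summing with the cost term and then minimizing over $a$ gives
\begin{align*}
L^\cS_t=(L_C+L^\cX_{t+1}+L^\cS_{t+1})L.
\end{align*}

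The main obstacle is the inductive step in $s$. One must check that the inductive hypothesis is available at the intermediate point, which requires $F_t(s',a,\xi)\in\cS_{t+1}$ and $x+c_t(s,a)\in[0,t+1]$. Both hold because $F_t$ maps into $\cS_{t+1}$ and $c_t\in[0,1]$. One must also check that the $x$-Lipschitz bound at stage $t+1$ is applied uniformly in the original state, which is exactly what the first part of the lemma provides. That is why we prove the $x$-modulus first.
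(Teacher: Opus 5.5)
Your proposal is correct and follows essentially the same route as the paper: backward induction using $\bigl|\min_a g(a)-\min_a h(a)\bigr|\le\sup_a|g(a)-h(a)|$, the MDP argument for the $x$-modulus, and the bound $L_CL\|s-s'\|$ on the cost term. For the continuation term, you split through the intermediate point $(F_t(s',a,\xi),x+c_t(s,a))$ to pick up $(L^\cS_{t+1}+L^\cX_{t+1})L\|s-s'\|$; this is the step the paper defers to ``the same argument as \eqref{fin_soc_pol_eval_big_phi_lip}'', and you also check explicitly the domain conditions that the paper leaves implicit.
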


\begin{proof}
The proof of \eqref{soc_finite_lipsthiz_x_opt} follows along lines similar to those in Lemma \ref{lemma_fin_mdp_pol_opt_val_lip}.
We proceed to establish \eqref{soc_finite_lipsthiz_s_opt} by backward induction.
 For $t = T$, we have
 \begin{align}
     |\tilde V_T^\theta(s,x) - \tilde V_T^{\theta}(s',x)| = \large| \inf_{a \in \cA_T} \tilde{c}_T^\theta(s,x,a) - \inf_{a \in \cA_T} \tilde{c}_T^\theta(s',x,a)\large| \leq L_C L \|s - s'\|, \nonumber
 \end{align}
    where the last inequality follows from Assumptions \ref{assump_basic} and \ref{ass-lip_soc-1}.  
    Suppose the claim holds at stage $t+1$, then 
    \begin{align*}
        & |\tilde V_t^\theta(s,x) - \tilde V_t^{\theta}(s',x)|\\
        \leq & \sup_{a \in \cA_t} |\tilde{c}_T^\theta(s,x,a) - \tilde{c}_T^\theta(s',x,a) + \EE_{\xi \sim P_t}[ \tilde V_{t+1}^\theta( \tilde F_t(s,x,a,\xi)) - \tilde V_{t+1}^\theta( \tilde F_{t}(s',x,a,\xi))]| \\
        \overset{(a)}{\leq} & L_C L  \|s - s'\| + \sup_{a \in \cA_t}|\EE_{\xi \sim P_t}[ \tilde V_{t+1}^\theta( \tilde F_t(s,x,a,\xi)) - \tilde V_{t+1}^\theta( \tilde F_{t}(s',x,a,\xi))]| \\
        \overset{(b)}{\leq} & \left(L_C + L^\cX_{t+1} + L^\cS_{t+1}\right) L \|s - s'\|,
    \end{align*}
    where $(a)$ follows from Assumption  \ref{assump_basic} and \ref{ass-lip_soc-1}, while $(b)$ follows from the same argument as \eqref{fin_soc_pol_eval_big_phi_lip}.
\end{proof}

With Lemma \ref{lemma_fin_soc_pol_opt_lip_X} in place, we can show that for any $\epsilon>0$, there exists an $\epsilon$-optimal randomized policy $\pi$ satisfying \eqref{eq_finite_soc_policy_lip} with Lipschitz constant $L_\pi=O(\log(1/\epsilon)/\epsilon)$.

\begin{lemma} \label{lemma_close_to_opt_lip_policy_soc}
    For any $\epsilon > 0$, there exists a policy $\pi$ such that 
    $
            \tilde{V}^{\pi, \theta}_t(s,x) -  \tilde{V}^{ \theta}_t(s,x)  \leq \epsilon$ 
    and 
    $
    \norm{\pi_t(\cdot|s,x) -  \pi_t(\cdot|s',x') 
    }_1 
     \leq 
    L_\pi(\epsilon) \norm{(s,x) - (s',x')}, 
    $
where
\begin{align}\label{def_approx_smooth_policy_opt_soc}
    L_\pi(\epsilon) = \frac{8 (T+1)  (T+2) L_C d_{\cA} }{\epsilon}
    \log \rbr{
    \frac{4 \rbr{1 + 8T (L_C + L_0^\cS + L_0^\cX)L / \epsilon } (T+1)^2 L_C }{\epsilon }
    },
\end{align}
and $L_0^\cS$ and $L_0^\cX$ are defined as in Lemma \ref{lemma_fin_soc_pol_opt_lip_X}.
\end{lemma}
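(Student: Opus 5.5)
The plan is to mimic Lemma \ref{lemma_close_to_opt_lip_policy_mdp}. Since $\cA_t \subseteq \RR^{d_\cA}$ is now a continuum, the finite softmax is replaced by a Gibbs density over $\cA_t$ with respect to Lebesgue measure:
\begin{align*}
\pi_t^\eta(a|s,x) \propto \exp\rbr{-\eta \tilde{Q}_t^\theta(s,x,a)},
\end{align*}
where $\tilde{Q}_t^\theta(s,x,a) = \tilde{c}_t^\theta(s,x,a) + \EE_{\xi\sim P_t}[\tilde V^\theta_{t+1}(\tilde F_t(s,x,a,\xi))]$. There are three ingredients:
\begin{enumerate}
\item Lipschitz continuity of $\tilde Q^\theta_t$ in $(s,x)$, uniformly in $a$, which yields Lipschitz continuity of $\pi^\eta$.
\item Lipschitz continuity of $\tilde Q^\theta_t$ in $a$, which ensures that the Gibbs density puts enough mass near a minimizer.
\item The performance difference lemma, which converts per-stage suboptimality into a bound on $\tilde V^{\pi^\eta,\theta}_t - \tilde V^\theta_t$.
\end{enumerate}

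\textbf{Step 1 (regularity of $\tilde Q$).} Using Lemma \ref{lemma_fin_soc_pol_opt_lip_X}, Assumption \ref{assump_basic} and Assumption \ref{ass-lip_soc-1}, bound
\begin{align*}
|\tilde Q^\theta_t(s,x,a) - \tilde Q^\theta_t(s',x',a')| \le L_Q\, \|(s,x,a)-(s',x',a')\|,
\end{align*}
with $L_Q = \cO\rbr{(L_C + L^\cS_0 + L^\cX_0)L}$ in the $(s,a)$ directions. In the $x$ direction the modulus is $2(T-t+2)L_C$, exactly as in \eqref{ineq_lipschitz_optimal_q}. The $s$-dependence through $F_t$ and $c_t$ is handled by the same chain of inequalities as in the proof of Lemma \ref{lemma_fin_soc_pol_opt_lip_X}. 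Note that the $s$-Lipschitz constant is already bounded by $L_0^\cS$, which explains why only $L_0^\cS, L_0^\cX$ enter \eqref{def_approx_smooth_policy_opt_soc}.

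\textbf{Step 2 (Lipschitz policy).} For Gibbs measures one has
\begin{align*}
\|\mathrm{Gibbs}_\eta(q)-\mathrm{Gibbs}_\eta(q')\|_1 \le 2\eta\|q-q'\|_\infty,
\end{align*}
which follows by differentiating along $q_\lambda = q + \lambda(q'-q)$; the derivative of the log-density is bounded by $2\eta\|q-q'\|_\infty$. Combined with Step 1, this gives
\begin{align*}
\|\pi^\eta_t(\cdot|s,x)-\pi^\eta_t(\cdot|s',x')\|_1 \le 4\eta(T+2)L_C\,\|(s,x)-(s',x')\|
\end{align*}
for the $x$-part, and similarly for the $s$-part. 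The factor $8$ versus $4$ in \eqref{def_approx_smooth_policy_opt_soc} is absorbed here.

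\textbf{Step 3 (near-optimality), the main obstacle.} In the finite case one simply counted at most $|\cA|$ bad actions. Here, let $a^*$ minimize $\tilde Q^\theta_t(s,x,\cdot)$ and let $r = \epsilon'/L_Q$. Then the ball $B(a^*,r)\cap\cA_t$ lies inside the $\epsilon'$-optimal set $\cA^{\epsilon'}_t(s,x)$, whereas actions with gap larger than $2\epsilon'$ have density ratio at most $e^{-\eta\epsilon'}$ relative to $B(a^*,r)$. Comparing volumes, with a covering bound replacing $|\cA|$ by roughly $(1+2R/r)^{d_\cA}$, the mass of $2\epsilon'$-bad actions is at most
\begin{align*}
(1+2RL_Q/\epsilon')^{d_\cA}e^{-\eta\epsilon'}.
\end{align*}
The delicate part is handling $\cA_t$ near its boundary, so that the ball intersected with $\cA_t$ still has volume comparable to $r^{d_\cA}$. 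I would assume a regularity (e.g.\ convexity or a cone condition) implicit in compactness with radius $R$, or else work with covering numbers directly. Each stage then contributes at most
\begin{align*}
2\epsilon' + (T+1)L_C\,(1+2RL_Q/\epsilon')^{d_\cA}e^{-\eta\epsilon'},
\end{align*}
and the performance difference lemma sums this over $T+1$ stages. Choosing
\begin{align*}
\eta = \tfrac{d_\cA}{\epsilon'}\log\rbr{\tfrac{(1+\cdots)(T+1)^2L_C}{\epsilon'}}
\end{align*}
and $\epsilon' \asymp \epsilon/(T+1)$ gives total suboptimality at most $\epsilon$. Substituting this $\eta$ into Step 2 yields $L_\pi(\epsilon)$ of the stated form \eqref{def_approx_smooth_policy_opt_soc}, with $d_\cA$ appearing as a multiplicative factor.
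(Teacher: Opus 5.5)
Your Step 3 is where the argument fails for the lemma as stated. The alternative you mention only in passing (``work with covering numbers directly'') is precisely the missing step. Assumption \ref{ass-lip_soc-1} gives only that $\cA_t$ is compact with radius $R$, and compactness implies no cone or convexity condition. In particular, $\cA_t$ may be Lebesgue-null (a finite set, a curve), in which case your Gibbs density with respect to Lebesgue measure is undefined. Even when $\cA_t$ has positive measure, the ratio $\mathrm{vol}(\cA_t)/\mathrm{vol}(B(a^*,r)\cap\cA_t)$ can be arbitrarily large near a cusp, so the bound $(1+2RL_Q/\epsilon')^{d_\cA}e^{-\eta\epsilon'}$ on the bad mass does not follow. (Under convexity with nonempty interior your argument does close: $a^*+\tfrac{r}{D}(\cA_t-a^*)\subseteq\cA_t\cap B(a^*,r)$ with $D=\mathrm{diam}(\cA_t)$ bounds that ratio by $(D/r)^{d_\cA}$.) The paper avoids geometry altogether. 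It restricts actions to a $\vartheta$-net $\cA_t^\vartheta$, and uses the $a$-Lipschitz bound \eqref{lip_q_in_a_soc} (your Step 1) together with the performance difference lemma to show that the restricted problem loses at most $(T+1)(L_C+L_0^\cS+L_0^\cX)L\vartheta$ in optimal value. It then applies Lemma \ref{lemma_close_to_opt_lip_policy_mdp} to the finite-action problem with $|\cA_t^\vartheta|\le(1+2R/\vartheta)^{d_\cA}$. Net cardinality replaces volume and is valid for any set of radius $R$; this is exactly where $d_\cA$ and the constants $L_0^\cS,L_0^\cX$ inside the logarithm come from. In your language, the Gibbs policy should use counting measure on the net, not Lebesgue measure, as its base measure.

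Separately, ``similarly for the $s$-part'' does not produce the displayed constant. The $s$-modulus of $\tilde Q^\theta_t$ is $(L_C+L^\cX_{t+1}+L^\cS_{t+1})L=L^\cS_t$, which can be exponential in $T$. It multiplies $\eta$, so it would enter $L_\pi$ multiplicatively rather than inside the logarithm. Your explanation that $L_0^\cS$ appears in \eqref{def_approx_smooth_policy_opt_soc} because of the $s$-dependence conflates it with the $a$-Lipschitz constant, which only governs the size of the net. The paper's proof, which goes through Lemma \ref{lemma_close_to_opt_lip_policy_mdp}, certifies the displayed $L_\pi(\epsilon)$ only in the $x$-direction. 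A joint $(s,x)$ modulus from either construction carries an additional term of order $\eta L^\cS_0$. This is harmless downstream, since $L_\pi(\epsilon)$ enters Theorem \ref{thrm_sample_opt_soc_finite} only inside a logarithm, but it should not be claimed with the stated constant.
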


 \begin{proof}
 Let $\tilde{Q}^\theta_t$ denote the optimal Q-function of $\tilde{\cM}_\theta$, i.e.,
 \begin{align*}
     \tilde{Q}^\theta_t(s,x,a) = \tilde{c}^\theta_t(s,x,a) + \EE_{\xi \sim P_t} \sbr{\tilde{V}_{t+1}^\theta (F_t(s,a, \xi), x + c_t(s,a)) }.
 \end{align*}
 We proceed to show that $\tilde{Q}^\theta_t(s,x,a)$ is Lipschitz with respect to $a$ for every $(s,x) \in \cS_t \times \cX_t$. 
 Indeed, we have 
 \begin{align}
     & \abs{\tilde{Q}^\theta_t(s,x,a) - \tilde{Q}^\theta_t(s,x,a')} \nonumber \\
      \leq &  \abs{
     \tilde{c}^\theta_t(s,x,a) - \tilde{c}^\theta_t(s,x,a')
     } + \EE_{\xi \sim P_t}
     \sbr{
     \abs{\tilde{V}_{t+1}^\theta (F_t(s,a, \xi), x + c_t(s,a))  - 
     \tilde{V}_{t+1}^\theta (F_t(s,a', \xi), x + c_t(s,a)) } } \nonumber \\
     & ~~~ +   \EE_{\xi \sim P_t} \sbr{ \abs{
     \tilde{V}_{t+1}^\theta (F_t(s,a, \xi), x + c_t(s,a'))  - 
     \tilde{V}_{t+1}^\theta (F_t(s,a', \xi), x + c_t(s,a'))
     }
     }  \nonumber \\
      \leq  & 
     ( L_C   
     + L_{t+1}^{\cS}  
     + L_{t+1}^{\cX} ) L   \norm{a-a'}, \label{lip_q_in_a_soc}
 \end{align}
 where the last inequality follows from Assumption \ref{ass-lip_soc-1} and Lemma \ref{lemma_fin_soc_pol_opt_lip_X}.

 For any $\vartheta > 0$ and any $0 \leq t \leq T$, let $\cA_t^\vartheta$ be an $\vartheta$-net of $\cA_t$.
 Let $\tilde{\cM}^\vartheta_\theta$ be the same as $\tilde{\cM}_\theta$, except that its action space is ${\cA}_t^\vartheta$ instead of $\cA_t$.
 Let us denote $\tilde{V}^{\vartheta,\theta}_t$ the optimal value function of $\tilde{\cM}^\vartheta_\theta$.
 Let $a_t(s,x) \in \Argmin_{a \in \cA_t^\vartheta} \tilde{Q}^\theta_t(s,x,a)$, and consider the policy $\pi^\vartheta$ defined by
 $
 \pi^\vartheta_t (\cdot|s,x) = \delta_{a_t(s,x)}.
 $
 From the performance difference lemma, we have 
 \begin{align*}
     \tilde{V}^{\pi^\vartheta, \theta}_0 -  \tilde{V}^{ \theta}_0
     & = \EE_{P}^{\pi^\vartheta} \sbr{
     \tsum_{t=0}^T \rbr{ \min_{a \in \cA_t^\vartheta} \tilde{Q}_t^\theta (S_t, X_t, a) - \min_{a \in \cA_t} \tilde{Q}_t^\theta (S_t, X_t, a)  }
     } \\ 
     & \leq (T+1) ( L_C   
     + L_0^{\cS}  
     + L_0^{\cX} ) L   \vartheta,
 \end{align*}
 where the last inequality follows from \eqref{lip_q_in_a_soc}.
 Consequently, we must have 
 \begin{align}\label{ineq_soc_finite_action_opt_value}
 \tilde{V}^{\theta}_0 \leq  \tilde{V}^{\vartheta, \theta}_0
 \leq \tilde{V}^{\theta}_0 + (T+1) ( L_C   
     + L_0^{\cS}  
     + L_0^{\cX} ) L   \vartheta.
 \end{align}
 Clearly, $\tilde{\cM}^\vartheta_\theta$ can also be viewed as an MDP with  a finite action space $\cA_t^\vartheta$. 
 Hence we can apply Lemma \ref{lemma_close_to_opt_lip_policy_mdp} and obtain that for any $\epsilon > 0$, there exists a policy $\pi$ such that
 $
   \tilde{V}^{\pi, \theta}_0(s,x) -  \tilde{V}^{\vartheta, \theta}_0(s,x)  \leq \frac{\epsilon}{2} 
 $
 and 
 \begin{align*}
     \norm{
     \pi_t(\cdot|s,x) -  \pi_t(\cdot|s,x') 
     }_1 
     & \leq 
     \frac{8 (T+1)  (T-t+2) L_C}{\epsilon}
     \log \rbr{
     \frac{4 \max_t \abs{\cA_t^\vartheta} (T+1)^2 L_C }{\epsilon }
     } \abs{x - x'}.
 \end{align*}
 Combining the above observations with \eqref{ineq_soc_finite_action_opt_value}, we obtain 
 \begin{align*}
             \tilde{V}^{\pi, \theta}_0(s,x) -  \tilde{V}^{\theta}_0(s,x) & \leq \frac{\epsilon}{2} 
             + (T+1) ( L_C   
     + L_0^{\cS}  
     + L_0^{\cX} ) L   \vartheta.
 \end{align*}
 We can then conclude the proof by taking $\vartheta = \frac{\epsilon}{2(T+1) ( L_C   
 + L_0^{\cS}  
 + L_0^{\cX} ) L} $,
 and noting that
 $
     \abs{\cA^\vartheta_t} \leq (1 + 2R/\vartheta)^{d_{\cA}}
 $.
 \end{proof}

With Lemma \ref{lemma_close_to_opt_lip_policy_soc} established, we can substitute $\pi^\theta$ in \eqref{ineq_policy_opt_soc_basic} with a Lipschitz continuous randomized policy and invoke Theorem \ref{thrm_sample_eval_soc_finite} to control the left-hand side $(a)$ therein. It remains to control the right-hand side of $(b)$ in \eqref{ineq_policy_opt_soc_basic}. Since the policy $\hat{\pi}_n^\theta$ is sample-dependent, Lemma \ref{lemma_fin_soc_pol_eval_berns_sum} cannot be directly applied. Consequently, we proceed to establish its counterpart for potentially sample-dependent policies.

\begin{lemma}\label{lemma_fin_soc_pol_opt_bern_rad}
    Let $\pi$ be a potentially sample-dependent policy satisfying \eqref{eq_finite_soc_policy_lip}. For any $\eta > 0$ and $\delta \in (0,1)$, with probability at least $1-\delta$, we have
    \begin{align}
        \abs{(\hat{V}_{t,n}^{\pi, \theta} - \tilde{V}_t^{\pi, \theta})(s,x)} \leq & \EE_{P}^\pi \left[\left. \tsum_{h=t}^{T-1} b_{h,n}^{\pi, \theta}(S_h, X_h, A_h) \right| (S_t, X_t) = (s,x) \right] \nonumber \\
        & + 12TL_0^{\pi, \cS} \eta + \sqrt{\tfrac{24T^3L_C L_0^{\pi, \cS} \eta }{n}\log(\tfrac{2T^2 (1 + 2R/\eta)^{d_\cS + d_\cA}}{\delta \eta})},\nonumber
    \end{align}
    for any $0 \leq t \leq T-1, s \in \cS_t, x \in [0,t]$, where
    \begin{align}
        b_{t,n}^{\pi, \theta}(s,x,a) := \sqrt{ \tfrac{2}{n} \Var_{\xi \sim P_t} \left(\hat{V}_{t+1,n}^{\pi, \theta}(\tilde{F}_t(s,x,a,\xi))\right) \log(\tfrac{2T^2 (1 + 2R/\eta)^{d_\cS + d_\cA}}{\delta \eta})} + \tfrac{2TL_C}{3n} \log(\tfrac{2T^2(1 + 2R/\eta)^{d_\cS + d_\cA}}{\delta \eta}). \nonumber
    \end{align}
\end{lemma}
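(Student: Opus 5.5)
The plan mirrors the proof of Lemma \ref{lemma_fin_mdp_pol_opt_bern_rad}, with the $\eta$-net now covering $\cS_t \times \cX_t \times \cA_t$ instead of only $\cX_t$.

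\textbf{Step 1 (union bound on a net).} For each $t$, take $\eta$-nets $\cN_{\cS_t}$, $\cN_{\cX_t}$ and $\cN_{\cA_t}$ of $\cS_t$, $[0,t]$ and $\cA_t$. Their cardinalities are at most $(1+2R/\eta)^{d_\cS}$, $T/\eta$ and $(1+2R/\eta)^{d_\cA}$. Fix a grid point $(s_\eta,x_\eta,a_\eta)$ and a stage $t$. The function $\hat V^{\pi,\theta}_{t+1,n}$ depends only on the samples at stages $h \ge t+1$, which are independent of the stage-$t$ samples. So, conditionally on the later samples, Bernstein's inequality applies to
\[
(\EE_{\hat P_{t,n}}-\EE_{P_t})\bigl[\hat V^{\pi,\theta}_{t+1,n}(\tilde F_t(s_\eta,x_\eta,a_\eta,\xi))\bigr],
\]
with range bounded by $TL_C$. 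A union bound over $t$ and all grid points then gives $b^{\pi,\theta}_{t,n}(s_\eta,x_\eta,a_\eta)$ as a bound, with the stated logarithmic factor, with probability $1-\delta$. Since $\pi$ may depend on the samples, this conditioning is exactly why we bound pointwise on a net rather than using the martingale decomposition.

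\textbf{Step 2 (passing from the net to arbitrary points).} Write the one-step recursion
\[
(\hat V^{\pi,\theta}_{t,n}-\tilde V^{\pi,\theta}_t)(s,x)=\EE_{a\sim\pi_t(\cdot|s,x)}\bigl[(\EE_{\hat P_{t,n}}-\EE_{P_t})[\hat V^{\pi,\theta}_{t+1,n}\circ\tilde F_t]+\EE_{P_t}[(\hat V^{\pi,\theta}_{t+1,n}-\tilde V^{\pi,\theta}_{t+1})\circ\tilde F_t]\bigr].
\]
For a general $(s,x,a)$, replace it by its nearest grid point. The map $(s,a)\mapsto \hat V^{\pi,\theta}_{t+1,n}(F_t(s,a,\xi),x+c_t(s,a))$ is Lipschitz with modulus $(L^{\pi,\cS}_{t+1}+L^{\pi,\cX}_{t+1})L$ by Lemma \ref{lemma_fin_soc_pol_eval_val_lip_X} and Assumption \ref{ass-lip_soc-1}. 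The shift in $x$ is again absorbed via $L^{\pi,\cX}_{t+1}$, because the transition is translation-equivariant in $x$. The same moduli hold for the empirical value function. Both $L^{\pi,\cX}_0$ and $L^{\pi,\cS}_0$ are dominated by $L^{\pi,\cS}_0$, so each replacement costs $O(L_0^{\pi,\cS}\eta)$ per coordinate. Accumulating over the three coordinates and over both expectations gives the constant $12$ per stage in spirit.

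\textbf{Step 3 (Lipschitz continuity of the variance, the main obstacle).} We must transfer $b_{t,n}$ from the grid point back to $(s,x,a)$. Using $|\Var(Y)-\Var(Z)|\le 2\|Y+Z\|_\infty\|Y-Z\|_\infty$-type bounds with $\|\hat V\|_\infty\le TL_C$, the conditional variance is Lipschitz in $(s,x,a)$ with modulus $O(TL_CL_0^{\pi,\cS})$. This is the analogue of \eqref{ineq_var_lipschitz_eval}, but it now requires the joint Lipschitz property through $\tilde F_t$ rather than pure translation. Then $\sqrt{u+v}\le\sqrt u+\sqrt v$ gives the extra term $\sqrt{24TL_CL_0^{\pi,\cS}\eta\log(\cdot)/n}$ per stage.

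\textbf{Step 4 (unrolling the recursion).} This yields a one-step inequality: $|\Delta_t(s,x)|$ is at most $\EE_{a}[b_{t,n}(s,x,a)+\EE_{P_t}|\Delta_{t+1}\circ\tilde F_t|]$ plus the additive slack terms. Unrolling it from $t$ to $T-1$ gives the expectation of $\sum_h b_{h,n}$ along trajectories of $\pi$. The additive slack accumulates a factor $T$, giving $12TL_0^{\pi,\cS}\eta$ and, after moving the factor $T$ inside the square root, the $T^3$ term, which matches the statement.
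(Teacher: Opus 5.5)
Your proposal follows the paper's proof essentially step for step. It uses $\eta$-nets over $\cS_t$, $[0,t]$ and $\cA_t$ with Bernstein plus a union bound at grid points, transfers both the value-difference term and the conditional variance back from the grid by Lipschitz continuity (translation-equivariance in $x$ together with Assumption \ref{ass-lip_soc-1}), and unrolls the one-step recursion, arriving at the same constants and logarithmic factor. As in the paper, two conditions are used implicitly. The independence claim in Step 1 needs each $\pi_h$ to depend only on samples from stages $\ge h$, which holds for the softmax policy built from empirical $Q$-functions in Theorem \ref{thrm_sample_opt_soc_finite} but not for arbitrary sample dependence. The domination $L^{\pi,\cX}_{t+1}\le L^{\pi,\cS}_0$ needs $L$ bounded below, e.g.\ $L\ge 1$.
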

\begin{proof}
    For any $0 \leq t \leq T$, let $\cN_{\cX_t}$ (resp. $\cN_{\cS_t}$ and $\cN_{\cA_t}$) denote the $\eta$-net for $[0,t]$ (resp. $\cS_t$ and $\cA_t$). Similar to \eqref{fin_mdp_pol_opt_gen_bound_disc}, applying Bersntein's inequality with the union bound yields
    \begin{align}
        \abs{(\EE_{\xi \sim \hat{P}_{t,n}} - \EE_{\xi \sim P_t})\left[\hat{V}_{t+1,n}^{\pi, \theta}(F_t(s_\eta,a_\eta,\xi), x_\eta + c_t(s_\eta,a_\eta)\right]} \leq b_{t,n}^{\pi, \theta}(s_\eta, x_\eta, a_\eta), \label{fin_soc_pol_opt_gen_bound_disc}
    \end{align}
    for all $0 \leq t \leq T, s_\eta \in \cN_{\cS_t}, x_\eta \in \cN_{\cX_t}$ and $a_\eta \in \cN_{\cA_t}$, with probability at least $1-\delta$. For each $x \in [0,t]$ (resp. $s \in \cS_t$ and $a \in \cA_t$), let $x_\eta$ be its closest point on $\cN_{\cX_t}$ (resp. $s_\eta \in \cN_{\cS_t}$ and $a_\eta \in \cN_{\cA_t}$). Then, following the same lines as in \eqref{ineq_opt_val_err_uniform_raw}, we have
    \begin{align}
        & \abs{(\tilde{V}_{t,n}^{\pi, \theta} - \tilde{V}_t^{\pi, \theta})(s,x)} \nonumber \\
        \overset{(a)}{\leq} & 2(2(L_{t+1}^{\pi, \cX} + L_{t+1}^{\pi, \cS})L + L_{t+1}^{\pi, \cX})\eta + \EE_{a \sim \pi_t(\cdot|s,x)}\left[ b_{t,n}^{\pi, \theta}(s_\eta, x_\eta, a_\eta) + E_{\xi \sim P_t} \left[ \abs{(V_{t+1, n}^{\pi, \theta} - \tilde{V}_{t+1}^{\pi, \theta})(\tilde{F}_t(s_\eta,x_\eta,a_\eta,\xi))}\right]\right] \nonumber \\
        \overset{(b)}{\leq} & 6L_0^{\pi, \cS} \eta + \EE_{a \sim \pi_t(\cdot|s,x)}\left[ b_{t,n}^{\pi, \theta}(s_\eta, x_\eta, a_\eta) + E_{\xi \sim P_t} \left[ \abs{(V_{t+1, n}^{\pi, \theta} - \tilde{V}_{t+1}^{\pi, \theta})(\tilde{F}_t(s_\eta,x_\eta,a_\eta,\xi))}\right]\right], \label{ineq_opt_val_err_uniform_raw_soc}
    \end{align}
    where $(a)$ follows from Lemma \ref{lemma_fin_soc_pol_eval_val_lip_X}, and $(b)$ follows from \eqref{soc_finite_lipschitz_value_pi_s}, which implies $(L_{t+1}^{\pi, \cS} + L_{t+1}^{\pi, \cX})L \leq L_t^{\pi, \cS}$. Note that the transition \eqref{cost_transition_aug_soc_finite} in the augmented SOC $\tilde{\cM}^\theta$ is translation-equivariant in $x$.
    Combining this observation with Assumption \ref{ass-lip_soc-1} and following similar lines as in \eqref{ineq_var_lipschitz_eval}, we obtain
    \begin{align}
        & \abs{\Var_{\xi \sim P_t}\left(\hat{V}_{t+1,n}^{\pi,\theta}(F_t(s,a,\xi), x + c_t(s,a))\right) - \Var_{\xi \sim P_t}\left(\hat{V}_{t+1,n}^{\pi,\theta}(F_t(s',a',\xi), x' + c_t(s',a'))\right)} \nonumber \\
        \leq & 4(T-t) L_C L_t^{\pi, \cS}\left(\|s-s'\| +  \abs{x-x'} + \|a - a'\|\right). \label{ineq_var_lipschitz_eval_soc}
    \end{align}
    With \eqref{ineq_opt_val_err_uniform_raw_soc} and \eqref{ineq_var_lipschitz_eval_soc} in place, the claim follows from the same argument as in Lemma \ref{lemma_fin_mdp_pol_opt_bern_rad}.
\end{proof}


With Lemma \ref{lemma_fin_soc_pol_opt_lip_X}, \ref{lemma_close_to_opt_lip_policy_soc}, and \ref{lemma_fin_soc_pol_opt_bern_rad} in place, we are ready to establish the sample complexity for estimating the optimal risk $\cR^*$ in \eqref{eq_risk_finite_horizon_soc}. We further show that the result naturally leads to a procedure for constructing an $\epsilon$-optimal policy that satisfies the Lipschitz property in \eqref{eq_finite_soc_policy_lip}.

\begin{theorem}\label{thrm_sample_opt_soc_finite}
    For any $\epsilon > 0$ and $\delta \in (0,1)$, take
    \begin{align}
        n = \cO \left(\tfrac{T^3 L_C^2}{\epsilon^2} \log \left(\tfrac{T^2\left[T^6 L_C^3 L^{T+1} d_\cA \log(T^5 L_C^2 L^{T+2}/\epsilon^2)/\epsilon^3\right]^{d_\cS + d_\cA + 1} R^{d_\cS + d_\cA} (L_\Theta R_\Theta/\epsilon)^{d_\Theta}}{\delta}\right)\right). \nonumber
    \end{align}
    Then with probability at least $1-\delta$, we have
    \begin{align*}
        \abs{\hat \cR_n^*(s) - \cR^*(s)} \leq \epsilon, ~ \forall s \in \cS_0. 
    \end{align*}
\end{theorem}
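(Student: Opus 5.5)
The plan mirrors the proof of Theorem \ref{thrm_sample_fin_mdpite_opt}, with the nets over $\cS_t$ and $\cA_t$ added as in Lemma \ref{lemma_fin_soc_pol_opt_bern_rad}. Fix $\theta \in \Theta$. By Lemma \ref{lemma_close_to_opt_lip_policy_soc}, applied once to $\tilde{\cM}_\theta$ and once to its empirical counterpart, there are $L_\pi(\epsilon)$-Lipschitz policies $\pi_\epsilon^\theta$ (sample independent) and $\hat{\pi}_\epsilon^\theta$ (sample dependent). They are $\epsilon/4$-optimal for $\tilde{V}^\theta_t$ and $\hat{V}^\theta_{t,n}$ respectively, with $L_\pi(\epsilon)$ given by \eqref{def_approx_smooth_policy_opt_soc}. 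Lemma \ref{lemma_fin_soc_pol_opt_lip_X} supplies the Lipschitz constants of the empirical optimal value needed for this. As in \eqref{ineq_opt_err_at_emp_policy}--\eqref{ineq_opt_err_at_true_policy}, the two-sided sandwich reduces the problem to bounding $\hat{V}^{\pi,\theta}_{t,n}-\tilde{V}^{\pi,\theta}_t$ for $\pi=\pi^\theta_\epsilon$ and for $\pi=\hat{\pi}^\theta_\epsilon$, each to accuracy $\epsilon/4$ or $\epsilon/2$.

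For the upper side, $\pi_\epsilon^\theta$ does not depend on the samples. Theorem \ref{thrm_sample_eval_soc_finite} with $L_\pi=L_\pi(\epsilon)$ then gives the bound at $n=\tilde\cO(T^2L_C^2/\epsilon^2)$, which is dominated by the final bound.

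For the lower side, apply Lemma \ref{lemma_fin_soc_pol_opt_bern_rad} with $\pi=\hat{\pi}_\epsilon^\theta$. Write $\iota=\log(\frac{2T^2(1+2R/\eta)^{d_\cS+d_\cA}}{\delta\eta})$. Apply Cauchy--Schwarz to the sum of the $b^{\pi,\theta}_{h,n}$ terms, which gives a factor $\sqrt{T}$. Then bound the summed conditional variances of $\hat{V}^{\pi,\theta}_{h+1,n}$ by $2(T-t)^2L_C^2+2(T-t)(\cB^{\pi,\theta}_n)^2$, using $\Var(Y+Z)\le 2\Var(Y)+2\EE Z^2$ together with Lemma \ref{lemma_fin_soc_pol_eval_bell_tot_var}. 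The result is the self-bounding inequality
\[
\cB^{\pi,\theta}_n \le \sqrt{\tfrac{2T(2T^2L_C^2+2T(\cB^{\pi,\theta}_n)^2)}{n}\iota}+\tfrac{2T^2L_C}{3n}\iota+12TL_0^{\pi,\cS}\eta+\sqrt{\tfrac{24T^3L_CL_0^{\pi,\cS}\eta}{n}\iota}.
\]
Assuming $n\ge 16T^2\iota$, we rearrange to $\cB^{\pi,\theta}_n\lesssim T^{3/2}L_C\sqrt{\iota/n}+T^2L_C\iota/n+TL_0^{\pi,\cS}\eta+\sqrt{TL_CL_0^{\pi,\cS}\eta}$. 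Choosing $\eta\asymp \epsilon^2/(TL_CL_0^{\pi,\cS}(\epsilon))$ and $n\asymp T^3L_C^2\iota/\epsilon^2$ then yields $\cB^{\pi,\theta}_n\le\epsilon/2$.

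The main obstacle is bookkeeping of the logarithmic factor $\iota$. By Lemma \ref{lemma_fin_soc_pol_eval_val_lip_X}, $L_0^{\pi,\cS}(\epsilon)=\cO(T^3L_CL_\pi(\epsilon)L^{T+1})$. By \eqref{def_approx_smooth_policy_opt_soc}, $L_\pi(\epsilon)=\cO(T^2L_Cd_\cA\epsilon^{-1}\log(T^5L_C^2L^{T+2}/\epsilon^2))$, after using $L_0^\cS+L_0^\cX=\cO(T^2L_CL^{T+1})$ from Lemma \ref{lemma_fin_soc_pol_opt_lip_X}. Hence $1/\eta=\cO(T^6L_C^3L^{T+1}d_\cA\log(\cdot)/\epsilon^3)$, and substituting this into $(1+2R/\eta)^{d_\cS+d_\cA}/\eta$ produces the bracket raised to the power $d_\cS+d_\cA+1$, times $R^{d_\cS+d_\cA}$, in the stated $n$. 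Care is needed that the exponential-in-$T$ Lipschitz constants appear only inside the logarithm, and that the $T$-dependence of $L_\pi(\epsilon)$ is absorbed there as well.

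Finally, take an $\epsilon/(8L_\Theta)$-net of $\Theta$, of cardinality $(1+16L_\Theta R_\Theta/\epsilon)^{d_\Theta}$, and union bound over it; this contributes the factor $(L_\Theta R_\Theta/\epsilon)^{d_\Theta}$. Extend to all $\theta$ via the Lipschitz-in-$\theta$ part of Assumption \ref{assump_basic}, exactly as at the end of Theorem \ref{thrm_sample_fin_mdpite_eval}. This gives $|\hat{V}^\theta_{0,n}(s,0)-\tilde{V}^\theta_0(s,0)|\le\epsilon$ uniformly in $\theta$ and $s$. Because $\cR^*(s)=\min_\theta\{\tilde{V}^\theta_0(s,0)+f_\theta(0)\}$ by Theorem \ref{cor_dp_optimal_value_finite_soc}, and likewise for $\hat{\cR}^*_n$, taking the minimum over $\theta$ gives the claim.
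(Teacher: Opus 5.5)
Your proposal is correct and follows the paper's proof essentially step for step. You use the same sandwich with the $\epsilon/4$-optimal Lipschitz policies from Lemma~\ref{lemma_close_to_opt_lip_policy_soc}, Theorem~\ref{thrm_sample_eval_soc_finite} for the sample-independent policy, and Lemma~\ref{lemma_fin_soc_pol_opt_bern_rad} with Cauchy--Schwarz and the total-variance bound to reach the same self-bounding inequality for $\hat{\pi}_\epsilon^\theta$; your choice $\eta\asymp\epsilon^2/(TL_CL_0^{\pi,\cS}(\epsilon))$, the logarithmic bookkeeping, and the final covering over $\Theta$ also match the paper.
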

\begin{proof}
    Let us first fix $\theta \in \Theta$. For any $\epsilon > 0$,  similar to \eqref{ineq_opt_err_at_emp_policy} and \eqref{ineq_opt_err_at_true_policy}, from Lemma \ref{lemma_close_to_opt_lip_policy_soc}, there exists an $L_\pi(\epsilon)$-Lipschitz policy $\pi_\epsilon^\theta$ (resp. $\hat{\pi}_\epsilon^\theta$), such that
    \begin{align}
        \hat{V}_{t,n}^{\theta}(s,x) - \tilde{V}_t^{\theta}(s,x) & \geq (\hat{V}_{t,n}^{\hat{\pi}_\epsilon^\theta, \theta}(s,x) - \tfrac{\epsilon}{4}) - \tilde{V}_t^{\hat{\pi}_\epsilon^\theta, \theta}(s,x), \label{fin_soc_pol_opt_close_comp_emp} \\
        \hat{V}_{t,n}^{\theta}(s,x) - \tilde{V}_t^{\theta}(s,x) & \leq \hat{V}_{t,n}^{\pi_\epsilon^\theta, \theta}(s,x)  - (\tilde{V}_t^{\pi_\epsilon^\theta, \theta}(s,x) - \tfrac{\epsilon}{4}). \label{fin_soc_pol_opt_close_comp_real}
    \end{align}
    Since $\pi_\epsilon^\theta$ does not depend on the samples, 
    one can readily invoke Theorem \ref{thrm_sample_eval_soc_finite} to control 
    $\hat{V}_{t,n}^{\pi_\epsilon^\theta, \theta}(s,x)  - \tilde{V}_t^{\pi_\epsilon^\theta, \theta}(s,x)$.
    Consequently, it remains  to control $\hat{V}_{t,n}^{\hat{\pi}_\epsilon^\theta, \theta}(s,x)  - \tilde{V}_t^{\hat{\pi}_\epsilon^\theta, \theta}(s,x)$. 
    Going forward, let us denote $\pi = \hat{\pi}_\epsilon^\theta$.
    For a given $\eta > 0$ to be specified later, from Lemma \ref{lemma_fin_soc_pol_opt_bern_rad} and the same argument leading up to \eqref{fin_mdp_pol_opt_B_final_ineq}, we obtain
    \begin{align}
        \cB_n^{\pi, \theta} \leq 4T^{3/2}L_C\sqrt{\tfrac{1}{n} \log(\tfrac{2}{\delta'})} + \tfrac{4T^2 L_C}{3n} \log(\tfrac{2}{\delta'}) + 24TL_0^{\pi, \cS}(\epsilon) \eta + \sqrt{6 T L_C L_0^{\pi, \cS}(\epsilon) \eta}, \nonumber
    \end{align}
    where $\delta' = \tfrac{\delta \eta}{T^2 (1+2R/\eta)^{d_\cS + d_\cA}}$ and $\cB_n^{\pi, \theta}$ is defined as in \eqref{fin_soc_pol_eval_sup_def}. Thus, in order to ensure $\cB_n^{\pi, \theta} \leq \tfrac{\epsilon}{2}$, it suffices to choose $\eta = \tfrac{\epsilon^2}{96 T L_C L_0^{\pi, \cS}(\epsilon)}$ and
    \begin{align}
        n = \cO \left( \tfrac{T^3 L_C^2}{\epsilon^2} \log \left(\tfrac{1}{\delta'} \right)\right) \overset{(a)}{=} \cO \left( \tfrac{T^3 L_C^2}{\epsilon^2} \log \left(\tfrac{T^2 \left[T^4 L_C L_\pi(\epsilon)L^{T+1}/\epsilon^2\right]^{d_\cS + d_\cA + 1} R^{d_\cS + d_\cA}}{\delta} \right)\right), \label{soc_sample_opt_raw}
    \end{align}
    where in $(a)$ we substitute the definitions of $\delta'$ and $\eta$, and use Lemma \ref{lemma_fin_soc_pol_eval_val_lip_X}, which implies $L_0^{\pi, \cS}(\epsilon) = \cO(T^3 L_C L_\pi(\epsilon) L^{T+1})$. 
    From  Lemma \ref{lemma_fin_soc_pol_opt_lip_X}, we have $L_0^{\cX} = \cO(T L_C)$ and $L_0^{\cS} = \cO(T^2 L_C L^{T+1})$.
    Combining this with the definition of $L_\pi(\epsilon)$ in \eqref{def_approx_smooth_policy_opt_soc}, we obtain from \eqref{soc_sample_opt_raw} that 
    \begin{align}
        n = \cO \left(\tfrac{T^3 L_C^2}{\epsilon^2} \log \left(\tfrac{T^2\left[T^6 L_C^3 L^{T+1} d_\cA \log(T^5 L_C^2 L^{T+2}/\epsilon^2)/\epsilon^3\right]^{d_\cS + d_\cA + 1} R^{d_\cS + d_\cA}}{\delta}\right)\right). \nonumber
    \end{align}
    The desired claim then follows from the same covering argument over  $\theta \in \Theta$  in Theorem \ref{thrm_sample_fin_mdpite_eval}.
\end{proof}

In view of Theorem \ref{thrm_sample_opt_soc_finite}, the sample complexity for estimating the optimal risk $\cR^*$ up to an $\epsilon$-accuracy can be bounded by $\cO(\tfrac{T^5 L_C^2 (d_\cS + d_\cA + d_\Theta)}{\epsilon^2} \log(\tfrac{L L_\Theta R_\Theta}{\delta \epsilon}))$.
Similar to our discussion following Theorem \ref{thrm_sample_eval_soc_finite}, the obtained sample complexity for SOCs increases from that of MDPs (Theorem \ref{thrm_sample_fin_mdpite_opt}) by a factor of $\cO(T)$ due to the Lipschitz constant of the value functions scaling exponentially with respect to the number of stages. 
   In addition to estimating the optimal risk $\cR^*$, Theorem \ref{thrm_sample_opt_soc_finite} also provides a procedure for constructing a $(\tfrac{3\epsilon}{2})$-optimal policy for the policy optimization problem \eqref{eq_risk_finite_horizon_soc}. In particular, using the definition of $\cB_n^{\pi, \theta}$, \eqref{fin_soc_pol_opt_close_comp_emp} and \eqref{fin_soc_pol_opt_close_comp_real}, we obtain
    \begin{align*}
          \tilde{V}_t^{\hat{\pi}_\epsilon^\theta, \theta}(s,x) 
           \leq \tilde{V}_{t}^{\pi_\epsilon^\theta, \theta}(s,x) + \frac{\epsilon}{4} + \cB^{\hat{\pi}_\epsilon^\theta,\theta}_n + \cB^{\pi_\epsilon^\theta,\theta}_n
          \leq \tilde{V}^\theta_t + \frac{\epsilon}{2} + \cB^{\hat{\pi}_\epsilon^\theta,\theta}_n + \cB^{\pi_\epsilon^\theta,\theta}_n
      \leq \tilde{V}^\theta_t + \frac{3 \epsilon}{2},
    \end{align*}
    where the last inequality uses $\cB^{\hat{\pi}_\epsilon^\theta,\theta}_n + \cB^{\pi_\epsilon^\theta,\theta}_n \leq \epsilon$.
    Since the above holds
    uniformly over all $\theta \in \Theta$,
    from Theorem \ref{cor_dp_optimal_value_finite_soc}, 
    we obtain   
    $
    \cR^{\hat{\pi}_\epsilon^{\hat{\theta}^*}} (s) \leq \cR^* (s) + \frac{3\epsilon}{2},
    $
    where 
    $\hat{\theta}^* \in \Argmin_{\theta \in \Theta} \tilde{V}_0^{\hat{\pi}_\theta, \theta}(s,0) + f_\theta(0)$.

%% file: applications.tex

\section{Applications}\label{sec_application}

We now turn our attention to instantiate the  sample complexity results obtained in Sections \ref{sec_risk_mdp} and \ref{sec_soc} to concrete risk functionals $\cR^\pi  \coloneqq  \cR_{\Theta, f}(\cdot)$ of the form \eqref{eq_risk_functional}. 
In particular, we focus on the case where $\cR_{\Theta, f}$ corresponds to the distributionally robust functional defined as in \eqref{eq_phi_dro_f_form}. 
That is,
\begin{align}\label{app_risk_phi_dual}
\cR_{\Theta, f} (X) =  \min_{\lambda \geq 0, \mu \in \RR}
    \cbr{ f_{\lambda, \mu}(X) \coloneqq  \lambda \tau + \mu + 
    \EE_P \sbr{ (\lambda \phi)^* (X - \mu) } } .
\end{align}
As mentioned in Example \ref{example_phi_divergence_dro}, the above problem is the dual of 
\begin{align}\label{app_risk_phi}
 \sup_{\zeta \in \mathfrak{D}} \int_{\Omega} X(\omega) \zeta(\omega) d P(\omega) , ~~~ \mathrm{s.t.}, ~ ~~ \int_{\Omega} \phi(\zeta(\omega)) d P(\omega) \leq \tau,
\end{align}
where the divergence function $\phi$ is closed and convex, satisfying $\phi(1) = 0$ and $\phi(x) = \infty$ for $x < 0$. 
In the context of MDPs and SOCs, 
$\cR^\pi$ denotes the worst-case expectation when the adversary can change the distribution of the state-action data process within a budget specified by the $\phi$-divergence ball defined in \eqref{app_risk_phi}. 
Our ensuing discussion would be to obtain the concrete sample complexity bounds for risk \eqref{app_risk_phi_dual} with different choices of $\phi$-divergences. 

To this end, one needs to verify Assumption \ref{assump_basic} for the risk of interest \eqref{app_risk_phi_dual}.
Nevertheless, it is worth mentioning that in its original form, $\cR_{\Theta, f}$ does not meet the conditions set out by  Assumption \ref{assump_basic}.
First, it is clear that the set $\Theta = \cbr{(\lambda, \mu): \lambda \geq 0, \mu \in \RR}$ is not bounded. 
In addition, the function $f_{\theta} (z) \coloneqq f_{(\lambda, \mu)}(z)$ is typically not Lipschitz in either $\lambda$ or $\mu$. 
Indeed, one can readily see that the Lipschitz property in Assumption \ref{assump_basic} holds only if the corresponding divergence function $\phi$ has a bounded domain. 
To the best of our knowledge, the only commonly used $\phi$-divergence risk that satisfies this property is the conditional value-at-risk.
We illustrate this issue in the concrete context of Kullback-Leibler divergence. 

\begin{example}[Kullback-Leibler divergence]\label{example_kl}
For the Kullback-Leibler divergence, we have $\phi(x) = x\log x - x +1$.
Consequently, $f_{(\lambda, \mu)}$ is given by  \cite[Example 3.6]{shapiro2017distributionally}
\begin{align*}
   f_{(\lambda, \mu)} (z) = \lambda \tau + \mu + \lambda \exp \sbr{(z-\mu)/\lambda} - \lambda,
\end{align*}
which is clearly non-Lipschitz when $\lambda$ is close to zero. 
\end{example}

In view of Example \ref{example_kl}, we now proceed to introduce and utilize some techniques  developed in \cite{li2026sample} and  constrain $(\lambda, \mu)$ in \eqref{app_risk_phi_dual} to a restricted space $\Theta_{\mathrm{tr}}$. 
As a direct consequence, we will show that all the conditions in Assumption \ref{assump_basic} are satisfied over $\Theta_{\mathrm{tr}}$.
In addition, we provide an explicit characterization for the price of this restriction in terms of the estimation accuracy for $\cR_{\Theta, f}$. 
We start with the following basic observation. 

\begin{lemma}[\cite{li2026sample}]\label{lemma_bd_domain_phi}
For any random variable $X \in [-B, B]$, we have 
\begin{align*}
\cR_{\Theta, f} (X) =  \inf
    \cbr{ f_{\lambda, \mu}(X) \coloneqq  \lambda \tau + \mu + 
    \EE_P \sbr{ (\lambda \phi)^* (X - \mu) }: ~ \lambda \in [0, 2B/\tau], \mu \in [-B, B] }.
\end{align*}
\end{lemma}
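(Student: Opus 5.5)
The plan is a direct comparison argument. I show that every pair $(\lambda,\mu)$ outside the box $[0,2B/\tau]\times[-B,B]$ can be replaced by a pair inside the box without increasing $f_{\lambda,\mu}(X)$, or else has a value strictly above the optimum. Write $\psi_\lambda \coloneqq (\lambda\phi)^*$, so $\psi_\lambda(y)=\sup_{x\ge 0}\{xy-\lambda\phi(x)\}$, using $\phi=+\infty$ on $(-\infty,0)$. For $\lambda=0$ this reads $\sup_{x\in\mathrm{dom}\,\phi}xy$. The argument uses only $\phi\ge 0$, $\phi(1)=0$ and $\mathrm{dom}\,\phi\subseteq[0,\infty)$, and it works directly with the suprema so that $\lambda=0$ and extended values need no separate treatment. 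The inequality "$\ge$" in the statement is trivial, since the box is contained in $\Theta$. So it suffices to show that the infimum over the box is at most $\cR_{\Theta,f}(X)$.

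\emph{Step 1: reduction in $\mu$.} Two localization facts come first. If $y\le 0$, then any $x>1$ gives $xy-\lambda\phi(x)\le y=1\cdot y-\lambda\phi(1)$, so the supremum defining $\psi_\lambda(y)$ can be restricted to $x\in[0,1]$. Symmetrically, if $y\ge 0$ it can be restricted to $x\ge 1$.

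Now suppose $\mu>B$ and set $d=\mu-B>0$ and $y=X-\mu$. Then $y+d=X-B\le 0$. For $x\in[0,1]$ we have $x(y+d)-\lambda\phi(x)\le xy-\lambda\phi(x)+d$, so $\psi_\lambda(X-B)\le\psi_\lambda(X-\mu)+d$. Hence $f_{\lambda,B}(X)\le f_{\lambda,\mu}(X)$.

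If instead $\mu<-B$, set $d=-B-\mu$. Then $X+B\ge 0$, and for $x\ge 1$ we have $x(X+B)=x(X-\mu)-xd\le x(X-\mu)-d$. This gives $\psi_\lambda(X+B)\le\psi_\lambda(X-\mu)-d$, hence $f_{\lambda,-B}(X)\le f_{\lambda,\mu}(X)$. So $\mu$ can always be clipped to $[-B,B]$.

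\emph{Step 2: reduction in $\lambda$.} Taking $x=1$ gives $\psi_\lambda(y)\ge y$, so
\begin{align*}
f_{\lambda,\mu}(X)\ \ge\ \lambda\tau+\mu+\EE_P[X-\mu]\ =\ \lambda\tau+\EE_P[X]\ \ge\ \lambda\tau-B .
\end{align*}
For the optimum, I invoke the duality between \eqref{app_risk_phi} and \eqref{app_risk_phi_dual} recalled in Example \ref{example_phi_divergence_dro}. For any feasible density $\zeta$ we have $\int X\zeta\,dP\le B$, so $\cR_{\Theta,f}(X)\le B$.

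Consequently, whenever $\lambda>2B/\tau$ the value satisfies $f_{\lambda,\mu}(X)>B\ge\cR_{\Theta,f}(X)$. Such pairs can therefore be discarded without changing the infimum.

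\emph{Combining.} Take a minimizing sequence $(\lambda_k,\mu_k)$ for $\cR_{\Theta,f}(X)$. Clip each $\mu_k$ into $[-B,B]$ using Step 1; this does not increase the values, so the sequence remains minimizing. If the infimum were approached only along $\lambda_k>2B/\tau$, Step 2 would give $\liminf_k f_{\lambda_k,\mu_k}(X)\ge 2B-B=B$ strictly above $B$ by a margin (as $\lambda_k\tau-B>B$). This contradicts the sequence being minimizing with limit at most $B$. Hence eventually $\lambda_k\le 2B/\tau$.

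The only delicate point is the boundary case in which the infimum equals $B$ exactly. There the strict inequality $\lambda\tau-B>B$ still excludes $\lambda>2B/\tau$, so the infimum over the box is at most $\cR_{\Theta,f}(X)$, which completes the argument. The step I expect to need the most care is Step 1: it must hold uniformly for $\lambda=0$ and for possibly infinite conjugate values. I handle this by localizing the supremum to $x\le 1$ or $x\ge 1$ rather than differentiating $\psi_\lambda$.
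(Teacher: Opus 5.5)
The paper does not prove this lemma; it imports it from \cite{li2026sample}, so I am judging your argument on its own. Your Step~1 is correct: clipping $\mu$ by localizing the supremum to $x\le 1$ or $x\ge 1$ works, and it handles $\lambda=0$ and infinite conjugate values cleanly. The lower bound $f_{\lambda,\mu}(X)\ge\lambda\tau-B$ is also correct.

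The gap is in the combining step. You claim that values along a sequence with $\lambda_k>2B/\tau$ stay ``strictly above $B$ by a margin.'' That is false. If $\lambda_k\downarrow 2B/\tau$, the bound $\lambda_k\tau-B$ decreases to $B$, so you only get $\liminf_k f_{\lambda_k,\mu_k}(X)\ge B$. This contradicts nothing when $\cR_{\Theta,f}(X)=B$.

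Your ``delicate point'' paragraph does not repair this. The pointwise strict inequality shows that no point with $\lambda>2B/\tau$ has value $\le B$. What you actually need in this case is a point \emph{of the box} with value $\le B$, and nothing in your argument produces one. When $\cR_{\Theta,f}(X)<B$ your argument is fine, since eventually $f_{\lambda_k,\mu_k}(X)<B$ forces $\lambda_k<2B/\tau$.

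The missing piece is an explicit witness in the box. It also lets you drop the appeal to strong duality for $\cR_{\Theta,f}(X)\le B$. Weak duality only gives primal value $\le$ dual value, which is the wrong direction, so you were really relying on the equality asserted in Example~\ref{example_phi_divergence_dro}.

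The witness works as follows.
\begin{itemize}
\item For $y\le 0$ and $x\ge 0$ we have $xy-\lambda\phi(x)\le 0$, hence $(\lambda\phi)^*(y)\le 0$.
\item Since $X-B\le 0$, this gives $f_{0,B}(X)\le B$, and $(0,B)$ lies in the box.
\item Every $(\lambda,\mu)$ with $\lambda>2B/\tau$ then satisfies $f_{\lambda,\mu}(X)\ge\lambda\tau-B>B\ge f_{0,B}(X)$.
\item Every $(\lambda,\mu)$ with $\lambda\le 2B/\tau$ is dominated by its $\mu$-clipped version from Step~1.
\end{itemize}
So every point of $\Theta$ is dominated pointwise by a point of the box. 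This gives that the infimum over the box is at most $\cR_{\Theta,f}(X)$ directly, with no minimizing sequences needed.

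Alternatively, you could close the boundary case with lower semicontinuity of $(\lambda,\mu)\mapsto f_{\lambda,\mu}(X)$ plus compactness of $\mu_k\in[-B,B]$. The witness $(0,B)$ is simpler.
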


 Lemma \ref{lemma_bd_domain_phi} suggests that one can take $\Theta = \cbr{(\lambda, \mu): \lambda \in [0, 2B/\tau], \mu \in [-B, B]}$ in $\cR_{\Theta, f}$ defined in \eqref{app_risk_phi_dual} without loss of generality.
 We now turn to address the potential non-Lipschitz property associated with $f_{\theta}$. 
 To this end, we will dedicate our focus on divergence functions $\phi$ that have superlinear growth, i.e., $\lim_{x \to \infty} \phi(x) / x = \infty$.
 We note that such a restriction is necessary. 
 Indeed,  if the superlinear growth does not hold, then \cite{li2026sample} has shown that 
even single-stage distributionally robust stochastic programming problems with $\phi$-divergence become statistically intractable. 
This corresponds to our setting with $T = 1$.

\begin{lemma}\label{lemma_lipschitz_wo_truncation}
Suppose the divergence function $\phi$ satisfies $\lim_{x \to \infty} \phi(x) / x = \infty$. 
Define its growth function $g_\phi(x) = \inf_{x' \geq x} \frac{\phi(x')}{x'}$ for $x \geq 1$,  and its inverse 
\begin{align*}
g_{\phi}^{-1} (y) = \inf \cbr{x: x \geq 1, ~ g_\phi(x) \geq y}.
\end{align*}
For any $\underline{\lambda} > 0$,
define $L_{(\phi, B, \underline{\lambda})} = g_\phi^{-1} ({ \frac{4B}{ \underline{\lambda}}})$. 
We have that 
$
f_{(\lambda, \mu)}(z) 
$
is Lipschitz continuous with respect to $(\lambda, \mu)$  over domain 
$
\Theta_{\mathrm{tr}} \coloneqq \cbr{(\lambda, \mu): \lambda \in [ \underline{\lambda}, 2B/\tau], ~ \mu \in [-B, B]}
$
with  modulus 
\begin{align*}
L_\Theta = L_{(\phi, B, \underline{\lambda})} + 1 + \tfrac{2(L_{(\phi, B, \underline{\lambda})} + 1) B}{\underline{\lambda}} + \tau,
\end{align*}
and $
f_{(\lambda, \mu)}(z) 
$
is Lipschitz continuous  with respect to $z$
with modulus 
$
 L_{(\phi, B, \underline{\lambda})} .
$
In addition, we have 
\begin{align}\label{approx_quality_lambda}
\cR_{\Theta , f}(X) \leq \inf_{(\lambda, \mu) \in \Theta_{\mathrm{tr}} } \EE_P \sbr{f_{(\lambda, \mu)}(X)} \leq \cR_{\Theta , f}(X) + \underline{\lambda} \tau.
\end{align}
\end{lemma}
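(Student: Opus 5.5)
The plan rests on the structure of the conjugate $(\lambda\phi)^*(y) = \sup_{\zeta \geq 0}\{\zeta y - \lambda\phi(\zeta)\}$ and on localizing its maximizer. For $z \in [-B,B]$, $\mu \in [-B,B]$ and $\lambda \geq \underline{\lambda}$, the argument $y = z-\mu$ lies in $[-2B, 2B]$. The first step is to show that the supremum may be restricted to $\zeta \in [0, L_{(\phi,B,\underline{\lambda})}]$.

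For $\zeta > L := g_\phi^{-1}(4B/\underline{\lambda})$ we have $\phi(\zeta)/\zeta \geq g_\phi(\zeta) \geq 4B/\underline{\lambda}$ (using monotonicity of $g_\phi$, and right-continuity or a limiting argument at the infimum). Hence
\begin{align*}
\zeta y - \lambda \phi(\zeta) \leq \zeta(2B - 4B) < 0 \leq y - \lambda\phi(1) = y,
\end{align*}
at least when $y \geq 0$. For $y<0$ I would instead compare against $\zeta = 0$, which gives value $-\lambda\phi(0)$, or against $\zeta=1$, which gives value $y$. The cleaner route is to note that $\zeta y - \lambda\phi(\zeta) \leq -2B\zeta$ for $\zeta > L$, which is below $y \geq -2B$ once $\zeta>1$. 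So in all cases the maximizer, or near-maximizers, lies in $[0,L]$.

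It follows that $(\lambda\phi)^*(y) = \max_{\zeta\in[0,L]}\{\zeta y - \lambda\phi(\zeta)\}$ on the relevant range. This is a maximum of functions that are $L$-Lipschitz in $y$, hence in $z$ and in $\mu$. That gives the $z$-modulus $L$, and the $\mu$-modulus $1+L$ once the linear term $\mu$ is included.

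For the $\lambda$-direction I would use Danskin's theorem (or the envelope argument). The derivative in $\lambda$ is $\tau - \phi(\zeta^*)$ with $\zeta^* \in [0,L]$ a maximizer. I would bound $\phi(\zeta^*)$ by optimality against $\zeta=1$: $\lambda\phi(\zeta^*) \leq \zeta^* y - y \leq (L+1)2B$, so $\phi(\zeta^*) \leq 2(L+1)B/\underline{\lambda}$. Summing the partial moduli gives $L_\Theta$ as stated (with the Euclidean norm bounded by the sum of the coordinate moduli). The main obstacle is doing this cleanly at boundary points where $\phi(0)$ may be large; I would handle it by working with the restricted-max representation, which is a pointwise max of functions affine in $\lambda$ with slope in $[-2(L+1)B/\underline{\lambda}, 0]$ once the near-maximizer bound is imposed uniformly.

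Finally, for \eqref{approx_quality_lambda}, the left inequality is immediate from $\Theta_{\mathrm{tr}} \subseteq \Theta$ together with Lemma \ref{lemma_bd_domain_phi}. For the right inequality, take a near-optimal $(\lambda,\mu)$ from Lemma \ref{lemma_bd_domain_phi}. If $\lambda < \underline{\lambda}$, replace it by $\underline{\lambda}$. Since $(\lambda\phi)^*(y)$ is nonincreasing in $\lambda$ (because $\phi\geq 0$), only the term $\lambda\tau$ increases, and it increases by at most $\underline{\lambda}\tau$. This yields the stated bound.
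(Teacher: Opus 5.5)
Your proposal is correct, but it takes a different route from the paper. The paper gives no derivation: it invokes Lemmas 3.5--3.7 of \cite{li2026sample}. You rebuild the result from first principles, and each step holds.

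\begin{itemize}
\item \emph{Localizing the maximizer.} You work with $(\lambda\phi)^*(y)=\sup_{\zeta\ge 0}\{\zeta y-\lambda\phi(\zeta)\}$. For $\zeta>L=g_\phi^{-1}(4B/\underline{\lambda})$, monotonicity of $g_\phi$ alone gives $\lambda\phi(\zeta)\ge 4B\zeta$; no right-continuity is needed, because the inequality $\zeta>L$ is strict. The objective is then at most $-2B\zeta<-2B\le y$, which is its value at $\zeta=1$. So for $|z-\mu|\le 2B$ the supremum is attained on $[0,L]$. This gives the $z$-modulus $L$ and the $\mu$-modulus $L+1$.
\item \emph{The $\lambda$-modulus.} Comparing a maximizer with $\zeta=1$ gives $\phi(\zeta^*)\le 2(L+1)B/\underline{\lambda}$. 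Your fallback is the right way to use this, and it makes Danskin's theorem unnecessary. The set $Z=\{\zeta\in[0,L]:\phi(\zeta)\le 2(L+1)B/\underline{\lambda}\}$ does not depend on $\lambda$. It contains a maximizer for every $\lambda\in[\underline{\lambda},2B/\tau]$, because the objective is upper semicontinuous on the compact interval $[0,L]$. Hence $(\lambda\phi)^*(y)$ is a maximum of affine functions of $\lambda$ with slopes in $[-2(L+1)B/\underline{\lambda},0]$. Summing the coordinate moduli then reproduces $L_\Theta$ exactly.
\item \emph{The truncation bound.} Your argument via monotonicity of $(\lambda\phi)^*$ in $\lambda$ is also correct. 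At $\lambda=0$ it relies on the standard convention $(0\phi)^*(y)=\sup_{\zeta\in\mathrm{dom}\,\phi}\zeta y$.
\end{itemize}

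What your route buys is transparency. It shows where each constant comes from: the threshold $4B/\underline{\lambda}$ is exactly what makes the tail $\zeta>L$ irrelevant when $|z-\mu|\le 2B$. It also exposes assumptions the statement leaves implicit: $z\in[-B,B]$, and $\underline{\lambda}\le 2B/\tau$ so that $\Theta_{\mathrm{tr}}$ is nonempty. The paper's citation buys brevity by deferring these details to \cite{li2026sample}.
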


\begin{proof}
The desired claim follows from a direct application of Lemma 3.5, 3.6, and 3.7 in \cite{li2026sample}.
\end{proof}

In view of Lemma \ref{lemma_lipschitz_wo_truncation}, by restricting $(\lambda, \mu)$ to the truncated domain $\Theta_{\mathrm{tr}}$, the corresponding 
$f_{(\lambda, \mu)}$ and $\Theta_{\mathrm{tr}}$ satisfy Assumption \ref{assump_basic}.
The resulting risk functional $\cR_{\Theta_{\mathrm{tr}} , f}$, given \eqref{approx_quality_lambda}, 
closely tracks that of the true risk $\cR_{\Theta, f}$, provided the restriction $\underline{\lambda}$ in $\Theta_{\mathrm{tr}}$ is small enough. 
We are now ready to establish sample complexity results for risk-averse MDP and SOC problems with general $\phi$-divergences.

\subsection{Risk-averse MDPs with $\phi$-divergences}\label{subsec_application_mdp}

Using Lemma \ref{lemma_lipschitz_wo_truncation}, we first establish the general sample complexity for policy evaluation and optimization in risk-averse MDPs. 

\begin{theorem} \label{thrm_samp_comp_mdp_phi}
    Suppose $\pi$ satisfies \eqref{eq_finite_mdp_policy_lip}. For any $\epsilon > 0$ and $\delta \in (0,1)$, take
 \begin{align}\label{eval_mdp_application}
 n = \cO \left(\tfrac{T^2 g_\phi^{-1}(16T\tau/\epsilon)^2}{\epsilon^2} \log \left(\tfrac{T |\cS| L_\pi \tau g_\phi^{-1}(16T \tau /\epsilon)}{\delta \epsilon} \right)\right).
 \end{align}
 Then with probability at least $1-\delta$, we have $|\hat{\cR}_n^\pi(s) - \cR^{\pi}(s)| \leq \epsilon$ for all $s \in \cS_0.$
    In addition, take
   \begin{align}\label{opt_mdp_application}
   n = \cO \left(\tfrac{T^3 g_\phi^{-1}(16T \tau /\epsilon)^2}{\epsilon^2} \log \left(\tfrac{T |\cS| |\cA| \tau g_\phi^{-1}(16T \tau /\epsilon)}{\delta \epsilon} \right)\right).
   \end{align}
    Then with probability at least $1-\delta$, we have  $| \hat{\cR}^*_n(s)  -\cR^*(s) | \leq \epsilon $ for all $ s \in \cS_0$.
\end{theorem}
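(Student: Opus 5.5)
The plan is to reduce to Theorems \ref{thrm_sample_fin_mdpite_eval} and \ref{thrm_sample_fin_mdpite_opt}, applied to the truncated functional $\cR_{\Theta_{\mathrm{tr}}, f}$, and to absorb the truncation bias through \eqref{approx_quality_lambda}.

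\textbf{Step 1: boundedness.} The total cost $X=\sum_{t=0}^T c_t(S_t,A_t)$ lies in $[0,T+1]$, so Lemma \ref{lemma_bd_domain_phi} applies with $B = T+1$. This holds under both $\PP$ and $\hat{\PP}_n$, since the bound on $X$ does not depend on the transition kernel. Hence we may take $\Theta = \{(\lambda,\mu): \lambda\in[0,2B/\tau],\ \mu\in[-B,B]\}$ without loss of generality.

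\textbf{Step 2: truncation.} Choose $\underline{\lambda} = \epsilon/(4\tau)$. By \eqref{approx_quality_lambda}, the truncated risks $\cR^\pi_{\mathrm{tr}}$ and $\hat{\cR}^\pi_{n,\mathrm{tr}}$ (defined with $\Theta_{\mathrm{tr}}$) each exceed their untruncated counterparts by at most $\epsilon/4$. Taking minima over $\pi$ on both sides, the same holds for the optimal risks $\cR^*$ and $\hat{\cR}^*_n$.

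\textbf{Step 3: verify Assumption \ref{assump_basic}.} Lemma \ref{lemma_lipschitz_wo_truncation} gives the constants over $\Theta_{\mathrm{tr}}$:
\[
L_C = g_\phi^{-1}\bigl(4B/\underline{\lambda}\bigr) = g_\phi^{-1}\bigl(16(T+1)\tau/\epsilon\bigr),
\]
which is $g_\phi^{-1}(16T\tau/\epsilon)$ up to the constant absorbed in $\cO$. Moreover $L_\Theta = \cO(L_C T\tau/\epsilon)$, $R_\Theta = \cO(T/\tau + T)$, and $d_\Theta = 2$.

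\textbf{Step 4: apply the general theorems at accuracy $\epsilon/2$.}
\begin{itemize}
\item For evaluation, Theorem \ref{thrm_sample_fin_mdpite_eval} gives $|\hat{\cR}^\pi_{n,\mathrm{tr}} - \cR^\pi_{\mathrm{tr}}| \le \epsilon/2$. The triangle inequality with Step 2 then yields $|\hat{\cR}_n^\pi - \cR^\pi| \le \epsilon$.
\item For optimization, Theorem \ref{thrm_sample_fin_mdpite_opt} gives the analogous bound for $\hat{\cR}^*_n$.
\end{itemize}

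\textbf{Step 5: simplify the logarithms.} Substituting the constants from Step 3, the prefactors become $T^2L_C^2/\epsilon^2$ for evaluation and $T^3L_C^2/\epsilon^2$ for optimization, matching \eqref{eval_mdp_application} and \eqref{opt_mdp_application}. Inside the logarithm, the factor $(L_\Theta R_\Theta/\epsilon)^{d_\Theta}$ with $d_\Theta=2$ is polynomial in $T, \tau, 1/\tau, L_C, 1/\epsilon$. The factor $\log(T^2|\cA|L_C/\epsilon)$ in Theorem \ref{thrm_sample_fin_mdpite_opt} is likewise polynomially bounded. Polynomial powers inside a logarithm only change constants, so everything collapses to $\log(T|\cS|L_\pi\tau L_C/(\delta\epsilon))$ and $\log(T|\cS||\cA|\tau L_C/(\delta\epsilon))$ respectively, up to constants in $\cO$.

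\textbf{Main obstacle.} The delicate point is the bookkeeping in Step 2 and Step 5. First, the truncation must be applied consistently to both the true and the empirical risk, and must commute with the minimization over $\pi$; this works because \eqref{approx_quality_lambda} holds for every random variable in $[-B,B]$ and is therefore uniform in the distribution. Second, the $1/\tau$ and $\tau$ factors appearing in $R_\Theta$ and $L_\Theta$ must be absorbed into the logarithm. I would handle this by crudely bounding all such factors by powers of $T\tau(1+1/\tau)L_C/\epsilon$.
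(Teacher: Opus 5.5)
Your proposal follows the paper's proof. You truncate $(\lambda,\mu)$ to $\Theta_{\mathrm{tr}}$ with $\underline{\lambda}\propto\epsilon/\tau$, charge the bias from \eqref{approx_quality_lambda} to both the true and the empirical risk (uniformly in $\pi$, hence also to $\cR^*$ and $\hat{\cR}^*_n$), and invoke Theorems \ref{thrm_sample_fin_mdpite_eval} and \ref{thrm_sample_fin_mdpite_opt} at accuracy $\epsilon/2$ with the constants of Lemma \ref{lemma_lipschitz_wo_truncation}.

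One step fails as written, though. In Step 3 you claim $g_\phi^{-1}(16(T+1)\tau/\epsilon)$ equals $g_\phi^{-1}(16T\tau/\epsilon)$ up to an absolute constant. That is false when $g_\phi^{-1}$ grows fast. For KL, $g_\phi^{-1}(y)\approx e^{1+y}$, so the ratio is about $e^{16\tau/\epsilon}$, and your $n$ would exceed the stated one by a factor of roughly $e^{32\tau/\epsilon}$, which is not $\cO(1)$.

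The repair is cheap. The two truncation biases have the same sign: $0\le\cR^\pi_{\mathrm{tr}}-\cR^\pi\le\underline{\lambda}\tau$ and $0\le\hat{\cR}^\pi_{n,\mathrm{tr}}-\hat{\cR}^\pi_n\le\underline{\lambda}\tau$. Hence $|\hat{\cR}^\pi_n-\cR^\pi|\le|\hat{\cR}^\pi_{n,\mathrm{tr}}-\cR^\pi_{\mathrm{tr}}|+\underline{\lambda}\tau$, not $+2\underline{\lambda}\tau$. So $\underline{\lambda}=\epsilon/(2\tau)$ suffices with $B=T+1$. This gives $4B/\underline{\lambda}=8(T+1)\tau/\epsilon\le 16T\tau/\epsilon$ for $T\ge 1$, and monotonicity of $g_\phi^{-1}$ yields exactly the stated argument. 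Alternatively, recenter $X$ at $(T+1)/2$ using translation equivariance of the functional, so that $B=(T+1)/2$.

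The paper's proof avoids this only by writing $L_\phi=g_\phi^{-1}(4T/\underline{\lambda})$, i.e.\ implicitly $B=T$, although the total cost can reach $T+1$. So your choice $B=T+1$ is the correct one; it just needs the sharper bias accounting.

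A smaller point in Step 5: $\tau(1+1/\tau)=1+\tau$ does not dominate the $1/\tau$ coming from $R_\Theta\approx 2B/\tau$, multiplied by the $\cO(L_C)$ part of $L_\Theta$. So the honest logarithm contains $\tau+1/\tau$. This affects only logarithmic factors and is the same looseness already present in the stated bound.
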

\begin{proof}
    Let $\underline{\lambda} > 0$ and $L_\phi = g_\phi^{-1}(\tfrac{4T}{\underline{\lambda}})$. 
    In addition,  define  $\cR^\pi  \coloneqq \cR_{\Theta, f} (\cdot)$ and $\cR^{\pi, \mathrm{tr}}  \coloneqq \cR_{\Theta_{\mathrm{tr}}, f} (\cdot)$.
Then from Lemma \ref{lemma_lipschitz_wo_truncation}, we obtain
    \begin{align*}
        \left|\hat{\cR}_n^\pi(s) - \cR^{\pi}(s)\right| \leq 2\underline{\lambda} \tau +  \left|\hat{{\cR}}_n^{\pi, \mathrm{tr}}(s) - {\cR}^{\pi, \mathrm{tr}}(s)\right|.
    \end{align*}
   Then \eqref{eval_mdp_application} follows by choosing $\underline{\lambda} = \tfrac{\epsilon}{4 \tau}$ in Lemma \ref{lemma_lipschitz_wo_truncation}, followed by  Theorem \ref{thrm_sample_fin_mdpite_eval}    applied 
   to $\cR^{\pi, \mathrm{tr}}$.  
    %
    In addition, \eqref{opt_mdp_application} follows from the same argument, except that we use  Theorem \ref{thrm_sample_fin_mdpite_opt} in place of Theorem \ref{thrm_sample_fin_mdpite_eval}.
\end{proof}

Clearly, Theorem \ref{thrm_samp_comp_mdp_phi} suggests that the sample complexity of risk-averse MDPs with a $\phi$-divergence-based risk functional depends on the growth of  $\phi$. 
It is worth noting here that such a dependence is indeed necessary and cannot be improved in general.
Indeed, for $T=1$ our considered MDP can be viewed as a single-stage distributionally robust optimization, and \cite{li2026sample} has recently established an information-theoretic lower bound that depends on the growth function of $\phi$. 

\begin{corollary}[Conditional value-at-risk] \label{cor_samp_comp_mdp_cvar}
    Suppose $\pi$ satisfies \eqref{eq_finite_mdp_policy_lip}. For any $\epsilon > 0$ and $\delta \in (0,1)$, take
    \begin{align*}
    n = \cO \left(\tfrac{T^2}{\alpha^2\epsilon^2} \log \left(\tfrac{T |\cS| L_\pi}{\delta \alpha \epsilon} \right)\right), 
    \end{align*}
 Then with probability at least $1-\delta$, we have $|\hat{\cR}_n^\pi(s) - \cR^{\pi}(s)| \leq \epsilon$ for all $s \in \cS_0.$
    In addition, take
    \begin{align*}
    n = \cO \left(\tfrac{T^3}{\alpha^2 \epsilon^2} \log \left(\tfrac{T |\cS| |\cA|}{\delta \alpha \epsilon} \right)\right).
    \end{align*}
    Then with probability at least $1-\delta$, we have  $| \hat{\cR}^*_n(s)  -\cR^*(s) | \leq \epsilon $ for all $ s \in \cS_0$.
\end{corollary}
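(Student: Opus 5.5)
The plan is to instantiate Theorem \ref{thrm_samp_comp_mdp_phi}, or more directly Theorems \ref{thrm_sample_fin_mdpite_eval} and \ref{thrm_sample_fin_mdpite_opt}, with the CVaR divergence. Conditional value-at-risk at level $\alpha$ arises from the divergence $\phi(x) = \delta_{[0,1/\alpha]}(x)$, which has bounded domain. Equivalently, by Example \ref{example_cvar}, it arises from the one-parameter family $f_\theta(z) = \theta + \frac{1}{\alpha}[z-\theta]_+$. Because the domain is bounded, no truncation of $\lambda$ is needed and we avoid the growth function $g_\phi$ altogether. (Formally $g_\phi^{-1}$ would be capped at $1/\alpha$, since $\phi(x)/x=\infty$ for $x>1/\alpha$, so $g_\phi^{-1}(\cdot)\le 1/\alpha$.) I would therefore work with the representation of Example \ref{example_cvar} and verify Assumption \ref{assump_basic} directly.

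\textbf{Step 1: verify Assumption \ref{assump_basic}.} The total cost satisfies $X = \sum_{t=0}^T c_t \in [0,T+1]$. Arguing as in Lemma \ref{lemma_bd_domain_phi}, the minimizer $\theta$ is a quantile of $X$, so we may restrict $\Theta = [0, T+1]$ without loss of generality. This gives $R_\Theta = \cO(T)$ and $d_\Theta = 1$. Next, $|f_\theta(z)-f_\theta(z')| \le \frac{1}{\alpha}|z-z'|$, so $L_C = 1/\alpha$. Also $|f_\theta(z)-f_{\theta'}(z)| \le (1+\frac1\alpha)|\theta-\theta'|$, so $L_\Theta = \cO(1/\alpha)$.

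\textbf{Step 2: substitute into the theorems.} For policy evaluation, Theorem \ref{thrm_sample_fin_mdpite_eval} with these constants gives
\[
n=\cO\!\left(\tfrac{T^2}{\alpha^2\epsilon^2}\log\!\left(\tfrac{T^4|\cS| L_\pi}{\alpha\delta\epsilon}\cdot\tfrac{T}{\alpha\epsilon}\right)\right).
\]
Absorbing polynomial factors in $T$, $1/\alpha$ and $1/\epsilon$ inside the logarithm into constants yields the stated bound. For policy optimization, Theorem \ref{thrm_sample_fin_mdpite_opt} gives the $T^3/(\alpha^2\epsilon^2)$ rate. Its logarithmic term contains $T^7|\cS||\cA|\alpha^{-3}\epsilon^{-3}\log(T^2|\cA|/(\alpha\epsilon))\cdot T/(\alpha\epsilon)$, which simplifies to $\cO(\log(T|\cS||\cA|/(\delta\alpha\epsilon)))$ up to constants.

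The only mildly delicate point is Step 1's restriction of $\Theta$ to a bounded interval without changing $\cR^\pi$ or $\hat\cR^\pi_n$. This holds because the optimal $\theta$ for either the true or the empirical law lies in the support of $X$, which is contained in $[0,T+1]$. Everything else is bookkeeping of logarithmic factors.
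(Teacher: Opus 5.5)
Your proposal is correct and matches the paper's proof. Both use the variational representation of Example~\ref{example_cvar}, restrict $\theta$ to an interval of length $\cO(T)$, take $L_C = 1/\alpha$, $L_\Theta = \cO(1/\alpha)$ and $d_\Theta = 1$, then substitute into Theorems~\ref{thrm_sample_fin_mdpite_eval} and~\ref{thrm_sample_fin_mdpite_opt} and absorb the polynomial factors inside the logarithm. The small differences do not affect the stated bounds: your $\Theta=[0,T+1]$ is the correct range for a total cost over $T+1$ stages, versus the paper's $[0,T]$, and your $L_\Theta = 1+1/\alpha$ is looser than the exact $\max\{1, 1/\alpha-1\}\le 1/\alpha$.
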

\begin{proof}
    The proof follows directly from Theorem \ref{thrm_sample_fin_mdpite_eval}, \ref{thrm_sample_fin_mdpite_opt}, and \ref{thrm_samp_comp_mdp_phi}, by noting that for conditional value-at-risk, we have \cite{shapiro2017distributionally}
    \begin{align*}
    \cR^\pi \rbr{ s} = \min_{\mu} \cbr{ \mu + \frac{1}{\alpha} \EE^\pi_{\PP} [{ \tsum_{t=0}^T c(S_t, A_t) - \mu}]_+ }.
    \end{align*}
   Consequently, one can take  
    $L_C = L_\Theta = 1/\alpha$ and $\Theta = [0,T]$.
\end{proof}

It should be noted that conditional value-at-risk has received considerable attention in the recent development of risk-averse MDPs \cite{bastani2022regret,wang2023near,ni2024risk}. 
Below, we proceed to instantiate Theorem \ref{thrm_samp_comp_mdp_phi} and obtain some new sample complexity results for commonly used $\phi$-divergences, 
including KL-divergence, $\chi^2$-divergence, and more generally 
the Cressie-Read class of divergences.

\begin{corollary}[Kullback-Leibler divergence] \label{cor_samp_comp_mdp_kl}
Let $\phi(x) = x\log x - x + 1$ in \eqref{app_risk_phi_dual}.
    Suppose $\pi$ satisfies \eqref{eq_finite_mdp_policy_lip}. For any $\epsilon > 0$ and $\delta \in (0,1)$, take
    \begin{align*}
    n = \cO \left(\tfrac{T^3 \tau \exp(T\tau/\epsilon)^2}{\epsilon^3} \log \left(\tfrac{T |\cS| L_\pi \tau}{\delta \epsilon} \right)\right).
    \end{align*}
 Then with probability at least $1-\delta$, we have $|\hat{\cR}_n^\pi(s) - \cR^{\pi}(s)| \leq \epsilon$ for all $s \in \cS_0.$
    In addition, take
    \begin{align*}
    n = \cO \left(\tfrac{T^4 \tau \exp(T\tau/\epsilon)^2}{\epsilon^3} \log \left(\tfrac{T |\cS| |\cA| \tau}{\delta \epsilon} \right)\right), 
    \end{align*}
    Then with probability at least $1-\delta$, we have  $| \hat{\cR}^*_n(s)  -\cR^*(s) | \leq \epsilon $ for all $ s \in \cS_0$.
\end{corollary}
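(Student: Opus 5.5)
Our plan is to instantiate Theorem \ref{thrm_samp_comp_mdp_phi} by computing the growth function $g_\phi$ and its inverse for $\phi(x)=x\log x - x+1$. The remaining work is to bound $g_\phi^{-1}(16T\tau/\epsilon)$ explicitly and substitute it into \eqref{eval_mdp_application} and \eqref{opt_mdp_application}.

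First, for $x\ge 1$ we have $\phi(x)/x = \log x - 1 + 1/x$. Its derivative is $1/x - 1/x^2 \ge 0$, so this ratio is nondecreasing on $[1,\infty)$. Hence $g_\phi(x)=\log x - 1 + 1/x$, and superlinear growth clearly holds.

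Second, since $\log x - 1 + 1/x \ge \log x - 1$, we get $g_\phi(x)\ge y$ whenever $x\ge e^{y+1}$. Therefore $g_\phi^{-1}(y)\le e^{y+1}$, and
\[
g_\phi^{-1}(16T\tau/\epsilon) \le e\cdot \exp(16T\tau/\epsilon).
\]

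Substituting this into \eqref{eval_mdp_application} yields a leading factor $T^2\exp(16T\tau/\epsilon)^2/\epsilon^2$. The logarithmic factor becomes $\log(T|\cS|L_\pi\tau/(\delta\epsilon)) + \cO(T\tau/\epsilon)$.

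The main obstacle is reconciling constants with the stated form $T^3\tau\exp(T\tau/\epsilon)^2/\epsilon^3$. The extra factor $T\tau/\epsilon$ arises exactly from the additive $\log g_\phi^{-1} = \cO(T\tau/\epsilon)$ term inside the logarithm. Multiplying the leading factor by this term gives $T^3\tau\exp(\cdot)^2/\epsilon^3$ times the residual $\log(T|\cS|L_\pi\tau/(\delta\epsilon))$, so that is how we will read the displayed bound.

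The constant $16$ in the exponent must be absorbed inside $\cO(\cdot)$ in the exponent, which is a slight abuse. To handle it cleanly, we would either interpret $\exp(T\tau/\epsilon)$ up to absolute constants in the exponent, or re-run the proof of Theorem \ref{thrm_samp_comp_mdp_phi} with sharper bookkeeping of $\underline\lambda$. In that proof, $\underline\lambda=\epsilon/(4\tau)$ with $B$ of order $T$ gives argument $4B/\underline\lambda$ of order $T\tau/\epsilon$. We would note that the constant appears in the exponent and state the bound accordingly.

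The optimization bound follows identically from \eqref{opt_mdp_application}, which carries an extra factor of $T$. This produces $T^4\tau\exp(T\tau/\epsilon)^2/\epsilon^3$ with logarithmic factor $\log(T|\cS||\cA|\tau/(\delta\epsilon))$.
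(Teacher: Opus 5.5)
Your proposal is correct and matches the paper's proof: compute $g_\phi(x)=\log x-1+1/x$ (your monotonicity check of $\phi(x)/x$ on $[1,\infty)$ justifies the paper's ``direct calculation''), deduce $g_\phi^{-1}(16T\tau/\epsilon)\le e^{1+16T\tau/\epsilon}$, and substitute into Theorem~\ref{thrm_samp_comp_mdp_phi}. Your account of where the extra $T\tau/\epsilon$ factor comes from, and of the constant $16$ being absorbed in the exponent, spells out what the paper leaves implicit; one caveat is that trading the additive $\cO(T\tau/\epsilon)$ inside the logarithm for a multiplicative factor tacitly assumes both $T\tau/\epsilon$ and the remaining logarithm are bounded below by absolute constants.
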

\begin{proof}
    In view of Theorem \ref{thrm_samp_comp_mdp_phi}, it only remains to control $g_\phi^{-1}(\frac{16T \tau}{\epsilon})$. A direct calculation yields
    $$g_\phi(x) = \inf_{x' \geq x} \tfrac{x' \log x' - x' + 1}{x'} = \log x - 1 + \tfrac{1}{x}.$$
    Consequently, the definition of $g_\phi^{-1}$ implies $g_\phi^{-1}(\frac{16T \tau }{\epsilon}) \leq e^{1 + 16T\tau/\epsilon}$.
\end{proof}

Corollary \ref{cor_samp_comp_mdp_kl} appears to be the first sample complexity result for static risk-averse MDPs with KL-divergence. 
In particular, it can be readily seen that the obtained sample complexity scales exponentially with respect to the horizon $T$ and target precision $\epsilon$. 
It should be noted that the dependence on $\epsilon$ is in general not improvable and hence worst-case optimal even for $T=1$ (c.f. \cite{li2026sample}). 
It remains interesting to study whether the dependence on~$T$ can be further improved.
As our last result, we establish the sample complexity for the Cressie-Read family of divergence functions indexed by $k > 1$. 

\begin{corollary}[Cressie-Read family]\label{corr_read}
    \label{cor_samp_comp_mdp_cr}
    For any $k > 1$, let $\phi(x) = \tfrac{x^k -kx + k-1}{k(k-1)}$. 
    Suppose $\pi$ satisfies \eqref{eq_finite_mdp_policy_lip}. For any $\epsilon > 0$ and $\delta \in (0,1)$, take
    \begin{align*}
    n = \cO \left(\tfrac{T^2 (k^2T \tau/\epsilon + k)^{2/(k-1)}}{\epsilon^2} \log \left(\tfrac{T |\cS| L_\pi \tau (k^2T \tau/\epsilon + k)^{1/(k-1)}}{\delta \epsilon} \right)\right). 
    \end{align*}
 Then with probability at least $1-\delta$, we have $|\hat{\cR}_n^\pi(s) - \cR^{\pi}(s)| \leq \epsilon$ for all $s \in \cS_0.$
    In addition, take
    \begin{align*}
    n = \cO \left(\tfrac{T^3 (k^2T \tau/\epsilon + k)^{2/(k-1)}}{\epsilon^2} \log \left(\tfrac{T |\cS| |\cA| \tau (k^2T \tau/\epsilon + k)^{1/(k-1)}}{\delta \epsilon} \right)\right).
    \end{align*}
    Then with probability at least $1-\delta$, we have  $| \hat{\cR}^*_n(s)  -\cR^*(s) | \leq \epsilon $ for all $ s \in \cS_0$.
\end{corollary}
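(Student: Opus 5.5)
As with Corollary \ref{cor_samp_comp_mdp_kl}, Theorem \ref{thrm_samp_comp_mdp_phi} already contains both sample-size statements. The only work left is to bound $g_\phi^{-1}(y)$ with $y = 16T\tau/\epsilon$ for the Cressie-Read divergence and substitute that bound into \eqref{eval_mdp_application} and \eqref{opt_mdp_application}. I would do this in three steps.

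First, compute the growth function. For $x \geq 1$ we have
\begin{align*}
\frac{\phi(x)}{x} = \frac{x^{k-1} - k + (k-1)/x}{k(k-1)} .
\end{align*}
Its derivative in $x$ equals $\frac{(k-1)x^{k-2} - (k-1)x^{-2}}{k(k-1)}$, which is nonnegative for $x \geq 1$. Hence $\phi(x)/x$ is nondecreasing on $[1,\infty)$, the infimum over $x' \geq x$ is attained at $x' = x$, and
\begin{align*}
g_\phi(x) = \frac{x^{k-1} - k + (k-1)/x}{k(k-1)} \geq \frac{x^{k-1} - k}{k(k-1)} .
\end{align*}
This function also tends to $\infty$, so the superlinear growth assumption of Lemma \ref{lemma_lipschitz_wo_truncation} holds.

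Second, invert this lower bound. The inequality $g_\phi(x) \geq y$ holds as soon as $x^{k-1} \geq k(k-1) y + k$. Therefore
\begin{align*}
g_\phi^{-1}(y) \leq \max\cbr{1, (k(k-1)y + k)^{1/(k-1)}} \leq (k^2 y + k)^{1/(k-1)},
\end{align*}
where the last step uses $k \geq 1$ for the max and $k(k-1) \leq k^2$. Plugging in $y = 16T\tau/\epsilon$ and absorbing the factor $16$ into the $\cO(\cdot)$ gives $g_\phi^{-1}(16T\tau/\epsilon) = \cO\bigl((k^2T\tau/\epsilon + k)^{1/(k-1)}\bigr)$.

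Third, substitute. The square of this quantity appears in the leading factor of \eqref{eval_mdp_application} and \eqref{opt_mdp_application}, giving $(k^2T\tau/\epsilon + k)^{2/(k-1)}$. The first power appears inside the logarithm, giving $(k^2T\tau/\epsilon + k)^{1/(k-1)}$. Together these yield exactly the two displayed sample sizes, and the probability and accuracy guarantees are inherited directly from Theorem \ref{thrm_samp_comp_mdp_phi}.

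None of these steps is a real obstacle. The one point that needs care is the constant inside $\cO(\cdot)$: the factor $16$ sits inside a power $1/(k-1)$, and $16^{1/(k-1)}$ is not uniformly bounded as $k \downarrow 1$. So I would treat $k$ as a fixed constant, or else bound more carefully by $(16k^2T\tau/\epsilon + k)^{1/(k-1)}$ and state that form.
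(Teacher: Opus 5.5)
Your proposal is correct and follows the paper's proof essentially step for step. You compute $g_\phi(x) = \frac{x^{k-1}-k+(k-1)/x}{k(k-1)}$ for $x \geq 1$, additionally verifying the monotonicity of $\phi(x)/x$ that the paper simply asserts, then bound $g_\phi^{-1}(16T\tau/\epsilon) \leq (16k(k-1)T\tau/\epsilon + k)^{1/(k-1)}$ and substitute into Theorem \ref{thrm_samp_comp_mdp_phi}. Your caveat that absorbing the factor $16$ costs a $k$-dependent constant $16^{1/(k-1)}$ (squared in the leading term) is accurate, and it applies equally to the paper's statement, which implicitly treats $k$ as fixed.
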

\begin{proof}
    Similarly to Corollary \ref{cor_samp_comp_mdp_kl}, it only remains to bound $g_\phi^{-1}(\frac{16T \tau}{\epsilon})$. For any $x \geq 1$, we have
    $$g_\phi(x) = \inf_{x' \geq x} \tfrac{(x')^{k-1} -k + (k-1)/x'}{k(k-1)} = \tfrac{x^{k-1} -k + (k-1)/x}{k(k-1)}.$$
    Consequently, we obtain $g_\phi^{-1}(16T\tau/\epsilon) \leq (16k(k-1)T \tau/\epsilon + k)^{1/(k-1)}.$
    The desired claim then follows from applying Theorem \ref{thrm_samp_comp_mdp_phi}.
\end{proof}

It is worth mentioning here that $\chi^2$-divergence corresponds to the special case of $k = 2$.
In addition, Corollary \ref{corr_read} seems to be the first sample complexity result for risk-averse MDPs with static risk functionals generated by the Cressie-Read class of $\phi$-divergences.

\subsection{Risk-averse SOCs with $\phi$-divergences}

We now turn our attention to specialize the obtained sample complexity results in Section \ref{sec_soc} to risk-averse stochastic optimal control, when the considered risk functional $\cR^\pi \coloneqq \cR_{\Theta, f}(\cdot)$ is given by \eqref{app_risk_phi_dual}.
Similarly to Theorem \ref{thrm_samp_comp_mdp_phi}, we first establish the sample-complexity with general divergence functions.

\begin{theorem} \label{thrm_samp_comp_soc_phi}
    Suppose $\pi$ satisfies \eqref{eq_finite_soc_policy_lip}. For any $\epsilon > 0$ and $\delta \in (0,1)$, take
    \begin{align*}
    n = \cO \left(\tfrac{T^3 g_\phi^{-1}(16T\tau/\epsilon)^2 d_\cS}{\epsilon^2} \log \left(\tfrac{T L_\pi L \tau g_\phi^{-1}(16T \tau /\epsilon)}{\delta \epsilon} \right)\right).
    \end{align*}
 Then with probability at least $1-\delta$, we have $|\hat{\cR}_n^\pi(s) - \cR^{\pi}(s)| \leq \epsilon$ for all $s \in \cS_0.$
    In addition, take
    \begin{align*}
    n = \cO \left(\tfrac{T^4 g_\phi^{-1}(16T \tau /\epsilon)^2 (d_\cS + d_\cA)}{\epsilon^2} \log \left(\tfrac{T L \tau g_\phi^{-1}(16T \tau /\epsilon)}{\delta \epsilon} \right)\right).
    \end{align*}
    Then with probability at least $1-\delta$, we have  $| \hat{\cR}^*_n(s)  -\cR^*(s) | \leq \epsilon $ for all $ s \in \cS_0$.
\end{theorem}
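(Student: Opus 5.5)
The argument mirrors the proof of Theorem \ref{thrm_samp_comp_mdp_phi}, with Theorems \ref{thrm_sample_eval_soc_finite} and \ref{thrm_sample_opt_soc_finite} taking the place of their MDP counterparts. The total cost $\tsum_{t=0}^T c_t(S_t,A_t)$ lies in $[0,T+1]$, so Lemma \ref{lemma_bd_domain_phi} applies with $B=\cO(T)$. It lets us restrict $\Theta$ to $\lambda\in[0,2B/\tau]$ and $\mu\in[-B,B]$ without loss of generality.

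Fix $\underline{\lambda}>0$ and set $L_\phi=g_\phi^{-1}(4B/\underline{\lambda})$. Lemma \ref{lemma_lipschitz_wo_truncation} then gives the following on $\Theta_{\mathrm{tr}}$:
\begin{itemize}
\item Assumption \ref{assump_basic} holds with $L_C=L_\phi$ and $L_\Theta=\cO(L_\phi T/\underline{\lambda}+\tau)$.
\item The radius is $R_\Theta=\cO(T+T/\tau)$ and the dimension is $d_\Theta=2$.
\end{itemize}
Write $\cR^{\pi,\mathrm{tr}}$ and $\hat{\cR}^{\pi,\mathrm{tr}}_n$ for the risks computed over $\Theta_{\mathrm{tr}}$. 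Applying \eqref{approx_quality_lambda} under both $P$ and $\hat P_n$ gives
\[
|\hat{\cR}_n^\pi(s)-\cR^\pi(s)|\le 2\underline{\lambda}\tau+|\hat{\cR}_n^{\pi,\mathrm{tr}}(s)-\cR^{\pi,\mathrm{tr}}(s)|.
\]
The same inequality holds for the optimal risks. To see this, take the minimum over $\pi$ on both sides of \eqref{approx_quality_lambda}; this commutes because the order of the minima over $\theta$ and $\pi$ can be swapped.

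Now choose $\underline{\lambda}=\epsilon/(4\tau)$. This makes the truncation bias $2\underline{\lambda}\tau=\epsilon/2$ and $L_C=g_\phi^{-1}(16T\tau/\epsilon)$ (up to the constant in $B$). For evaluation, invoke Theorem \ref{thrm_sample_eval_soc_finite} with accuracy $\epsilon/2$. This yields
\[
n=\cO\rbr{T^2L_C^2\epsilon^{-2}\log(\cdot)}.
\]
The logarithm contains $[T^3L_CL_\pi L^{T+1}/\epsilon]^{d_\cS+1}R^{d_\cS}(L_\Theta R_\Theta/\epsilon)^{2}/\delta$. Expanding it gives $(d_\cS+1)\big[(T+1)\log L+\log(TL_CL_\pi/\epsilon)\big]$ plus lower-order terms. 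The $T\log L$ term is absorbed into the prefactor, producing $T^3 d_\cS$ in front. What remains inside the log is polynomial in $T,L_\pi,L,\tau,g_\phi^{-1}(16T\tau/\epsilon),1/\epsilon,1/\delta$, which matches the stated bound.

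For optimization, repeat the argument with Theorem \ref{thrm_sample_opt_soc_finite}. The $\log L^{T+1}$ factor, multiplied by $d_\cS+d_\cA+1$, turns the $T^3$ prefactor into $T^4(d_\cS+d_\cA)$. The remaining $\log\log$ and $d_\cA$ terms are absorbed into the logarithm.

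The main obstacle is bookkeeping rather than anything conceptual. One must check that substituting $L_\Theta\sim L_\phi T\tau/\epsilon$ and $R_\Theta$ leaves only polylogarithmic contributions. One must also check that the exponential-in-$T$ Lipschitz constant $L^{T+1}$ of the SOC value functions costs exactly one extra factor of $T$ once it is pulled out of the logarithm, and is not hidden in the $\cO$. That extra factor is why the exponent of $T$ increases by one compared with Theorem \ref{thrm_samp_comp_mdp_phi}.
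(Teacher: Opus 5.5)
Your proposal is correct and takes the route the paper intends. The paper gives no separate proof, only ``similarly to Theorem~\ref{thrm_samp_comp_mdp_phi}'': truncate to $\Theta_{\mathrm{tr}}$ with $\underline{\lambda}=\epsilon/(4\tau)$ via Lemma~\ref{lemma_lipschitz_wo_truncation}, apply Theorems~\ref{thrm_sample_eval_soc_finite} and \ref{thrm_sample_opt_soc_finite} at accuracy $\epsilon/2$, and pull $\log L^{T+1}$ out of the logarithm to get the extra factor of $T$. One small point: ``up to the constant in $B$'' is not harmless when $g_\phi^{-1}$ grows exponentially (e.g.\ KL). However, centering the total cost, which is allowed by translation equivariance of $\cR_{\Theta,f}$, gives $B=(T+1)/2\le T$, and monotonicity of $g_\phi^{-1}$ then yields exactly the stated $g_\phi^{-1}(16T\tau/\epsilon)$.
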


The specialization of Theorem \ref{thrm_samp_comp_soc_phi} to different concrete choices of $\phi$ largely follows the same computation as presented for MDPs in Section \ref{subsec_application_mdp}. 
Therefore  we omit their details for presentation simplicity. 
While sample complexity for risk-averse SOCs with nested risk measures has been recently investigated in \cite{shapiro2025risk}, Theorem \ref{thrm_samp_comp_soc_phi} seems to be the first result for risk-averse SOCs with static risk measures.

%% file: conclusion.tex

\section{Concluding Remarks}
To conclude our discussions in this manuscript, it is worth noting that the dynamic equations for risk-averse MDP (SOC) with static risk measures are by no means unique \cite{bauerle2014more, muni2026reward,bauerle2011markov}. 
A potential consequence of this is that in defining the notion of time-consistency \cite{shapiro2009time}, the corresponding tail-problems for which the policy is optimal with respect to depend on the dynamic equations one considers. 
In addition, while we mainly focus on dynamic programming equations and sample complexities in this manuscript, it could be also rewarding to develop  efficient solution methods for risk-averse MDPs and SOCs problems with static risk measures via the proposed DP equations.